\newcommand{\cK}{\mathcal{K}}     
\newcommand{\cZ}{\mathcal{Z}}    
\newcommand{\C}{\mathbb{C}}        
\newcommand{\R}{\mathbb{R}}        
\newcommand{\N}{\mathbb{N}}        
\newcommand{\otmin}{\otimes_{\mathrm{min}}}
\newcommand{\otmax}{\otimes_{\mathrm{max}}}
\newcommand{\otmininfty}{{\textstyle{\bigotimes_{\mathrm{min}}^\infty} \,}}
\newcommand{\otmaxinfty}{{\textstyle{\bigotimes_{\mathrm{max}}^\infty} \,}}
\newcommand{\Cs}{$C^*$-algebra}
\newcommand{\sh}{{$^*$-ho\-mo\-mor\-phism}}
\newcommand{\Ker}{\mathrm{Ker}}
\newcommand{\ep}{\varepsilon}
\newcommand{\Cu}{\mathrm{Cu}}
\newcommand{\nprecsim}{{\operatorname{\hskip3pt \precsim\hskip-9pt |\hskip6pt}}}
\newcommand{\wDiv}{{\mathrm{w\mbox{-}Div}}}
\newcommand{\Div}{{\mathrm{Div}}}
\newcommand{\Dec}{{\mathrm{Dec}}}
\newtheorem{thm}{Theorem}[section]
\newtheorem{cor}[thm]{Corollary}
\newtheorem{lemma}[thm]{Lemma}
\newtheorem{prop}[thm]{Proposition}
\theoremstyle{definition}
\newtheorem{definition}[thm]{Definition}
\newtheorem{ex}[thm]{Example}
\newtheorem{rem}[thm]{Remark}
\newtheorem{rems}[thm]{Remarks}
\newtheorem{ques}[thm]{Question}
\numberwithin{equation}{section}
\begin{document}

\title{When central sequence $C^*$-algebras have characters}
\author{Eberhard Kirchberg and  Mikael R{\o}rdam}
\date{November 18, 2014}

\thanks{The second named author was supported  by the Danish National Research Foundation (DNRF) through the Centre for Symmetry and Deformation at University of Copenhagen, and The Danish Council for Independent Research, Natural Sciences.}
%
\subjclass{Primary: 46L35}
%
%
%

\begin{abstract} We investigate \Cs s whose central sequence algebra has no characters, and we raise the question if such \Cs s necessarily must absorb the Jiang-Su algebra (provided that they also are separable). We relate this question to a question of Dadarlat and Toms if the Jiang-Su algebra always embeds into the infinite tensor power of any unital \Cs{} without characters. We show that absence of characters of the central sequence algebra implies that the \Cs{} has the so-called strong Corona Factorization Property, and we use this result to exhibit simple nuclear separable unital \Cs s whose central sequence algebra does admit a character. We show how stronger divisibility properties on the central sequence algebra imply stronger regularity properties of the underlying \Cs.
\end{abstract}

\maketitle



\section{Introduction}
\label{sec:intro}

\noindent Dusa McDuff proved in 1970 that the (von Neumann) central sequence algebra of a von Neumann II$_1$-factor is either abelian or a type II$_1$-von Neumann algebra. The latter holds if and only if the given II$_1$-factor tensorially absorbs the hyperfinite II$_1$-factor (as von Neumann algebras). Such II$_1$-factors are now called McDuff factors. Analogously, if $A$ is a separable unital \Cs{} and if $D$ is a separable unital \Cs{} for which the so-called "half-flip" is approximately inner, then $A$ is isomorphic to $A \otimes D$ if $D$ embeds unitally into the \Cs{} central sequence algebra $A_\omega \cap A'$, where $A_\omega$ denotes the (norm) ultrapower \Cs{} with respect to a given ultrafilter $\omega$ (see for example \cite[Theorem 7.2.2]{Ror:Encyc}.) If, moreover, $D$ is strongly self-absorbing, then $A \cong A \otimes D$ if \emph{and only if} $D$ embeds unitally into $A_\omega \cap A'$. We follow the notation of \cite{Kir:abel} and write $F(A)$ for the central sequence \Cs{} $A_\omega \cap A'$ (suppressing the choice of ultrafilter $\omega$, cf.\ Remark \ref{CH}).

The McDuff dichotomy for type II$_1$-von Neumann factors does not immediately carry over to (simple, unital, stably finite) \Cs s. The central sequence algebra in the world of \Cs s is more subtle. It is rarely abelian (cf.\ the recent paper by H.\ Ando and the first named author, \cite{HiKir:abelian}, where it is shown that the central sequence algebra is non-abelian whenever the original \Cs{} is not of type I); and when it is non-abelian it may not contain a unital copy of any unital simple \Cs{} other than $\C$. 

It was shown in \cite{KirPhi:classI} that if $A$ is a simple, unital, separable, nuclear, purely infinite \Cs, then $F(A)$ is simple and purely infinite. In particular, $\mathcal{O}_\infty$ embeds into $F(A)$, so $A$ is isomorphic to $A \otimes \mathcal{O}_\infty$. 

Significant progress in our understanding of the central sequence algebra in the stably finite case has recently been obtained by Matui and Sato in \cite{MatuiSato:Z} and \cite{MatuiSato:UHF}. A result by Sato, improved in \cite{KirRor:Central-sequence}, provides an epimomorphism from $F(A)$ onto the von Neumann central sequence algebra of the weak closure of $A$ with respect to any tracial state on $A$. In the case where $A$ is nuclear (and with no finite dimensional quotients) and the trace is extreme, we thus get an epimorphism from $F(A)$ onto the central sequence algebra of the hyperfinite II$_1$-factor, which is a type II$_1$-von Neumann algebra. Matui and Sato introduced a comparability property of the central sequence algebra, which they call (SI), and which, in the case of finitely many extremal traces, fascilitates liftings from this II$_1$-von Neumann algebra to $F(A)$ itself. They use this to show that $\cZ$-stability is equivlaent to strict comparison (of positive elements) for simple, unital, separable, nuclear \Cs s with finitely many extremal traces. 

The first named author observed in \cite{Kir:abel} that if $A$ is a separable unital \Cs, and $D$ is another separabel unital \Cs{} which via a \sh{} maps unitally into $F(A)$, then the infinite maximal tensor power $\otmaxinfty D$ likewise maps unitally into $F(A)$. This leads to the dichotomy that $F(A)$ either has a character or admits no finite-dimensional representations; and in the latter case there is a unital \sh{} from  the infinite maximal tensor power  of some separable unital \Cs{} $D$ without characters into $F(A)$. Dadarlat and Toms proved in \cite{DadToms:Z} that the Jiang-Su algebra $\cZ$ embeds unitally into $\otmininfty D$ (or into $\otmaxinfty D$) if and only if the latter contains a subhomogeneous \Cs{} without characters as a unital sub-\Cs. 

It is therefore natural to ask if the Jiang-Su algebra $\cZ$ embeds unitally into $F(A)$ if and only if $F(A)$ has no characters, whenever $A$ is a unital separable \Cs. As mentioned above, the former is equivalent to the isomorphism $A \cong A \otimes \cZ$. We remark that our question is equivalent to the question of Dadarlat and Toms if the Jiang-Su algebra always embeds into $\otmininfty D$, when $D$ is a unital \Cs{} without characters. 
We show, using results from \cite{RobRor:divisible},  that if $A$ is a unital separable \Cs{} for which $F(A)$ has no characters, then $A$ has the so-called strong Corona Factorization Property, and we use this to give examples of unital separable nuclear simple \Cs s $A$ for which $F(A)$ does have a character. 

We investigate stronger divisibility properties of the central sequence algebra, and show how they lead to stronger comparison  and divisibility properties of the given \Cs. We conclude by giving a necessary and sufficient divisibility condition on the central sequence algebra that it admits a unital embedding of the Jiang-Su algebra.


\section{Preliminaries and a question of Dadarlat--Toms}

\noindent We recall in this section some well-known results that have motivated this paper.  

For each unital \Cs{}  $D$ consider the minimal and maximal infinite tensor powers
$$
\otmininfty D = D \otmin D \otmin D \otmin\cdots, \qquad
\otmaxinfty D =D \otmax D \otmax D \otmax\cdots.
$$
If $D$ is nuclear or if there is no need to specify which tensor product is being used, then we may drop the subscripts "min" and "max". 

For each free ultrafilter $\omega$  on $\N$, let $D_\omega$ denote the quotient $\ell^\infty(D)/c_\omega(D)$, where $c_\omega(D)$ is the closed two-sided ideal in $\ell^\infty(D)$ consisting of all bounded sequences $\{d_n\}_{n=1}^\infty$ such that $\lim_{n \to \omega}\|d_n\| = 0$. We shall denote the \emph{central sequence algebra} $D_\omega \cap D'$ by $F_\omega(D)$, or just by $F(D)$ ignoring the choice of free ultrafilter $\omega$, cf.\ the remark below. 

\begin{rem} \label{CH}  It was shown by Ge and Hadwin in \cite{GeHadwin:CH} that the isomorphism class of $F_\omega(A)$ is independent of the choice of free ultrafilter $\omega$, when $A$ is a separable \Cs{} and assuming that the continuum hypothesis (CH) holds. Farah, Hart and Sherman proved in \cite{FaHaSh:model} that if (CH) fails and $A$ is unital and separable, then there are $2^{2^{\aleph_0}}$ pairwise non-isomorphic central sequence algebras $F_\omega(A)$ (for different choices of free ultrafilters $\omega$). 

In general, whether or not (CH) holds, if $A$ is any \Cs, $B$ is a (unital) separable \Cs{} and there is a (unital) injective \sh{} $B \to F_\omega(A)$ for some free ultrafilter $\omega$, then there is a (unital) injective \sh{} $B \to F_{\omega'}(A)$ for any other free ultrafilter $\omega'$. 

It follows that properties, such as the existence of a unital \sh{} $\cZ \to F_\omega(A)$ and absence of characters on $F_\omega(A)$, is independent of the choice of free ultrafilter $\omega$. (Use Proposition~\ref{prop:A(n,2)} and the oberservation above to see the latter.) 
\end{rem}

\noindent
We mention below a result by the first-named author and a corollary thereof. 

\begin{prop}[{\cite[Proposition 1.12]{Kir:abel}}] \label{prop:Abel}
Let $\omega$ be a free ultrafilter on $\N$, let $A$ be a unital separable \Cs, let $B$ be a unital separable sub-\Cs{} of $A_\omega$, and let $D$ be a unital separable sub-\Cs{} of $A_\omega \cap A'$. It follows that there exists a unital $^*$-monomorphism $D \to A_\omega \cap B'$.
\end{prop}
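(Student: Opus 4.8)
The plan is to prove this by a reindexing (diagonal sequence) argument, using crucially that $D$ commutes with $A$ in order to convert the requirement ``commute with $B$'' into countably many conditions, each satisfiable along $\omega$. First I would fix countable dense subsets $\{d_j\}_{j\ge1}$ of $D$, closed under the $\mathbb{Q}[i]$-$*$-algebra operations and with $d_1=1$, and $\{b_i\}_{i\ge1}$ of $B$, and choose representing sequences $(d_j^{(n)})_n$ and $(b_i^{(n)})_n$ in $\ell^\infty(A)$ for their images in $A_\omega$, taking $d_1^{(n)}=1_A$. The target map will send $d_j$ to the class $\tilde d_j$ of the reindexed sequence $(d_j^{(m(n))})_n$, for a single strictly increasing function $m\colon\N\to\N$ to be produced by diagonalization.

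The heart of the construction is the choice of $m(n)$. For fixed indices the following sets all lie in $\omega$: the sets where the chosen representative of a sum, product, adjoint or $\mathbb{Q}[i]$-scalar multiple agrees with the corresponding algebraic combination of representatives to within $1/n$ (because these identities hold in $A_\omega$); the set where $\big|\,\|d_j^{(m)}\|-\|d_j\|\,\big|<1/n$ (because the norm in $A_\omega$ is the ultralimit of the norms of any representative); and, for each fixed $n$, the set $\{m:\|[d_j^{(m)},b_i^{(n)}]\|<1/n\}$ (because $d_j$ commutes with the \emph{fixed} element $b_i^{(n)}\in A$, as $D\subseteq A_\omega\cap A'$). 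At stage $n$ only finitely many such sets are relevant, namely those with all indices $\le n$, so their intersection with $\{m\ge n\}$ is again in $\omega$, hence nonempty; I pick $m(n)$ in it.

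With this $m$ I would verify three things. First, for fixed $p,q$ and all $n\ge\max(p,q)$ the defect terms for $+,\cdot,*$ and scalars are $<1/n$, so they vanish in the ultralimit; thus $d_j\mapsto\tilde d_j$ is a $\mathbb{Q}[i]$-$*$-algebra homomorphism on the dense subalgebra. Second, $\|\tilde d_j\|=\lim_{n\to\omega}\|d_j^{(m(n))}\|=\|d_j\|$, so this map is isometric and therefore extends to an injective unital \sh{} $\varphi\colon D\to A_\omega$, with $\varphi(1)=1$ since $\tilde d_1=1$. Third, for fixed $i,p$ and all $n\ge\max(i,p)$ one has $\|[d_p^{(m(n))},b_i^{(n)}]\|<1/n$, whence $[\varphi(d_p),b_i]=0$ in $A_\omega$; as $\{b_i\}$ is dense in $B$, the image of $\varphi$ lies in $A_\omega\cap B'$. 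This produces the desired unital $^*$-monomorphism $D\to A_\omega\cap B'$.

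The main obstacle is that one cannot simply push a reindexing map down to $A_\omega$: a function $m\colon\N\to\N$ induces a well-defined endomorphism of $A_\omega$ only when $m_*\omega=\omega$, which there is no room to arrange simultaneously with the stagewise conditions. The point of the diagonal argument is precisely to sidestep this — rather than demanding that $\tilde d_j$ compute the ``right'' ultralimit for a global reason, one forces, at each finite stage and for finitely many data, closeness to the correct values, and then reads off the limiting identities along the cofinite (hence $\omega$-large) tails. The commutation step works only because $D\subseteq A'$ makes $[d_j^{(m)},b_i^{(n)}]\to0$ along $\omega$ for each fixed $n$; without centrality of $D$ relative to $A$ there would be no reason for these commutators to be small, and the argument would collapse.
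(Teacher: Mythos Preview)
The paper does not actually prove this proposition; it is quoted verbatim from \cite[Proposition~1.12]{Kir:abel} and used as a black box, so there is no in-paper proof to compare against.

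That said, your argument is correct and is the standard reindexing (diagonal) proof of this type of result. The crucial point is exactly the one you isolate: because $D\subseteq A_\omega\cap A'$, each $d_j$ commutes with every \emph{fixed} element of $A$, in particular with the individual coordinates $b_i^{(n)}\in A$, so the sets $\{m:\|[d_j^{(m)},b_i^{(n)}]\|<1/n\}$ lie in $\omega$ and can be fed into the diagonalization along with the finitely many algebraic and norm conditions at each stage. Your remark that one cannot simply push a map $m\colon\N\to\N$ down to an endomorphism of $A_\omega$ unless $m_*\omega=\omega$, and that the diagonal argument sidesteps this by working stagewise on finitely many data, is also exactly the right way to think about why the proof has the shape it does. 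One cosmetic point: you announce $m$ as strictly increasing but only enforce $m(n)\ge n$; to get strict monotonicity you would also intersect with $\{m>m(n-1)\}$, which is cofinite and hence in $\omega$. This is harmless, since nothing in the verification uses monotonicity---only that the stagewise estimates hold on a cofinite (hence $\omega$-large) set of $n$.
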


\begin{cor} \label{cor:Abel}
Whenever $A$ is a unital separable \Cs{} and $D$ is a unital separable sub-\Cs{} of $F(A)$, there is a unital \sh{} $\otmaxinfty \, D \to F(A)$.
\end{cor}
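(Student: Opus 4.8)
The plan is to manufacture a sequence of unital \sh s $\phi_1, \phi_2, \phi_3, \dots \colon D \to F(A)$ whose images pairwise commute, and then to invoke the universal property of the maximal tensor product: any family of unital \sh s into a common \Cs{} with mutually commuting ranges factors through the maximal tensor product. Concretely, once the $\phi_n$ are in hand, the unital \sh s $\Phi_N \colon \bigotimes_{\mathrm{max}}^N D \to F(A)$ determined by $\Phi_N(d_1 \ot \cdots \ot d_N) = \phi_1(d_1)\phi_2(d_2) \cdots \phi_N(d_N)$ are compatible with the connecting maps $x \mapsto x \ot 1$ of the inductive system defining $\otmaxinfty D$, where unitality of each $\phi_n$ is exactly what guarantees $\Phi_{N+1}(x \ot 1) = \Phi_N(x)$. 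They therefore assemble into a single unital \sh{} $\otmaxinfty D \to F(A)$, which is the desired map.

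I would construct the $\phi_n$ by induction, arranging at the $n$-th stage that the newly added copy commutes with everything built before it. Take $\phi_1$ to be the given unital inclusion $D \hookrightarrow F(A)$. Assuming $\phi_1, \dots, \phi_n$ have been found with mutually commuting images, set $E_n := C^*\big(\phi_1(D) \cup \cdots \cup \phi_n(D)\big) \subseteq F(A)$ and $B_n := C^*(A \cup E_n) \subseteq A_\omega$; both are unital and separable, since $A$ and $D$ are separable. Applying Proposition~\ref{prop:Abel} with $B_n$ in the role of $B$ and the original $D \subseteq A_\omega \cap A'$ in the role of $D$ produces a unital $^*$-monomorphism $\phi_{n+1} \colon D \to A_\omega \cap B_n'$.

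The two containments $A \subseteq B_n$ and $E_n \subseteq B_n$ then do the remaining work, via the elementary fact that enlarging a set shrinks its relative commutant in $A_\omega$. From $A \subseteq B_n$ we get $A_\omega \cap B_n' \subseteq A_\omega \cap A' = F(A)$, so $\phi_{n+1}$ indeed takes values in $F(A)$. From $E_n \subseteq B_n$ we get $A_\omega \cap B_n' \subseteq E_n'$, so $\phi_{n+1}(D)$ commutes with $E_n$, hence with each of $\phi_1(D), \dots, \phi_n(D)$. Combined with the inductive hypothesis this shows that $\phi_1(D), \dots, \phi_{n+1}(D)$ have mutually commuting images, which completes the induction.

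I do not anticipate a serious obstacle: the genuine content is supplied by Proposition~\ref{prop:Abel}, which is precisely the device that extracts a fresh copy of $D$ inside $F(A)$ commuting with any prescribed separable subalgebra. The one point that must be respected is the choice of the \emph{maximal} tensor product: the construction only ever controls that the ranges $\phi_n(D)$ commute, and it is exactly this commutation that the maximal tensor norm is designed to accommodate, whereas no comparable universal factorization is available for the minimal tensor product. (Note that injectivity of the $\phi_n$ plays no role; only their unitality and the pairwise commutation of their ranges enter the argument.)
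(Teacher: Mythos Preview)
Your proof is correct and follows essentially the same approach as the paper: build the $\phi_n$ inductively by applying Proposition~\ref{prop:Abel} with $B = C^*(A,\phi_1(D),\dots,\phi_{n-1}(D))$ to obtain a new copy of $D$ in $A_\omega \cap B' \subseteq F(A)$ commuting with the previous ones, then invoke the universal property of the maximal tensor product. The paper's version is terser and leaves the final assembly step implicit, but the argument is the same.
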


\begin{proof} Find inductively unital \sh s $\varphi_n \colon D \to F(A)$ with pairwise commuting images as follows. Let $\varphi_1$ be the inclusion mapping $D \to F(A)$. If $n \ge 2$, then use Proposition~\ref{prop:Abel} with $B = C^*(A,\varphi_1(D), \dots, \varphi_{n-1}(D))$ to find $\varphi_n \colon D \to F(A)$ with the desired properties. 
\end{proof}

\noindent Dadarlat and Toms proved the following result in \cite[Theorem 1.1]{DadToms:Z}:

\begin{thm}[Dadarlat--Toms] \label{thm:Dad-Toms}
The following conditions are equivalent for each unital separable \Cs{} $A$:
\begin{enumerate}
\item $\otmininfty A \cong \Big(\otmininfty A\Big) \otimes \cZ$. \vspace{.1cm}
\item There is a unital embedding of $\cZ$ into $\otmininfty A$. \vspace{.1cm}
\item $\otmininfty A$ contains unitally a  subhomogeneous \Cs{} without characters.
\end{enumerate}
\end{thm}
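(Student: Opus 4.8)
The plan is to run the cycle of implications (i) $\Rightarrow$ (ii) $\Rightarrow$ (iii) $\Rightarrow$ (i), with essentially all of the content residing in the last step. The implication (i) $\Rightarrow$ (ii) is immediate, since an isomorphism $\otmininfty A \cong (\otmininfty A) \otimes \cZ$ carries the unital copy $1 \otimes \cZ$ onto a unital copy of $\cZ$ inside $\otmininfty A$. For (ii) $\Rightarrow$ (iii) I would use that a prime dimension-drop algebra $Z_{p,q}$, with $p,q \ge 2$ coprime, embeds unitally into $\cZ$, being one of the building blocks of the Jiang--Su inductive limit (whose connecting maps are unital and injective). Every irreducible representation of $Z_{p,q}$ has dimension in $\{p,q,pq\}$, so $Z_{p,q}$ is subhomogeneous and has no characters; composing $Z_{p,q} \hookrightarrow \cZ \hookrightarrow \otmininfty A$ then exhibits the required unital subhomogeneous sub-\Cs{} without characters.

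The work is in (iii) $\Rightarrow$ (i). Write $D = \otmininfty A$ and let $B \subseteq D$ be a unital (separable) subhomogeneous sub-\Cs{} without characters. I would first transport $B$ into the central sequence algebra $F(D)$ using the tail structure of the infinite tensor product: for $m \ge 1$ let $s_m \colon D \to D$ be the unital isometric shift sending the $n$-th tensor factor to the $(n+m)$-th. For a fixed $b \in B$, the element $s_m(b)$ eventually commutes with any $x \in D$ that is supported on finitely many factors, and hence, approximating an arbitrary $x \in D$ by finitely supported elements, the bounded sequence $(s_m(b))_m$ is central. Thus $b \mapsto (s_m(b))_m$ defines a unital (injective) \sh{} $B \to F(D)$. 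Corollary~\ref{cor:Abel}, applied to the separable algebra $D$ and this unital separable copy of $B$ inside $F(D)$, then produces a unital \sh{} $\otmaxinfty B \to F(D)$; and since $B$ is subhomogeneous, hence nuclear, one has $\otmaxinfty B = \otmininfty B$.

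It remains only to embed $\cZ$ unitally into $\otmininfty B$: granting this, the composition $\cZ \hookrightarrow \otmininfty B \to F(D)$ is a unital \sh, and because $\cZ$ is strongly self-absorbing the criterion recalled in the introduction (see \cite[Theorem 7.2.2]{Ror:Encyc}) yields $D \cong D \otimes \cZ$, which is (i). This last embedding is the heart of the matter and the step I expect to be the main obstacle, as it is the only place where the hypotheses on $B$ are genuinely consumed. The approach would be to exploit that the absence of characters forces every irreducible representation of $B$ to have dimension at least $2$, so that in $B^{\otimes m}$ every irreducible representation has dimension at least $2^m$; working with the fibred finite-dimensional spectrum of the subhomogeneous algebra $B^{\otimes m}$, one should manufacture unital \sh s from dimension-drop algebras $Z_{p_k,q_k}$, with $p_k,q_k$ coprime and tending to infinity, into finite tensor powers of $B$. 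Producing such maps with the coprimality and growth that the no-character hypothesis must be made to guarantee is precisely the technical core; assembling them, with pairwise commuting images via the identification $\otmininfty B \cong \otmininfty(B^{\otimes m})$, into a unital embedding of the Jiang--Su limit $\cZ = \varinjlim Z_{p_k,q_k}$ would then complete the argument.
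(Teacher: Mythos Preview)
The paper does not prove this theorem; it is simply quoted from \cite[Theorem~1.1]{DadToms:Z}. So there is no ``paper's own proof'' to compare your argument against, and your task is really to reproduce the Dadarlat--Toms result from scratch.

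Your implications (i)~$\Rightarrow$~(ii)~$\Rightarrow$~(iii) are correct and standard. Your reduction in (iii)~$\Rightarrow$~(i) is also sound: the shift maps give a unital embedding $B \to F(D)$, Corollary~\ref{cor:Abel} then yields $\otmaxinfty B \to F(D)$, nuclearity of the subhomogeneous $B$ collapses max to min, and a unital embedding $\cZ \hookrightarrow \otmininfty B$ would indeed finish via \cite[Theorem~7.2.2]{Ror:Encyc}. (A shorter route to (ii) rather than (i) is available: once $\cZ \hookrightarrow \otmininfty B$, use $B \subseteq D$ and $\otmininfty D \cong D$ to get $\cZ \hookrightarrow D$ directly, bypassing $F(D)$ altogether.)

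The genuine gap is the step you yourself flag: producing a unital embedding $\cZ \hookrightarrow \otmininfty B$ for $B$ subhomogeneous without characters. This is not a lemma one can wave through; it \emph{is} the theorem, specialised to the subhomogeneous case, and it is exactly the technical core of \cite{DadToms:Z}. Your sketch --- that irreducible representations of $B^{\otimes m}$ have dimension at least $2^m$, and that one should build maps from dimension-drop algebras into finite tensor powers --- points in the right direction but omits the substance: one must actually construct, over the base space of $B^{\otimes m}$, compatible trivialisations or approximate matrix-unit structures with the coprimality and growth needed, and then make the resulting maps cohere into an inductive system converging to $\cZ$. None of this is carried out. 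As written, your proposal is a correct reduction followed by an unproved assertion of the main result.
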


\noindent The dimension drop \Cs s
$$I(n,m) := \big\{f \in C([0,1],M_n \otimes M_m) \mid f(0) \in M_n \otimes \C, \; f(1) \in \C \otimes M_m\big\},$$
are subhomogeneous \Cs s for all pairs of integers $n,m \ge 1$; and $I(n,m)$ has no characters when $n,m\ge 2$. It was shown by Jiang and Su that that $I(n,m)$ embeds unitally into their algebra $\cZ$ if and only if $n$ and $m$ are relatively prime, see \cite{JiangSu:Z}, in fact $\cZ$ is the inductive limit of such algebras. 


\begin{thm} \label{thm:Z-absorption}
The following conditions are equivalent for each unital separable \Cs{} $A$:
\begin{enumerate}
\item $A \cong A \otimes \cZ$. \vspace{.1cm}
\item There is a unital embedding of $\cZ$ into $F(A)$. \vspace{.1cm}
\item $F(A)$ contains unitally a separable subhomogeneous \Cs{} without characters. 
\end{enumerate}
\end{thm}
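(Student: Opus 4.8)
The plan is to split the equivalence into the two halves (i) $\Leftrightarrow$ (ii), handled by the theory of strongly self-absorbing \Cs s, and (ii) $\Leftrightarrow$ (iii), handled by combining Corollary~\ref{cor:Abel} with the Dadarlat--Toms theorem. For (i) $\Leftrightarrow$ (ii) I would simply invoke the fact, recalled in the introduction, that for a strongly self-absorbing \Cs{} $D$ one has $A \cong A \otimes D$ if and only if $D$ embeds unitally into $F(A)$; since $\cZ$ is strongly self-absorbing, this gives the equivalence directly with no new work.

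For the implication (ii) $\Rightarrow$ (iii) I would exhibit a concrete witness inside $\cZ$: the dimension drop algebra $I(2,3)$ is separable and subhomogeneous, has no characters (as $2,3 \ge 2$), and embeds unitally into $\cZ$ (as $\gcd(2,3)=1$). Composing a given unital embedding $\cZ \to F(A)$ with the inclusion $I(2,3) \hookrightarrow \cZ$ then produces a unital copy of a separable subhomogeneous \Cs{} without characters inside $F(A)$.

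The main content, and the step I expect to be the real obstacle, is (iii) $\Rightarrow$ (ii). Suppose $D \subseteq F(A)$ is a unital separable subhomogeneous \Cs{} without characters. By Corollary~\ref{cor:Abel} there is a unital \sh{} $\Phi \colon \otmaxinfty D \to F(A)$. The key observation is that, $D$ being subhomogeneous and hence nuclear, the maximal and minimal infinite tensor powers coincide, $\otmaxinfty D = \otmininfty D$; this is exactly what lets the max-tensor output of Corollary~\ref{cor:Abel} meet the min-tensor hypothesis of Theorem~\ref{thm:Dad-Toms}. Since $D$ sits unitally inside $\otmininfty D$ as a tensor factor, condition (iii) of the Dadarlat--Toms theorem holds for $D$ in place of $A$, so $\cZ$ embeds unitally into $\otmininfty D$. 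Composing $\cZ \hookrightarrow \otmininfty D = \otmaxinfty D \xrightarrow{\Phi} F(A)$ yields a unital \sh{} $\cZ \to F(A)$, which is injective because $\cZ$ is simple, establishing (ii). The crux is recognizing that Corollary~\ref{cor:Abel} and Dadarlat--Toms dovetail once nuclearity identifies the two tensor powers, after which simplicity of $\cZ$ upgrades the homomorphism to an embedding.
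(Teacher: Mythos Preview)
Your proof is correct and follows essentially the same route as the paper: (i)$\Leftrightarrow$(ii) by the strongly self-absorbing theory, (ii)$\Rightarrow$(iii) via $I(2,3)\subset\cZ$, and (iii)$\Rightarrow$(ii) by combining Corollary~\ref{cor:Abel} with nuclearity of $D$ and Theorem~\ref{thm:Dad-Toms}. Your added remarks (that subhomogeneity gives nuclearity so the two tensor powers agree, and that simplicity of $\cZ$ upgrades the resulting unital \sh{} to an embedding) make explicit points the paper leaves implicit.
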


\begin{proof}  It is well-known that (i) $\Leftrightarrow$ (ii) holds, see for example
\cite[Theorem 7.2.2]{Ror:Encyc}. 

(ii) $\Rightarrow$ (iii) follows from the fact that the  Jiang-Su algebra $\cZ$ contains the dimension drop \Cs{} $I(2,3)$. 

(iii) $\Rightarrow$ (ii). Let $D$ be a separable unital sub-\Cs{} of $F(A)$ which is subhomogenous and without characters. Then $D$ is nuclear, there is a unital \sh{} from $\bigotimes^\infty  D$ into $F(A)$ by Corollary~\ref{cor:Abel}, and there is a unital embedding of $\cZ$ into $\bigotimes^\infty  D$ by Theorem~\ref{thm:Dad-Toms}.
\end{proof}

\noindent Having proved Theorem \ref{thm:Dad-Toms} it was natural for Dadarlat and Toms to ask the following:

\begin{ques}[Dadarlat-Toms] \label{q3}
Let $D$ be a unital separable \Cs{} without characters. Does it follow that $\cZ$ embeds unitally into $\otmininfty D$?
\end{ques}

\noindent To provide an affirmative answer to Dadarlat and Toms' question, all we need to do is to embed unitally  into $\otmininfty D$ some subhomogeneous \Cs{} without characters, for example the dimension drop \Cs{} $I(2,3)$.

In Question~\ref{q3}  it is crucial that it is the same \Cs{} $D$ that is repeated infinitely many times, cf.\  the following theorem, \cite[Theorem 7.17]{RobRor:divisible},  by Robert and the first named author:

\begin{thm}[\cite{RobRor:divisible}]
There exist unital, simple, infinite dimensional, separable, nuclear \Cs s $D_1,D_2,D_3, \dots$ such that  $\cZ$ does not embed unitally into $\bigotimes_{n=1}^\infty \, D_n$. 
\end{thm}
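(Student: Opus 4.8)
The plan is to isolate a divisibility obstruction that a unital copy of $\cZ$ forces on the Cuntz semigroup, and then to engineer the building blocks $D_n$ so that this obstruction is violated by the infinite tensor product. First I would record the necessary condition. The Cuntz semigroup of $\cZ$ is $\Cu(\cZ) = \N \sqcup (0,\infty]$, and for each $m$ one finds a soft element of rank $1/m$ which, by strict comparison in $\cZ$, witnesses $m\langle x\rangle \le \langle 1\rangle \le (m+1)\langle x\rangle$; hence $\Div_m(\cZ) = m+1$. Since a unital \sh{} $\cZ \to A$ induces an ordered, additive, unit-preserving map $\Cu(\cZ) \to \Cu(A)$, functoriality transports this witness into $A$, so a unital embedding of $\cZ$ into $A$ forces $\Div_m(A) \le m+1$ for every $m$ (equivalently, $\sup_m \Div_m(A)/m = 1$, the "tight divisibility" enjoyed by $\cZ$ itself). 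It therefore suffices to construct the $D_n$ so that, for some fixed scale $m_0$, the infinite tensor product $B = \bigotimes_{n=1}^\infty D_n$ satisfies $\Div_{m_0}(B) \ge m_0 + 2$.

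The difficulty is that $\Div_m$ is monotone under tensoring: for any unital $C$ one has $\Div_m(A \ot C) \le \Div_m(A)$ by tensoring a witness with $1_C$. Consequently the defect cannot be created in the limit; it must already be present in, and survive across, every finite stage $D_1 \ot \cdots \ot D_N$. Two things must be arranged: each individual $D_n$ must fail tight $m_0$-divisibility, and---the genuine danger---no "entangled" element of a finite tensor product, built from several factors at once, may restore a tight $m_0$-divisor. Note also that a merely numerical (trace-based) deficiency is unavailable: on any dimension function $d_\tau$ a witness of rank $\approx 1/m$ already yields the numerical inequalities, so pushing $\Div_{m_0}$ strictly above $m_0+1$ requires a genuine perforation obstruction in $\Cu$ that is invisible to traces.

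Accordingly I would take each $D_n$ to be a simple, unital, projectionless, infinite-dimensional, nuclear AH algebra of Villadsen--Toms type, with a unique trace, whose unit carries a characteristic-class obstruction to tight $m_0$-divisibility: the class $\langle 1\rangle$ cannot be dominated by $(m_0+1)$ copies of any $x$ with $m_0\langle x\rangle \le \langle 1\rangle$, because the relevant vector bundles over the (high-dimensional) base spaces have non-vanishing Euler/characteristic classes. The dimension functions coming from the unique trace are multiplicative on tensor products and supply the bookkeeping to track the finite stages, while the topological obstruction provides the non-numerical gap forcing $\Div_{m_0}(D_1 \ot \cdots \ot D_N) \ge m_0 + 2$.

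The main obstacle is precisely this last persistence statement: showing that the characteristic-class obstruction survives all finite tensor products, i.e. that no combination of factors produces a tight $m_0$-divisor of the unit. This is where the base spaces and bundles must be chosen quantitatively, so that the topological obstructions accumulate over the products of the base spaces rather than cancel; I expect this to be the technical heart, relying on the Toms--Villadsen computations of comparison in AH algebras and their behaviour under tensor products. Granting it, continuity of the Cuntz functor under inductive limits transfers the bound to $B = \bigotimes_n D_n$, giving $\Div_{m_0}(B) \ge m_0 + 2 > m_0 + 1$, so by the necessary condition $\cZ$ does not embed unitally into $B$. Finally, each $D_n$, being simple and infinite-dimensional, has no characters, so the example also shows that the hypothesis in Question~\ref{q3} that a single algebra be repeated cannot be relaxed to allowing distinct characterless factors.
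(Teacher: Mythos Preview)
The paper does not itself prove this theorem; it is quoted from \cite[Theorem~7.17]{RobRor:divisible}, so there is no in-paper argument to compare against. Your outline is nonetheless faithful to the approach taken in that source: the obstruction used there is precisely a failure of the divisibility bound $\Div_m \le m+1$ that a unital copy of $\cZ$ would force, and the building blocks are Villadsen-type simple AH algebras whose Cuntz semigroups exhibit perforation coming from Euler-class obstructions over high-dimensional base spaces. One small inaccuracy: these algebras are \emph{not} projectionless---Villadsen's and Toms's constructions are rich in projections, and the obstructions are most naturally phrased in terms of vector bundles (equivalently, projections in $C(X)\otimes\cK$)---so drop that adjective.

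Your identification of the crux is exactly right: the entire content lies in showing that the obstruction survives all finite tensor products, and everything before that is routine. Two remarks on how this is actually handled. First, your appeal to ``continuity of the Cuntz functor'' for the passage $\Div_{m_0}(B_N)\ge c \Rightarrow \Div_{m_0}(B)\ge c$ is the right idea but not automatic: one needs compactness of $\langle 1_B\rangle$ together with way-below interpolation in the $\mathbf{Cu}$-inductive limit to pull a putative witness in $B$ back to some $B_N$; this is dealt with in \cite{RobRor:divisible}. Second, the persistence argument there does not fix a single obstruction level uniformly across all $D_n$; rather one lets the topological obstruction (equivalently, $\Div_2(D_n)$) \emph{grow} with $n$, fast enough to outrun the improvement in divisibility that each new tensor factor could provide. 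The quantitative control on $\Div$ under tensor products that you anticipate is supplied by the estimates in \cite[Section~6]{RobRor:divisible} combined with the explicit Villadsen data. Your plan is therefore sound and essentially the one carried out in the cited paper; the honest gap you flag is precisely where the work lies.
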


\noindent The \Cs s in the theorem above are in fact AH-algebras, so they each contain subhomogeneous \Cs s without characters. However, they do not all contain \emph{the same} subhomogeneous \Cs{} without characters. 

\begin{rem}[The Cuntz semigroup and comparison of positive elements] \label{rem:Cuntz}
We remind the reader of the following few facts about the Cuntz semigroup that will be used in this paper. If $a,b$ are positive elements in $A \otimes \cK$, then write $a \precsim b$ if there is a sequence $\{x_k\}$ in $A \otimes \cK$ such that $x_k^*bx_k \to a$. Write $a \approx b$ if $a \precsim b$ and $b \precsim a$, and write $a \sim b$ if $a = x^*x$ and $b = xx^*$ for some $x \in A \otimes \cK$. We say that two positive elements are \emph{equivalent} if the latter relation holds between them. The Cuntz semigroup, $\Cu(A)$, of $A$ is defined to be the set of $\approx$-equivalence classes $\langle a \rangle$, where $a$ is a positive element in $A \otimes \cK$. The Cuntz relation $\precsim$ induces an order relation $\le$ on $\Cu(A)$, and addition in $\Cu(A)$ is given by orthogonal sum. Finally, one writes $\langle a \rangle \ll \langle b \rangle$ if $a \precsim (b-\ep)_+$ for some $\ep>0$. 
\end{rem}

\section{Reformulations of the Dadarlat--Toms' question}

\noindent Prompted by Theorems~\ref{thm:Dad-Toms} and \ref{thm:Z-absorption} we ask the following:

\begin{ques} \label{q1}
Let $A$ be a unital separable \Cs. Does it follow that $A \cong A \otimes \cZ$ if and only if $F(A)$ has no characters?
\end{ques}

\noindent The "only if" part is trivially true, cf.\ Theorem~\ref{thm:Z-absorption}.
We show below that our Question~\ref{q1} very much is related to Question \ref{q3} of Dadarlat and Toms.

\begin{lemma} \label{lm:A-char}
If $A$ is a unital \Cs{} that admits a character, then $F(A)$ also admits a character.
\end{lemma}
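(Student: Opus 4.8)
The plan is to lift the character on $A$ to $\ell^\infty(A)$ by working componentwise and then passing to the ultralimit, before descending to $A_\omega$ and restricting to $F(A)$. Let $\chi \colon A \to \C$ be a character, i.e., a unital \sh. Applying $\chi$ in each coordinate and composing with the limit along $\omega$, I would define $\Phi \colon \ell^\infty(A) \to \C$ by $\Phi\big(\{a_n\}_{n=1}^\infty\big) = \lim_{n \to \omega} \chi(a_n)$. Since $\{\chi(a_n)\}$ is a bounded sequence of scalars (bounded by $\sup_n \|a_n\|$), this limit exists, and one checks directly that $\Phi$ is a unital \sh: linearity and $*$-compatibility are inherited coordinatewise from $\chi$, while multiplicativity follows from the fact that $\chi$ is multiplicative in each coordinate together with the multiplicativity of the ultralimit on bounded scalar sequences.

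Next I would verify that $\Phi$ annihilates the ideal $c_\omega(A)$. If $\{a_n\} \in c_\omega(A)$, then $\lim_{n \to \omega} \|a_n\| = 0$, and since $|\chi(a_n)| \le \|a_n\|$ for every $n$, we obtain $\Phi(\{a_n\}) = \lim_{n\to\omega}\chi(a_n) = 0$. Hence $\Phi$ factors through the quotient $A_\omega = \ell^\infty(A)/c_\omega(A)$, yielding a character $\overline{\Phi} \colon A_\omega \to \C$ with $\overline{\Phi}(1_{A_\omega}) = 1$.

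Finally, I would restrict $\overline{\Phi}$ to $F(A) = A_\omega \cap A'$. The unit $1_{A_\omega}$, being the class of the constant sequence $\{1_A\}$, is central and therefore lies in $F(A)$, so $F(A)$ is a unital sub-\Cs{} of $A_\omega$. The restriction $\overline{\Phi}|_{F(A)}$ is multiplicative and satisfies $\overline{\Phi}(1_{F(A)}) = 1$, so it is a nonzero \sh{} $F(A) \to \C$, that is, a character on $F(A)$.

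The construction is essentially formal, so I do not anticipate a serious obstacle; the only points requiring (minor) care are the multiplicativity of the ultralimit, which is what ensures that $\Phi$ is a genuine homomorphism rather than merely a state, and the observation that $F(A)$ contains the unit of $A_\omega$, which is what guarantees that the restricted functional remains nonzero.
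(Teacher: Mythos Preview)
Your proposal is correct and follows essentially the same approach as the paper: define a character on $A_\omega$ by $\pi_\omega(\{a_n\}) \mapsto \lim_{n\to\omega}\chi(a_n)$ and then restrict it to $F(A)$. You have simply spelled out in more detail the routine verifications (multiplicativity of the ultralimit, vanishing on $c_\omega(A)$, and nontriviality of the restriction) that the paper leaves implicit.
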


\begin{proof}  Suppose that $\rho$ is a character on $A$. Then
$$\rho_\omega(\pi_\omega(x)) = \lim_\omega \rho(x_n), \qquad x = (x_n)_{n=1}^\infty \in \ell^\infty(A),$$
defines a character on $A_\omega$ (where $\pi_\omega \colon \ell^\infty(A) \to A_\omega$ is the canonical quotient map). The restriction of $\rho_\omega$ to $F(A) \subseteq A_\omega$ is then a character on $F(A)$.
\end{proof}

\noindent The converse to Lemma \ref{lm:A-char} is of course false, see Remark~\ref{rem:CFP}. Characterizations of unital \Cs s without characters were given in \cite{RobRor:divisible} as well as in \cite{Kir:abel}. We shall here give yet another, but related, description of such \Cs s. For each integer $n \ge 1$ consider the universal unital \Cs:
$$A(n,2) := \Big\{ a_1, \dots, a_n, b_1, \dots, b_n \; \big| \; \sum_{k=1}^n a_k^*a_k = 1, \; b_j^*a_j = 0, \; b_j^*b_j = a_j^*a_j, \, j=1, \dots n\Big\}.$$
In a similar way one can  define unital \Cs s $A(n,k)$ for each integer $k \ge 2$, but we shall not need these algebras here. Observe that $A(1,2)$ is the Cuntz-Toeplitz algebra $\mathcal{T}_2$. 

\begin{prop} \label{prop:A(n,2)}
\mbox{}
\begin{enumerate}
\item The \Cs{} $A(n,2)$ is unital, separable and has no characters. \vspace{.1cm}
\item There is a unital \sh{} $A(n,2) \to A(m,2)$ whenever $n \ge m$. \vspace{.1cm}
\item If $A$ is a unital \Cs, then $A$ has no characters if and only if there is a unital \sh{} $A(n,2) \to A$ for some integer $n \ge 1$.
\end{enumerate}
\end{prop}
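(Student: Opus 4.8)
Parts (i) and (ii) are formal consequences of the universal presentation, while the real content of the proposition is the forward implication in (iii). For (i) I would first note that the relations are admissible: since $b_j^*b_j = a_j^*a_j$ and $\sum_k a_k^*a_k = 1$, each generator satisfies $\|a_j\|^2 = \|a_j^*a_j\| \le 1$ and $\|b_j\|^2 = \|b_j^*b_j\| \le 1$, so the universal \Cs{} $A(n,2)$ exists, is generated by $2n$ contractions (hence separable), and is unital via the relation $\sum_k a_k^*a_k = 1$. To see it is nonzero I would exhibit a concrete representation: taking $2n$ isometries $s_1,\dots,s_n,t_1,\dots,t_n$ with pairwise orthogonal ranges and setting $a_j = n^{-1/2}s_j$, $b_j = n^{-1/2}t_j$ verifies all relations with nonzero generators. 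For the absence of characters, suppose $\chi \colon A(n,2)\to\C$ were a character and put $\alpha_j=\chi(a_j)$, $\beta_j=\chi(b_j)$; the relations give $\overline{\beta_j}\alpha_j = 0$ and $|\beta_j|^2 = |\alpha_j|^2$, which force $\alpha_j = 0$ for every $j$, contradicting $\sum_j|\alpha_j|^2 = \chi(1) = 1$.

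\textbf{Part (ii).} By composing the maps $A(n,2)\to A(n-1,2)\to\cdots\to A(m,2)$ it suffices to produce a unital \sh{} $A(m+1,2)\to A(m,2)$, and for this I would invoke the universal property, exhibiting $m+1$ pairs in $A(m,2)$ satisfying the defining relations of $A(m+1,2)$. Writing $a_1,\dots,a_m,b_1,\dots,b_m$ for the generators of $A(m,2)$, I would keep the last $m-1$ pairs and split the first pair in two, taking the new pairs to be $(2^{-1/2}a_1,2^{-1/2}b_1)$, $(2^{-1/2}a_1,2^{-1/2}b_1)$, $(a_2,b_2),\dots,(a_m,b_m)$. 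Each pair $(a,b)$ among these satisfies $b^*a=0$ and $b^*b=a^*a$ because the original pairs do, and the sum of the $a^*a$-terms telescopes to $\tfrac12 a_1^*a_1 + \tfrac12 a_1^*a_1 + \sum_{k\ge2}a_k^*a_k = \sum_{k=1}^m a_k^*a_k = 1$. As there is no cross-relation between distinct pairs, repeating the first pair is harmless, and the universal property delivers the homomorphism.

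\textbf{Part (iii).} That a unital \sh{} $A(n,2)\to A$ (for some $n$) precludes characters is immediate from (i), since a character on $A$ would pull back to one on $A(n,2)$. For the converse I would translate the presentation into a statement about the unit. Given images $a_j,b_j$ and setting $d_j = a_j^*a_j$, the relations yield $\sum_j d_j = 1$ and, for each $j$, that $a_ja_j^*$ and $b_jb_j^*$ are orthogonal positive elements both Cuntz-equivalent to $d_j$; hence $2\langle d_j\rangle \le \langle 1\rangle$ and $\langle 1\rangle \le \sum_j\langle d_j\rangle$ in $\Cu(A)$, i.e. the unit is weakly $2$-divisible into $n$ pieces. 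Conversely, if one can find $d_1,\dots,d_n\in A_+$ with $\sum_j d_j = 1$ together with elements $b_j$ satisfying $b_j^*b_j = d_j$ and $d_jb_j = 0$, then putting $a_j = d_j^{1/2}$ recovers all defining relations (note that $d_jb_j=0$ forces $b_j^*d_j^{1/2}=0$, hence $b_j^*a_j = 0$), so by the universal property $A(n,2)$ maps unitally into $A$. Thus the forward direction of (iii) reduces to producing such a concrete weakly-$2$-divisible decomposition of $1$ whenever $A$ has no characters, and I would extract its existence from the characterizations of characterless unital \Cs s in \cite{RobRor:divisible}.

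\textbf{Main obstacle.} I expect the crux to be precisely this last reduction, and within it the passage from Cuntz comparison to the \emph{exact} operator relations. The characterization in \cite{RobRor:divisible} naturally produces positive elements $x_1,\dots,x_n$ with $2\langle x_j\rangle \le \langle 1\rangle$ and $\langle 1\rangle \le \sum_j\langle x_j\rangle$, whereas the presentation of $A(n,2)$ demands the honest equality $\sum_j a_j^*a_j = 1$ and the honest orthogonality $d_jb_j = 0$, not mere subequivalences. Bridging this gap is where the work lies: I would use that $1$ is a projection, cut the $x_j$ down by functions $(\,\cdot\,-\ep)_+$ to replace $\precsim$ by subequivalences implemented by genuine elements, and then perturb so that finitely many pieces sum \emph{exactly} to the unit while each retains an orthogonal double. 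Arranging all $n$ pieces to cooperate simultaneously is, I anticipate, the technical heart of the argument.
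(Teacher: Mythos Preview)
Your arguments for (i) and (ii) are correct. For (ii) the paper takes a simpler route: rather than splitting the first pair, it sends the surplus generators to zero, i.e.\ $a_j^{(n)}\mapsto a_j^{(m)}$ for $j\le m$ and $a_j^{(n)}\mapsto 0$ for $j>m$ (similarly for the $b_j$). The relations are then immediate, with no rescaling needed.

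For (iii), your reduction is sound, but the route you choose through the Cuntz-semigroup characterization $\wDiv_2(A)<\infty$ creates exactly the obstacle you describe, and you do not resolve it. The paper sidesteps it by invoking a \emph{different} item from the same result in \cite{RobRor:divisible} (Corollary~5.4~(iii)): a unital \Cs{} $A$ has no characters if and only if there exist finitely many \sh s $\psi_1,\dots,\psi_n\colon CM_2\to A$ whose images jointly are full in $A$. Fullness gives, after repeating the $\psi_j$'s and increasing $n$, elements $d_j\in A$ with the \emph{exact} equality
\[
1_A=\sum_{j=1}^n d_j^*\,\psi_j(\iota\otimes e_{11})\,d_j.
\]
Now set $a_j'=\psi_j(\iota^{1/2}\otimes e_{11})d_j$ and $b_j'=\psi_j(\iota^{1/2}\otimes e_{21})d_j$. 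The matrix-unit identities $e_{12}e_{11}=0$ and $e_{12}e_{21}=e_{11}$ in $M_2$ immediately yield $(b_j')^*a_j'=0$ and $(b_j')^*b_j'=(a_j')^*a_j'$, and the displayed equality is precisely $\sum_j (a_j')^*a_j'=1$. Thus the cone structure of $CM_2$ delivers the exact operator relations automatically; no passage from Cuntz subequivalence to honest orthogonality or from $\precsim$ to equality is needed. Your anticipated ``technical heart'' is an artifact of choosing the weak-divisibility characterization rather than the $CM_2$-map one.
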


\begin{proof} (i). The \Cs{} $A(n,2)$ is unital by definition, and separable because it is finitely generated. If $\rho$ is a character on $A(n,2)$, then 
$$1 = \rho\big(\sum_{k=1}^n a_k^*a_k\big) = \sum_{k=1}^n |\rho(a_k)|^2 = \sum_{k=1}^n |\rho(a_k)||\rho(b_k)| = \sum_{k=1}^n |\rho(b_k^*a_k)| = 0,$$
a contradiction.

(ii). By the universal properties of the \Cs s $A(n,2)$ and $A(m,2)$ it follows that there is a unital \sh{} $\varphi \colon A(n,2) \to A(m,2)$ given by  
$$\varphi\big(a_j^{(n)}\big) = \begin{cases} a_j^{(m)}, & j \le m,\\ 0, & j > m, \end{cases} \qquad 
\varphi\big(b_j^{(n)}\big) = \begin{cases} b_j^{(m)}, & j \le m,\\ 0, & j > m. \end{cases}$$

(iii). The "if" part follows from (i). Suppose that $A$ has no characters. It then follows from \cite[Corollary 5.4 (iii)]{RobRor:divisible} that there exist an integer $n \ge 1$ and \sh s $\psi_j \colon CM_2 \to A$, $j=1, \dots, n$, such that $\bigcup_{j=1}^n \psi_j(CM_2)$ is full in $A$. Here $CM_2 = C_0((0,1]) \otimes M_2$ is the cone over $M_2$. 

By repeating the $\psi_j$'s  (and increasing the number $n$), we can assume that there exist elements $d_1, \dots, d_n$ in $A$ such that 
$$1_A = \sum_{j=1}^n d_j^*\, \psi_j(\iota \otimes e_{11})\, d_j,$$
where $\iota \in C_0((0,1])$ denotes the (positive) function  $t \mapsto t$, and $e_{ij} \in M_2$, $i,j = 1,2$, are the standard matrix units. Put
$$a_j' = \psi_j(\iota^{1/2} \otimes e_{11})\, d_j, \qquad b_j' = \psi_j(\iota^{1/2} \otimes e_{21}) \, d_j, \quad j=1,\dots,n.$$
These elements are easily seen to satisfy the relations of the algebra $A(n,2)$. Hence there exists a unital \sh{} $A(n,2) \to A$ satisfying $a_j \mapsto a_j'$ and $b_j \mapsto b_j'$. 
\end{proof}

\begin{rem} \label{rem:cov-div} Let $A$ be a unital \Cs. The least number $n$ for which there is a unital \sh{} $A(n,2) \to A$ is related to the covering number $\mathrm{Cov}(A,2)$ from \cite{Kir:abel} as well as the weak divisibility number $\wDiv_2(A)$ from \cite{RobRor:divisible}. It is easy to see that $\wDiv_2(A) \le n$, and one can show that $n \le 3 \, \mathrm{Cov}(A,2)$. It was shown in \cite[Proposition 3.7]{RobRor:divisible} that $\mathrm{Cov}(A,2) \le \wDiv_2(A) \le 3 \,  \mathrm{Cov}(A,2)$. Combining these facts we get that
$$\mathrm{Cov}(A,2) \le \wDiv_2(A) \le n \le  3 \, \mathrm{Cov}(A,2).$$
\end{rem}

\begin{lemma} \label{lm:sep-characters}
Each unital \Cs{} without characters contains a unital \emph{separable} sub-\Cs{} without characters.
\end{lemma}

\begin{proof} This follows immediately from Proposition~\ref{prop:A(n,2)} (i) and (iii). 

One can also prove this claim directly as follows: Let $A$ be a unital \Cs{} without characters, and denote by $S(A)$ the weak-$^*$ compact set of its states. A state $\rho$ on $A$ is a character if and only $|\rho(u)|=1$ for all unitaries $u$ in $A$. Hence, $S(A)$ is covered by the family of open sets 
$$V_u := \{\rho \in S(A) : |\rho(u)| < 1\}, \qquad u \in U(A).$$
It follows that there exists a finite set $u_1,u_2, \dots, u_n$ of unitaries in $A$ such that $S(A)$ is covered by the corresponding open sets $V_{u_j}$, $ 1 \le j \le n$. Let $B$ be the separable unital sub-\Cs{} of $A$ generated by these unitaries. Then no state $\rho$ on $B$ can be a character, since its extension $\bar{\rho}$ to $A$ will belong to $V_{u_j}$ for some $j$, whence $|\rho(u_j)| = 
|\bar{\rho}(u_j)| < 1$. 
\end{proof}

\noindent Combining the results above with Corollary \ref{cor:Abel} we obtain the following dichotomy for the central sequence algebras:

\begin{prop} \label{prop:dichotomy}
Let $A$ be a unital separable \Cs. Then its central sequence algebra $F(A)$ either has a character or has no finite dimensional representation on a Hilbert space. In the latter case, there is a unital \sh{} from $\otmaxinfty A(n,2)$ into $F(A)$ for some $n \ge 1$. 
\end{prop}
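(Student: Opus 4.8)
The substantive content here is the dichotomy itself; once we know that $F(A)$ either has a character or no finite-dimensional representation, the embedding asserted in the latter case is a short deduction. The plan is to prove the dichotomy in the form that \emph{a nonzero finite-dimensional representation of $F(A)$ forces a character}. So suppose $\pi$ is a nonzero finite-dimensional representation of $F(A)$, and suppose for contradiction that $F(A)$ has no characters. Compressing by $\pi(1)$ and passing to the image, I may regard $\pi$ as a unital \sh{} onto a finite-dimensional \Cs{} $N$. A character on $N$ would pull back to a character on $F(A)$, so $N$ has none; hence $N \cong \bigoplus_i M_{l_i}$ with every $l_i \ge 2$, and composing with the projection onto one summand yields a unital surjection $\pi' \colon F(A) \to M_l$ with $l \ge 2$.

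Next I would manufacture a separable subalgebra adapted to $\pi'$. By Lemma~\ref{lm:sep-characters} there is a separable unital sub-\Cs{} of $F(A)$ without characters; enlarging it by finitely many elements whose images generate $M_l$ produces a separable unital sub-\Cs{} $D \subseteq F(A)$ with $\pi'(D) = M_l$, and $D$ still has no characters, since any character on $D$ would restrict to one on the character-free subalgebra it contains. I then apply Corollary~\ref{cor:Abel} to get a unital \sh{} $\Phi \colon \otmaxinfty D \to F(A)$; from the construction in the proof of that corollary the first tensor factor is mapped by the inclusion $D \hookrightarrow F(A)$, so the composite $D = D_1 \hookrightarrow \otmaxinfty D \xrightarrow{\Phi} F(A)$ is this inclusion.

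The crux is a commuting-images argument applied to $\rho := \pi' \circ \Phi \colon \otmaxinfty D \to M_l$. The restriction of $\rho$ to the first tensor factor is $\pi'|_D$, which is surjective onto $M_l$. Since the images of distinct tensor factors commute, the restriction of $\rho$ to the second factor lands in the relative commutant of $M_l$ inside $M_l$, namely the centre $\C \cdot 1$. Being unital, this restriction has the form $d \mapsto \chi(d)\,1$ for a character $\chi$ on $D$, contradicting that $D$ has no characters. This contradiction proves that a nonzero finite-dimensional representation of $F(A)$ must come with a character, which is the asserted dichotomy. For the final claim, in the latter case $F(A)$ has in particular no characters, so Lemma~\ref{lm:sep-characters} and Proposition~\ref{prop:A(n,2)}\,(iii) supply a separable unital character-free $D \subseteq F(A)$ together with a unital \sh{} $A(n,2) \to D$ for some $n$; functoriality of the infinite maximal tensor power turns this into a unital \sh{} $\otmaxinfty A(n,2) \to \otmaxinfty D$, and composing with $\Phi$ from Corollary~\ref{cor:Abel} yields the desired $\otmaxinfty A(n,2) \to F(A)$.

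I expect the main obstacle to be the commuting-images step, and specifically the need that \emph{some} tensor factor be carried \emph{onto} $M_l$: this is exactly why I extract the explicit inclusion of the first factor from the proof of Corollary~\ref{cor:Abel}, and why I take care that the enlargement of the character-free subalgebra to surject onto $M_l$ does not inadvertently create a character. The remaining points — that a finite-dimensional \Cs{} without characters is a direct sum of matrix algebras of size $\ge 2$, and that restrictions of characters to unital subalgebras are characters — are routine.
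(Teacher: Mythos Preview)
Your argument is correct, and the underlying mechanism---commuting images inside a finite-dimensional algebra forcing one factor to collapse to a character---is precisely what the paper's one-line deduction rests on. However, you work harder than necessary. The paper's intended route is: if $F(A)$ has no character, Proposition~\ref{prop:A(n,2)}(iii) gives a unital \sh{} $A(n,2)\to F(A)$, and Corollary~\ref{cor:Abel} (applied to the image of $A(n,2)$) upgrades this to a unital \sh{} $\otmaxinfty A(n,2)\to F(A)$. Any nonzero finite-dimensional representation of $F(A)$ would then yield a unital \sh{} from $\otmaxinfty A(n,2)$ into a finite-dimensional \Cs{} $B$. The images $N_1,N_2,\dots$ of the tensor factors are pairwise commuting unital subalgebras of $B$, and the increasing chain $C^*(N_1)\subseteq C^*(N_1,N_2)\subseteq\cdots$ must stabilise at some stage $m$; then $N_{m+1}$ lies in the centre of $C^*(N_1,\dots,N_m)$ and is therefore abelian, contradicting Proposition~\ref{prop:A(n,2)}(i). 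In particular there is no need to arrange that a specific factor surjects onto $M_l$, nor to reach into the proof of Corollary~\ref{cor:Abel} to identify the first-factor map with the inclusion. Your surjectivity set-up merely pins down \emph{which} factor collapses to scalars, whereas the stabilisation argument shows \emph{some} factor always will; the latter is shorter and uses only the statements of the cited results.
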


\noindent We shall consider the following stronger version  of Question~\ref{q1}: 

\begin{ques} \label{q2}
Let $D$ be a unital separable \Cs{} without characters. Does it follow that $\cZ$ embeds unitally into $\otmaxinfty D$?
\end{ques}

\noindent To decide if Question~\ref{q2} has an affirmative answer, by Dadarlat and Toms' Theorem~\ref{thm:Dad-Toms}, one needs to show that whenever $D$ is a unital \Cs{} without characters, then $\otmaxinfty D$ contains unitally a  subhomogeneous \Cs{} without characters.

\begin{ques} \label{q2'}
Does $\otmaxinfty A(n,2)$ contain unitally a subhomogeneous \Cs{} without characters for each integer $n \ge 1$?
\end{ques}

\begin{prop} Questions \ref{q1}, \ref{q2}, and \ref{q2'} are equivalent.
\end{prop}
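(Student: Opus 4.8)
The plan is to establish the cyclic chain of implications Question~\ref{q1} $\Rightarrow$ Question~\ref{q2} $\Rightarrow$ Question~\ref{q2'} $\Rightarrow$ Question~\ref{q1}, so that an affirmative answer to any one of them propagates to all three. Since the ``only if'' part of Question~\ref{q1} is already known to hold in general, in the step producing Question~\ref{q1} I only need to supply the nontrivial ``if'' direction, namely that absence of characters of $F(A)$ forces $A \cong A \otimes \cZ$.

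For Question~\ref{q1} $\Rightarrow$ Question~\ref{q2}, given a unital separable \Cs{} $D$ without characters I would test Question~\ref{q1} on the unital separable algebra $A := \otmaxinfty D$. The key construction is a shift embedding of $D$ into $F(A)$: for $d \in D$ let $\iota_n(d) \in A$ denote $d$ placed in the $n$-th tensor factor and $1$ elsewhere. Each $\iota_n$ is a unital \sh, and since any element supported on finitely many tensor factors commutes with $\iota_n(d)$ once $n$ is large, the sequence $(\iota_n(d))_n$ defines a unital \sh{} $\Phi \colon D \to A_\omega \cap A' = F(A)$; the only point to verify is centrality, which one checks on the dense subalgebra of finitely supported tensors. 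As $D$ has no characters, neither does $F(A)$, since any character of $F(A)$ composed with $\Phi$ would be a character of $D$. Question~\ref{q1} now gives $A \cong A \otimes \cZ$, and the canonical unital inclusion $z \mapsto 1 \otimes z$ then embeds $\cZ$ unitally into $A \otimes \cZ \cong \otmaxinfty D$, which is the content of Question~\ref{q2}. Note that this deduction uses only the trivial half of $\cZ$-absorption, not any tensor-power form of the Dadarlat--Toms theorem.

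For Question~\ref{q2} $\Rightarrow$ Question~\ref{q2'} I would simply apply Question~\ref{q2} to $A(n,2)$, which is unital, separable and without characters by Proposition~\ref{prop:A(n,2)}\,(i); this embeds $\cZ$ unitally into $\otmaxinfty A(n,2)$, and as the dimension drop algebra $I(2,3)$ sits unitally inside $\cZ$ and is subhomogeneous without characters, Question~\ref{q2'} follows. For Question~\ref{q2'} $\Rightarrow$ Question~\ref{q1} I start from a unital separable \Cs{} $A$ with $F(A)$ having no characters. Proposition~\ref{prop:dichotomy} then furnishes, for some $n$, a unital \sh{} $\Psi \colon \otmaxinfty A(n,2) \to F(A)$, while Question~\ref{q2'} supplies a unital subhomogeneous sub-\Cs{} $S$ of $\otmaxinfty A(n,2)$ without characters. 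Its image $\Psi(S)$ is a unital separable sub-\Cs{} of $F(A)$ that is again subhomogeneous, being a quotient of $S$, and again without characters, since a character of $\Psi(S)$ would pull back through the surjection $S \to \Psi(S)$. The implication (iii) $\Rightarrow$ (i) of Theorem~\ref{thm:Z-absorption} then yields $A \cong A \otimes \cZ$, which is the ``if'' direction of Question~\ref{q1}.

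I do not expect any analytically deep obstacle here; the proposition is essentially a matter of threading the already-established results together. The one place demanding genuine care is the shift construction in the first implication: one must confirm that the sequence $(\iota_n(d))_n$ genuinely lands in the relative commutant $F(A)$ and that the resulting map is unital, since it is precisely unitality that lets ``no characters'' transfer from $D$ to $F(A)$. Secondarily, one should keep careful track of the maximal tensor powers throughout, as these are what Corollary~\ref{cor:Abel} and Proposition~\ref{prop:dichotomy} produce, as well as of the separability hypotheses in Theorem~\ref{thm:Z-absorption}; arranging the cycle as above has the pleasant effect of bypassing the tensor-power version of the Dadarlat--Toms theorem altogether.
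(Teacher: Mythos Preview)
Your argument is correct and uses the same ingredients as the paper: the shift embedding of $D$ into $F(\otmaxinfty D)$ for Question~\ref{q1} $\Rightarrow$ Question~\ref{q2}, and the passage through $A(n,2)$ together with Theorem~\ref{thm:Z-absorption} for the return. The only organizational difference is that the paper shows Question~\ref{q2} $\Leftrightarrow$ Question~\ref{q2'} directly and then proves Question~\ref{q1} $\Leftrightarrow$ Question~\ref{q2}, whereas you run a single cycle Question~\ref{q1} $\Rightarrow$ Question~\ref{q2} $\Rightarrow$ Question~\ref{q2'} $\Rightarrow$ Question~\ref{q1}; this is a cosmetic rearrangement, not a different strategy.

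One remark on your closing comment: you have not in fact bypassed the Dadarlat--Toms theorem. Your step Question~\ref{q2'} $\Rightarrow$ Question~\ref{q1} invokes the implication (iii) $\Rightarrow$ (i) of Theorem~\ref{thm:Z-absorption}, whose proof (via (iii) $\Rightarrow$ (ii)) rests precisely on Theorem~\ref{thm:Dad-Toms}. So the Dadarlat--Toms result is still doing the essential work of upgrading ``subhomogeneous without characters'' to ``$\cZ$ embeds''; it is just hidden one layer deeper in the references.
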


\begin{proof} It follows from Proposition \ref{prop:A(n,2)} and the remarks above that Questions~\ref{q2} and \ref{q2'} are equivalent. 

Suppose that Question \ref{q1} has an affirmative answer, and let $D$ be a unital separable \Cs{} without characters. There is a unital embedding of $D$ into $F(\otmaxinfty D)$, whence $F(\otmaxinfty D)$ has no characters.  It therefore follows that $\otmaxinfty D$ tensorially absorbs $\cZ$. In particular, $\cZ$ embeds unitally into  $\otmaxinfty D$. Hence Question \ref{q2} has an affirmative answer.

Suppose that Question \ref{q2} has an affirmative answer. Suppose that $A$ is a unital \Cs{} and that $F(A)$ has no characters. Then, by Lemma~\ref{lm:sep-characters}, there is a separable unital \Cs{} $D$ without characters that embeds unitally into $F(A)$. It follows from Corollary~\ref{cor:Abel}, and the assumption that Question \ref{q2} has an affirmative answer, that there are unital \sh s $\cZ \to \otmaxinfty D \to F(A)$. Hence $A \cong A \otimes \cZ$ by Theorem~\ref{thm:Z-absorption}.
\end{proof}

\noindent We proceed to relate the (original) Question \ref{q3} of Dadarlat--Toms to Question \ref{q1}. 

We remind the reader of the definition of the ideal $J(A)$ of the limit algebra $A_\omega$ associated with a unital \Cs{} $A$ and a free ultrafilter $\omega$ on $\N$. For each $p \ge 1$ and for each $\tau \in T(A)$, define semi-norms on $A$ as follows:
$$\|a\|_{p,\tau} = \tau((a^*a)^{p/2})^{1/p}, \qquad \|a\|_p = \sup_{\tau \in T(A)} \|a\|_{p,\tau}, \qquad a \in A.$$
If $a = \pi_\omega(a_1,a_2, \dots) \in A_\omega$, where $\pi_\omega \colon \ell^\infty(A) \to A_\omega$ is the quotient mapping, then set
$$\|a\|_{p,\omega} =  \lim_{n \to \omega} \|a_n\|_p.$$
We often write  $\|a\|_p$ instead of   $\|a\|_{p,\omega}$.

Let $J(A)$ be the closed two-sided ideal of $A_\omega$ consisting of all $a \in A_\omega$ such that $\|a\|_2 = 0$ (or, equivalenty, such that $\|a\|_p = 0$ for some $p \ge 1$). Note that $J(A) = A_\omega$ if and only if $T(A) = \emptyset$.

For each sequence $\{\tau_n\}_{n=1}^\infty$ of tracial states on $A$ we can associate a tracial state $\tau$ on $A_\omega$ by 
$$\tau\big(\pi_\omega(a_1,a_2,a_3, \dots)\big) = \lim_{n \to \omega} \tau_n(a_n), \qquad \{a_n\}_{n=1}^\infty \in \ell^\infty(A).$$
Let $T_\omega(A)$ denote the set of tracial states on $A_\omega$ that arise in this way. In particular, each tracial state $\tau$ on $A$ extends to a tracial state $\tau$ on $A_\omega$ by applying the construction above to the constant sequence $\{\tau\}_{n=1}^\infty$. 
Thus $T(A) \subseteq T_\omega(A) \subseteq T(A_\omega)$.

For each $\tau \in T(A_\omega)$ and for each $p \ge 1$ define a semi-norm on $A_\omega$ by $\|a\|_{p,\tau} = \tau\big((a^*a)^{p/2}\big)^{1/p}$. Let $J_\tau(A)$ denote the closed two-sided ideal of $A_\omega$ consisting of all $a \in A_\omega$ such that $\tau(a^*a) = 0$ (or, equivalently, such that $\|a\|_{p,\tau} = 0$ for some/all $p \ge 1$). 

\begin{lemma} \label{lm:separating}
Let $A$ be a unital separable \Cs{} with $T(A) \ne \emptyset$, and let $\omega$ be a free ultrafilter on $\N$. Then:
\begin{enumerate}
\item $\|a\|_{p,\omega} = \sup_{\tau \in T_\omega(A)} \|a\|_{p, \tau}$ for all $a \in A_\omega$. \vspace{.1cm}
\item $\displaystyle{J(A) = \bigcap_{\tau \in T_\omega(A)} J_\tau(A).}$
\item $A_\omega/J(A)$ and $F(A)/(F(A) \cap J(A))$ have separating families of traces.
\end{enumerate}
\end{lemma}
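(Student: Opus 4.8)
The plan is to establish (i) first, since (ii) and (iii) then follow by soft arguments. The workhorse throughout is that $\pi_\omega$ commutes with continuous functional calculus: if $a = \pi_\omega(a_1,a_2,\dots)$, then $(a^*a)^{p/2} = \pi_\omega\big((a_1^*a_1)^{p/2},(a_2^*a_2)^{p/2},\dots\big)$, so for any $\tau \in T_\omega(A)$ with defining sequence $\{\tau_n\}$ one has $\|a\|_{p,\tau}^p = \tau\big((a^*a)^{p/2}\big) = \lim_{n\to\omega}\tau_n\big((a_n^*a_n)^{p/2}\big) = \lim_{n\to\omega}\|a_n\|_{p,\tau_n}^p$. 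For the inequality $\sup_\tau\|a\|_{p,\tau}\le\|a\|_{p,\omega}$ in (i), fix such a $\tau$; since $\|a_n\|_{p,\tau_n}\le\|a_n\|_p$ for every $n$, this identity yields $\|a\|_{p,\tau}\le\lim_{n\to\omega}\|a_n\|_p=\|a\|_{p,\omega}$, and I take the supremum over $\tau$.

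The reverse inequality carries the real content, and the key point is that the supremum defining $\|a_n\|_p$ is \emph{attained}. Because $A$ is unital, $T(A)$ is weak-$*$ compact, and $\tau\mapsto\|a_n\|_{p,\tau}=\tau\big((a_n^*a_n)^{p/2}\big)^{1/p}$ is weak-$*$ continuous (it is evaluation at a fixed element of $A$ followed by the continuous map $t\mapsto t^{1/p}$); hence I may choose $\tau_n\in T(A)$ with $\|a_n\|_{p,\tau_n}=\|a_n\|_p$. Letting $\tau\in T_\omega(A)$ be the trace associated to this maximizing sequence $\{\tau_n\}$, the displayed identity gives $\|a\|_{p,\tau}^p=\lim_{n\to\omega}\|a_n\|_p^p=\|a\|_{p,\omega}^p$. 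Thus the supremum in (i) is in fact a maximum and equals $\|a\|_{p,\omega}$.

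Part (ii) is then immediate from (i) with $p=2$: an element $a\in A_\omega$ lies in $J(A)$ iff $\|a\|_{2,\omega}=0$, iff $\sup_{\tau\in T_\omega(A)}\|a\|_{2,\tau}=0$, iff $\|a\|_{2,\tau}=0$ for every $\tau\in T_\omega(A)$, which is precisely membership in $\bigcap_\tau J_\tau(A)$.

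For (iii), I first observe that each $\tau\in T_\omega(A)$ annihilates $J(A)$: by (ii) we have $J(A)\subseteq J_\tau(A)$, and for $a\in J_\tau(A)$ the Cauchy--Schwarz inequality gives $|\tau(a)|^2\le\tau(a^*a)=0$. Hence every $\tau\in T_\omega(A)$ descends to a trace $\bar\tau$ on $A_\omega/J(A)$. To see that this family separates, take a nonzero positive $b\in A_\omega/J(A)$ and lift it to a positive $a\in A_\omega$; if $\bar\tau(b)=\tau(a)=\|a^{1/2}\|_{2,\tau}^2=0$ for all $\tau$, then (i) forces $\|a^{1/2}\|_{2,\omega}=0$, so $a^{1/2}\in J(A)$ and hence $a\in J(A)$, i.e.\ $b=0$; so some $\bar\tau$ is nonzero on $b$. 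Finally, the isomorphism $F(A)/(F(A)\cap J(A))\cong(F(A)+J(A))/J(A)$ realizes the left-hand side as a sub-\Cs{} of $A_\omega/J(A)$, and restricting the separating family just found to this subalgebra remains separating, since a nonzero positive element there is still nonzero and positive in $A_\omega/J(A)$. The only genuine obstacle is the attainment of the supremum in the hard direction of (i)---which is exactly where unitality of $A$ (compactness of $T(A)$) is used---everything else being bookkeeping with functional calculus, Cauchy--Schwarz, and the isomorphism theorems.
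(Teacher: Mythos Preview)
Your proof is correct and follows essentially the same route as the paper's own argument: establish (i) by computing $\|a\|_{p,\tau}$ along a representing sequence, use compactness of $T(A)$ to find a maximizing sequence of traces for the reverse inequality, and then read off (ii) and (iii) as consequences. Your write-up is simply more explicit than the paper's---you spell out why the supremum is attained and unpack the descent to the quotient and subalgebra in (iii)---but there is no difference in strategy.
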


\begin{proof}  
(i). Write $a = \pi_\omega(a_1,a_2, \dots)$ with $\{a_n\} \in \ell^\infty(A)$. Let $\tau \in T_\omega(A)$ and represent $\tau$ by a sequence $\{\tau_n\}$ of tracial states on $A$. Then
$$\|a\|_{p,\tau} = \lim_\omega \|a_n\|_{p,\tau_n}.$$
Since $\|a_n\|_{p,\tau_n} \le \|a_n\|_p$, it follows that $\|a\|_{p,\tau}  \le \lim_\omega \|a_n\|_p = \|a\|_{p,\omega}$. Conversely, given $a \in A_\omega$ as above, we can for each natural number $n$ choose a tracial state $\tau_n$ on $A$ such that $\|a_n\|_{p,\tau_n} = \|a_n\|_p$. Let $\tau \in T_\omega(A)$ be the trace on $A_\omega$ associated with this sequence. Then $\|a\|_{p,\tau} = \lim_\omega \|a_n\|_p = \|a\|_{p,\omega}$.

(ii) follows immediately from (i). It follows from (ii) that $T_\omega(A)$ is a separating family of traces for $A_\omega/J(A)$; and hence also for the subalgebra $F(A)/(F(A) \cap J(A))$.
\end{proof}

\begin{rem} Ozawa proved in \cite{Ozawa:Dixmier} that $T_\omega(A)$ is weak$^*$ dense in $T(A_\omega)$ if $A$ is exact and $\cZ$-stable. For such \Cs s $A$ we get that
$$J(A) = \bigcap_{\tau \in T_\omega(A)} J_\tau(A)=\bigcap_{\tau \in T(A_\omega)} J_\tau(A),$$
and $\|a\|_{p,\omega} = \sup_{\tau \in T(A_\omega)} \|a\|_{p,\tau}$ for all $a \in A_\omega$. 
In particular, $J(A)$ is the smallest ideal in $A_\omega$ for which $A_\omega/J(A)$ has a separating family of traces. 

It is an easy consequence of  \cite[Theorem 1.4]{Robert:commutators} by Roberts that there exists a simple unital AH-algebra $A$ such that $T_\omega(A)$ is not weak$^*$ dense in $T(A_\omega)$. In other words, $A_\omega$ can have exotic traces that do not come from $A$ (not even close). 
\end{rem}

\noindent Fix a faithful tracial state $\tau$ on a \Cs{} $A$. 
Let $M$ be the type II$_1$-von Neumann algebra $\pi_\tau(A)''$, and let $M^\omega$ be the von Neumann central sequence algebra $\ell^\infty(M)/c_{\tau,\omega}(M)$, where $c_{\tau,\omega}(M)$ is the closed two-sided ideal in $\ell^\infty(M)$ consisting of all bounded sequences $\{a_n\}_{n=1}^\infty$ from $M$ such that $\lim_{n \to \omega} \|a_n\|_{\tau,2} = 0$. Then
$$A_\omega /J_\tau(A) \cong M^\omega, \qquad F(A)/(J_\tau(A) \cap F(A)) \cong M^\omega \cap M',$$
cf.\ \cite[Theorem 3.3]{KirRor:Central-sequence}.

The following result is well-known to experts. We include a sketch of the proof for the convenience of the reader.

\begin{prop} \label{prop:II_1-max-min}
Let $M$ be a type II$_1$-factor and let $N_1, N_2 \subseteq M$ be commuting sub-von Neumann algebras such that $(N_1 \cup N_2)'' = M$. It follows that the natural \sh{} 
$$N_1 \odot N_2 \to M, \qquad x \otimes y \mapsto xy, \qquad x \in N_1, \; y \in N_2,$$ 
extends to a \sh{} $N_1 \otmin N_2\to M$.
\end{prop}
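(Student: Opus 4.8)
The plan is to prove that the canonical algebraic map $N_1 \odot N_2 \to M$ is min-bounded, i.e.\ that it factors through the minimal (spatial) tensor product. Since a type II$_1$-factor $M$ is finite with a faithful normal tracial state $\tau$, the natural strategy is to exploit the trace. First I would recall that for a finite von Neumann algebra, the trace $\tau$ restricts to faithful normal tracial states on the commuting subfactors (or subalgebras) $N_1$ and $N_2$, and that the GNS representation of $M$ associated with $\tau$ is the standard form, acting on $L^2(M,\tau)$.

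\smallskip

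\noindent The key observation is that because $N_1$ and $N_2$ commute and $\tau$ is a trace, the restriction of $\tau$ to the algebraic tensor product is \emph{multiplicative} on simple tensors: $\tau(xy) = \tau_1(x)\tau_2(y)$ for $x \in N_1$, $y \in N_2$, where $\tau_i = \tau|_{N_i}$. This is a standard fact for commuting subalgebras of a tracial von Neumann algebra and follows from the trace property together with a bicommutant/density argument. Consequently the state on $N_1 \odot N_2$ obtained by pulling back $\tau$ along the multiplication map agrees with the product tracial state $\tau_1 \otimes \tau_2$. The product of two tracial states on a pair of \Cs s is known to be a state on the \emph{minimal} tensor product $N_1 \otmin N_2$ (this is where nuclearity-type behaviour of tracial product states enters, and is the one genuinely nontrivial input). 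Equivalently, one can build the GNS representation of $\tau_1 \otimes \tau_2$ spatially: it acts on $L^2(N_1,\tau_1) \otimes L^2(N_2,\tau_2)$, and hence is a min-continuous representation of $N_1 \odot N_2$.

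\smallskip

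\noindent With this in hand I would finish as follows. The GNS representation $\pi_\tau$ of $M$ restricted to the image of $N_1 \odot N_2$ is, via the computation of $\tau$ above, unitarily equivalent to the spatial product representation on $L^2(N_1) \otimes L^2(N_2)$. Since the product representation extends continuously to $N_1 \otmin N_2$ and since $\pi_\tau$ is faithful on $M$ (as $\tau$ is faithful), it follows that the norm of an element of $N_1 \odot N_2$ under the multiplication map into $M$ coincides with (or is dominated by) its norm in $N_1 \otmin N_2$. Therefore the algebraic map is min-bounded and extends to a \sh{} $N_1 \otmin N_2 \to M$, as claimed. One should take a moment to confirm that the image being weak-$^*$ dense, $(N_1 \cup N_2)'' = M$, is not needed for boundedness of the extension but only guarantees the extension is surjective onto $M$; the statement only asserts the extension exists, so this point can be mentioned and set aside.

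\smallskip

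\noindent The main obstacle I anticipate is justifying cleanly that the product tracial state $\tau_1 \otimes \tau_2$ is well-defined and continuous on the minimal tensor product, and identifying the GNS space of $\tau$ with the Hilbert space tensor product in a way that makes the min-norm bound transparent; this is the crux, because it is precisely the statement that for commuting images of $N_1,N_2$ in a tracial von Neumann algebra the trace ``sees only the minimal cross-norm.'' Everything else (faithfulness of $\pi_\tau$, the multiplicativity of $\tau$ on commuting factors) is routine once this tracial product-state fact is invoked.
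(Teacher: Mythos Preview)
Your approach is essentially the paper's: both arguments identify $L^2(M,\tau)$ with $L^2(N_1,\tau)\otimes L^2(N_2,\tau)$ and then use that $N_1\otmin N_2$ acts (by definition of the spatial norm) on the latter Hilbert space. The paper packages this as a unitary $U\colon L^2(N_1)\otimes L^2(N_2)\to L^2(M)$ and composes the spatial representation with $\mathrm{Ad}\,U$; you package it as recognizing the pulled-back trace as the product state $\tau_1\otimes\tau_2$ and comparing GNS representations. These are the same proof.

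There is, however, one point that needs correcting. The identity $\tau(xy)=\tau(x)\tau(y)$ for $x\in N_1$, $y\in N_2$ is \emph{not} a ``standard fact for commuting subalgebras of a tracial von Neumann algebra'': take for instance $N_1=N_2=A$ a diffuse abelian subalgebra of $M$, where $\tau(a^2)\neq\tau(a)^2$ in general. What makes it true here is that $N_1$ (and $N_2$) are \emph{factors}: any $z\in Z(N_1)$ commutes with $N_1$ and, since $N_1\subseteq N_2'$, also with $N_2$, hence with $(N_1\cup N_2)''=M$, so $z\in Z(M)=\C$. Once $N_1$ is a factor, the $\tau$-preserving conditional expectation $E_{N_1}\colon M\to N_1$ sends each $y\in N_2$ to an element commuting with all of $N_1$, hence to the scalar $\tau(y)1$, and then $\tau(xy)=\tau(xE_{N_1}(y))=\tau(x)\tau(y)$. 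This is exactly where the hypothesis $(N_1\cup N_2)''=M$ enters, so your concluding remark that it is ``not needed for boundedness of the extension'' is mistaken: it is precisely what furnishes the multiplicativity, and hence the isometry $L^2(N_1)\otimes L^2(N_2)\to L^2(M)$, on which both your argument and the paper's depend. (Surjectivity of that isometry, i.e.\ that it is a unitary, also uses the generating hypothesis, via density of $N_1N_2$ in $L^2(M)$.)
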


\noindent Note that $N_1$ and $N_2$ necessarily are factors. 

\begin{proof} Let $\tau$ denote the tracial state on $M$. We may assume that $M$ acts on the Hilbert space $L^2(M,\tau)$. The map $x \otimes y \mapsto xy$, where $x \in N_1 \subseteq L^2(N_1,\tau)$, $y \in N_2 \subseteq L^2(N_2,\tau)$, and $xy \in M \subseteq L^2(M,\tau)$, extends to a unitary operator 
$$U \colon L^2(N_1,\tau) \otimes L^2(N_2,\tau) \to L^2(M,\tau).$$
The \sh{}
$$\xymatrix{N_1 \otmin N_2  \ar[r] &  B\big(L^2(N_1,\tau) \otimes L^2(N_2,\tau)\big) \ar[r]^-{\mathrm{Ad} \, U}  &B(L^2(M,\tau))}$$
is then the desired extension of the natural \sh{} $N_1 \odot N_2 \to M$.
\end{proof}

\begin{prop} \label{prop:C*-min-max}
Let $A$, $B$, and $D$ be unital \Cs{} and let $\varphi_0 \colon A \odot B \to D$ be a unital \sh.  Suppose that $D$ has a separating family of tracial states. Then $\varphi_0$ extends to a unital \sh{} $\varphi \colon A \otmin B \to D$. 
\end{prop}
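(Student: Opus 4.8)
The plan is to reduce the general statement to the von Neumann algebra case handled by Proposition~\ref{prop:II_1-max-min}. Since $D$ has a separating family of tracial states $\{\tau_i\}_{i \in I}$, it suffices to produce, for each $i$, a bound on $\|\varphi_0(z)\|$ in terms of the minimal tensor norm that is uniform enough to extend $\varphi_0$ continuously. Concretely, I would first observe that proving $\varphi_0$ extends to $A \otmin B$ is equivalent to showing that $\|\varphi_0(z)\| \le \|z\|_{\min}$ for every $z \in A \odot B$; the reverse inequality is automatic since the minimal norm is the smallest \cst-norm. So the whole task is the estimate $\|\varphi_0(z)\| \le \|z\|_{\min}$.

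For each tracial state $\tau_i$ on $D$, let $\pi_i = \pi_{\tau_i}$ be the GNS representation with von Neumann algebra $M_i = \pi_i(D)''$, which carries a faithful normal trace. The composite $\pi_i \circ \varphi_0 \colon A \odot B \to M_i$ is a unital \sh{} whose restrictions to $A$ and $B$ give commuting sub-von Neumann algebras $N_1^{(i)} := (\pi_i\varphi_0(A))''$ and $N_2^{(i)} := (\pi_i\varphi_0(B))''$ inside $M_i$. I would then apply Proposition~\ref{prop:II_1-max-min} (after cutting down by the central projection onto the II$_1$ part, or more safely passing to the factor decomposition — see the obstacle below) to conclude that $\pi_i \circ \varphi_0$ extends to a \sh{} on $N_1^{(i)} \otmin N_2^{(i)}$, hence in particular $\|\pi_i(\varphi_0(z))\| \le \|z\|_{\min}$ for all $z \in A \odot B$, using that the inclusions $A \to N_1^{(i)}$ and $B \to N_2^{(i)}$ do not increase the minimal norm.

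Finally I would combine these estimates over all $i$. Because $\{\tau_i\}$ is separating, the direct sum $\bigoplus_i \pi_i$ is a faithful representation of $D$, so $\|\varphi_0(z)\| = \sup_i \|\pi_i(\varphi_0(z))\| \le \|z\|_{\min}$. This yields the required norm bound, and $\varphi_0$ extends by continuity to the desired unital \sh{} $\varphi \colon A \otmin B \to D$, which remains unital and multiplicative because $A \odot B$ is dense in $A \otmin B$.

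The main obstacle is that Proposition~\ref{prop:II_1-max-min} is stated for a type II$_1$-\emph{factor} $M$ with commuting subalgebras generating it, whereas here $M_i$ need be neither a factor nor of type II$_1$, nor need $N_1^{(i)}$ and $N_2^{(i)}$ generate it. I would handle this by noting that the argument in Proposition~\ref{prop:II_1-max-min} only uses the existence of a faithful normal trace on $M_i$ together with the $L^2$-independence coming from the GNS construction: the map $x \otimes y \mapsto \pi_i(\varphi_0(x))\pi_i(\varphi_0(y))$ still induces a well-defined contraction $L^2(N_1^{(i)}) \otimes L^2(N_2^{(i)}) \to L^2(M_i)$ via the trace, and conjugation by the associated partial isometry produces the bounded extension on $N_1^{(i)} \otmin N_2^{(i)}$; factoriality and the generating hypothesis are not actually needed for this direction of the estimate. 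Making this adaptation precise — in particular checking that the tracial inner product really gives an isometry rather than merely a contraction, so that the norm estimate goes through — is the step requiring the most care.
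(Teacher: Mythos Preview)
Your overall strategy is the same as the paper's --- push $\varphi_0$ through GNS representations coming from traces on $D$ and invoke Proposition~\ref{prop:II_1-max-min} --- but the specific adaptation you propose for the non-factor case does not work. The map $x \otimes y \mapsto xy$ from $L^2(N_1^{(i)},\tau_i) \otimes L^2(N_2^{(i)},\tau_i)$ to $L^2(M_i,\tau_i)$ is an isometry precisely when $\tau_i(ab) = \tau_i(a)\tau_i(b)$ for $a \in N_1^{(i)}$, $b \in N_2^{(i)}$; without this it need not even be bounded. (Take $M_i = N_1^{(i)} = N_2^{(i)} = L^\infty[0,1]$ with Lebesgue trace: pointwise multiplication $L^2 \otimes L^2 \to L^2$ is unbounded.) So you cannot simply drop the factoriality hypothesis from Proposition~\ref{prop:II_1-max-min} and run the same $L^2$ calculation.

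The paper sidesteps this exactly at the point you flagged: rather than working with an arbitrary trace in the separating family, it first replaces $D$ by the image of the max-tensor extension $\overline{\varphi}$, then supposes for contradiction that some positive $x$ in the kernel of $A \otmax B \to A \otmin B$ has $\overline{\varphi}(x) \ne 0$, and chooses an \emph{extremal} trace $\tau$ on $D$ with $\tau(\overline{\varphi}(x)) \ne 0$ (Krein--Milman). Extremality forces $\pi_\tau(D)''$ to be a factor, and in a factor the required trace factorization $\tau(ab)=\tau(a)\tau(b)$ for commuting subfactors does hold, so Proposition~\ref{prop:II_1-max-min} applies directly and yields the contradiction. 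If you insert this reduction to extremal traces, your argument goes through; without it, the ``most care'' step you identified is in fact the place where the proof breaks.
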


\begin{proof} The \sh{} $\varphi_0$ extends to a unital \sh{} $\overline{\varphi} \colon A \otmax B \to D$. Upon replacing $D$ by the image of $\overline{\varphi}$ we may assume that $\overline{\varphi}$ is surjective. 

Let $I$ denote the kernel of the natural \sh{} $A \otmax B \to A\otmin B$. We must show that $\overline{\varphi}$ is zero on $I$. Suppose, to reach a contradiction, that $\overline{\varphi}(x) \ne 0$ for some $x \in I$ (that we can take to be positive). Then $\tau(\overline{\varphi}(x)) \ne 0$ for some trace $\tau$ on $D$; and hence also for some extremal trace $\tau$ on $D$. 

Let $(\pi_\tau,H_\tau)$ denote the GNS-representation of $D$ with respect to the trace $\tau$. Then $M = \pi_\tau(D)''$ is a II$_1$-factor (because $\tau$ is extremal). Put 
$$N_1 = (\pi_\tau \circ \overline{\varphi})(A \otimes 1_B)'', \qquad N_2 = (\pi_\tau \circ \overline{\varphi})(1_A \otimes B)''.$$
Then $N_1$ and $N_2$ are commuting sub-von Neumann algebras of $M$, and $M = (N_1 \cup N_2)''$. It follows from Proposition \ref{prop:II_1-max-min} that there is a unital \sh{} $\rho$ making the diagram 
$$\xymatrix{A \otmax B \ar[rr]^{\overline{\varphi}} \ar[d] && D \ar[d]^{\pi_\tau} & \\ A \otmin B \ar[r] &
 N_1 \otmin N_2 \ar[r]_-{\rho} & M}$$
commutative. This, however, leads to a contradiction, because commutativity of the diagram entails that $\pi_\tau \circ \overline{\varphi}$ is zero on $I$, whereas $\tau_M(\pi_\tau(\overline{\varphi}(x))) = \tau(\overline{\varphi}(x)) \ne 0$.
\end{proof}

\begin{prop} \label{prop:C*-min-max-inf}
Let $A_1,A_2, A_3, \dots$ be a sequence of unital \Cs s, let $D$ be a unital \Cs{} which has a separating family of tracial states, and let $\varphi_0 \colon \bigodot_{n \in \N} A_n \to D$ be a \sh. Then $\varphi_0$ extends to a unital \sh{} 
$$\varphi \colon \textstyle{\bigotimes^{\mathrm{min}}_{n \in \N}}  \, A_n \to D.$$
\end{prop}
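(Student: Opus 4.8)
The plan is to reduce the assertion to the finite-factor statement of Proposition~\ref{prop:C*-min-max} by exploiting the inductive-limit structure of the minimal infinite tensor product, and then to obtain that finite statement by induction on the number of factors. Throughout I take $\varphi_0$ to be unital (as is needed for $\varphi$ to be unital), so that each restriction $\varphi_0|_{A_1 \odot \cdots \odot A_n}$ is a unital \sh.

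First, recall that $\otmininfty A_n$ is the completion of the increasing union $\bigcup_n (A_1 \otmin \cdots \otmin A_n)$, where $A_1 \otmin \cdots \otmin A_n$ is embedded unitally into $A_1 \otmin \cdots \otmin A_{n+1}$ via $x \mapsto x \otimes 1_{A_{n+1}}$, and that $\bigodot_{n \in \N} A_n = \bigcup_n (A_1 \odot \cdots \odot A_n)$ is a dense $^*$-subalgebra. Hence it suffices to produce, for each $n$, a unital \sh{} $\varphi_n \colon A_1 \otmin \cdots \otmin A_n \to D$ extending $\varphi_0|_{A_1 \odot \cdots \odot A_n}$. Two such maps $\varphi_n$ and $\varphi_{n+1}$ then automatically agree on $A_1 \otmin \cdots \otmin A_n$, since they coincide on the dense subalgebra $A_1 \odot \cdots \odot A_n$ and are continuous; thus the $\varphi_n$ form a compatible family, their common restriction to $\bigcup_n (A_1 \otmin \cdots \otmin A_n)$ is a contractive \sh, and this extends by continuity to the sought \sh{} $\varphi$ on $\otmininfty A_n$, extending $\varphi_0$ by construction.

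The core of the argument is therefore the finite statement: for $D$ with a separating family of tracial states, every unital \sh{} $\psi \colon A_1 \odot \cdots \odot A_n \to D$ extends to $A_1 \otmin \cdots \otmin A_n \to D$. I would prove this by induction on $n$, the case $n = 2$ being exactly Proposition~\ref{prop:C*-min-max} (and $n=1$ being trivial). For the inductive step, regard $\psi$ as defined on $(A_1 \odot \cdots \odot A_{n-1}) \odot A_n$. By the inductive hypothesis, the restriction of $\psi$ to $A_1 \odot \cdots \odot A_{n-1}$ extends to a unital \sh{} $\tilde{\psi} \colon E \to D$, where $E := A_1 \otmin \cdots \otmin A_{n-1}$. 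Since $\psi(A_1 \odot \cdots \odot A_{n-1} \otimes 1)$ commutes with $\psi(1 \otimes A_n)$, continuity of $\tilde{\psi}$ together with density shows that $\tilde{\psi}(E)$ commutes with $\psi(1 \otimes A_n)$ in $D$; consequently there is a unital \sh{} $\theta \colon E \odot A_n \to D$ determined by $\theta(x \otimes a) = \tilde{\psi}(x)\,\psi(1 \otimes a)$, which agrees with $\psi$ on $A_1 \odot \cdots \odot A_n$. Applying Proposition~\ref{prop:C*-min-max} to $\theta$, with the genuine \Cs s $E$ and $A_n$, extends it to $E \otmin A_n \to D$; and associativity of the minimal tensor product identifies $E \otmin A_n$ with $A_1 \otmin \cdots \otmin A_n$, completing the induction.

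I expect the main obstacle to be precisely this inductive step. The point is that Proposition~\ref{prop:C*-min-max} applies only to genuine \Cs s, so one cannot simply feed in the incomplete algebra $A_1 \odot \cdots \odot A_{n-1}$; one must first complete in the first $n-1$ variables, which is what the induction supplies, and then verify that passing to the completion preserves commutativity of the two images, which is exactly where continuity of $\tilde{\psi}$ enters. By contrast, the reduction to the finite case and the final gluing are routine inductive-limit bookkeeping, and the only ingredient beyond Proposition~\ref{prop:C*-min-max} is the associativity of $\otmin$.
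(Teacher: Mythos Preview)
Your proof is correct and follows essentially the same approach as the paper: reduce to finite tensor products by repeated (inductive) use of Proposition~\ref{prop:C*-min-max}, then pass to the limit. The only cosmetic difference is that the paper routes the limiting step through the maximal tensor product, showing $\Ker\big(\bigotimes^{\max}_n A_n \to \bigotimes^{\min}_n A_n\big) \subseteq \Ker\big(\bigotimes^{\max}_n A_n \to D\big)$ via the equality $\Ker(\pi) = \overline{\bigcup_N \Ker(\pi_N)}$, whereas you work directly with the inductive-limit description of $\bigotimes^{\min}_n A_n$ and glue the finite extensions; the mathematical content is identical.
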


\begin{proof} Consider the natural \sh{}
$$\pi \colon \textstyle{\bigotimes^{\mathrm{max}}_{n \in \N}}  \, A_n  \to \textstyle{\bigotimes^{\mathrm{min}}_{n \in \N}} \,  A_n,$$ and the natural extension $\overline{\varphi} \colon \bigotimes^{\mathrm{max}}_{n \in \N}  \, A_n  \to D$ of $\varphi_0$. For each integer $N \ge 2$ consider also the natural \sh{}
$$\pi_N \colon \textstyle{\bigotimes^{\mathrm{max}}_{1\le n \le N}}  \; A_n  \to \textstyle{\bigotimes^{\mathrm{min}}_{1 \le n \le N}}  \; A_n,$$ and the restriction $\overline{\varphi}_N \colon \textstyle{\bigotimes^{\mathrm{max}}_{1 \le n \le N} } \; A_n  \to D$ of $\overline{\varphi}$.
We must show that $\Ker(\pi) \subseteq \Ker(\overline{\varphi})$. It follows from repeated applications of Proposition \ref{prop:C*-min-max} that $\Ker(\pi_N) \subseteq \Ker(\overline{\varphi}_N)$ for all $N$. This verifies the claim because 
$$\Ker(\pi) = \overline{\bigcup_{N=2}^\infty \Ker(\pi_N)}, \qquad \Ker(\overline{\varphi}) = \overline{\bigcup_{N=2}^\infty \Ker(\overline{\varphi}_N)}.$$
\end{proof}

\noindent The proposition below is an analog of Corollary~\ref{cor:Abel}:

\begin{prop} \label{prop:quotient} Let $A$ be a unital separable \Cs{} for which $T(A) \ne \emptyset$, and let $D$ be a  a unital separable sub-\Cs{} of $F(A)$. Then there  is a unital \sh{} 
$$
\otmininfty D \to F(A)/(F(A) \cap J(A)).
$$
In particular, if $F(A)$ has no characters, then there is such a unital \sh{} for some unital separable \Cs{} $D$ without characters. (This \Cs{} $D$ can further be taken to be $A(n,2)$ for some $n \ge 1$.)
\end{prop}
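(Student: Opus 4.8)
The plan is to combine Corollary~\ref{cor:Abel}, which produces a unital \sh{} out of the \emph{maximal} infinite tensor power, with Proposition~\ref{prop:C*-min-max-inf}, which lets one descend from the maximal to the minimal tensor power as soon as the target carries a separating family of traces. The separating family is supplied for free by Lemma~\ref{lm:separating}(iii), applied to the quotient $F(A)/(F(A)\cap J(A))$. Thus the proof is essentially an assembly of results already at hand, the only genuine move being the restriction of a max-tensor map to the algebraic tensor power followed by re-extension to the minimal completion.

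For the first assertion I would argue as follows. By Corollary~\ref{cor:Abel} there is a unital \sh{} $\Phi \colon \otmaxinfty D \to F(A)$. Let $q \colon F(A) \to F(A)/(F(A)\cap J(A))$ be the quotient map; this quotient is unital and nonzero, since $\|1\|_2 = 1 \neq 0$ (using $T(A) \neq \emptyset$) gives $1 \notin J(A)$. Hence $q \circ \Phi \colon \otmaxinfty D \to F(A)/(F(A)\cap J(A))$ is a unital \sh{} whose target has a separating family of tracial states by Lemma~\ref{lm:separating}(iii). Restricting $q\circ\Phi$ along the canonical inclusion of the algebraic infinite tensor power $\bigodot_{n} D$ (every factor equal to $D$) into $\otmaxinfty D$ produces a \sh{} $\varphi_0 \colon \bigodot_{n} D \to F(A)/(F(A)\cap J(A))$. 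Proposition~\ref{prop:C*-min-max-inf}, applied with $A_n = D$ for all $n$, then extends $\varphi_0$ to the desired unital \sh{} $\otmininfty D \to F(A)/(F(A)\cap J(A))$; it is automatically unital, since the produced map agrees with the unital map $q\circ\Phi$ on the dense subalgebra $\bigodot_n D$.

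For the \emph{in particular} clause, suppose $F(A)$ has no characters. By Proposition~\ref{prop:A(n,2)}(iii) there is a unital \sh{} $\psi \colon A(n,2) \to F(A)$ for some $n \ge 1$, and its image $D := \psi(A(n,2))$ is a separable unital sub-\Cs{} of $F(A)$. This $D$ has no characters, for a character on $D$ would compose with $\psi$ to give a character on $A(n,2)$, contradicting Proposition~\ref{prop:A(n,2)}(i). Applying the first part to this $D$ yields the asserted \sh{} for a \Cs{} without characters. To obtain the parenthetical version with domain exactly $\otmininfty A(n,2)$, I would precompose with the unital \sh{} $\otmininfty A(n,2) \to \otmininfty D$ obtained, by functoriality of the minimal tensor product, from the surjection $\psi \colon A(n,2) \twoheadrightarrow D$.

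I do not expect a serious obstacle: the conceptual weight of the max-to-min descent has already been absorbed into Propositions~\ref{prop:C*-min-max} and \ref{prop:C*-min-max-inf}. The only point demanding care is verifying the hypothesis of Proposition~\ref{prop:C*-min-max-inf}, namely that $F(A)/(F(A)\cap J(A))$ genuinely has a separating family of traces, which is exactly Lemma~\ref{lm:separating}(iii) and is the reason one must land in the quotient rather than in $F(A)$ itself. The remaining ingredients---functoriality of $\otmin$, density of $\bigodot_n D$ in $\otmaxinfty D$, and the character pullback argument---are routine.
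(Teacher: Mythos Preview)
Your proposal is correct and follows essentially the same route as the paper: compose the Corollary~\ref{cor:Abel} map with the quotient onto $F(A)/(F(A)\cap J(A))$, invoke Lemma~\ref{lm:separating}(iii) for a separating family of traces, and apply Proposition~\ref{prop:C*-min-max-inf} to descend from the maximal to the minimal tensor power. For the second part the paper cites Lemma~\ref{lm:sep-characters} together with Proposition~\ref{prop:A(n,2)}, whereas you go directly through Proposition~\ref{prop:A(n,2)}(iii) and take the image of $A(n,2)$; since the proof of Lemma~\ref{lm:sep-characters} does exactly this, the two arguments are the same in substance.
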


\begin{proof} 
It follows from Corollary~\ref{cor:Abel} that there is a unital \sh{}
$\otmaxinfty D \to F(A)$, and hence a unital \sh{} $$\otmaxinfty D \to F(A)/(F(A) \cap J(A)).$$ As $F(A)/(F(A) \cap J(A))$ has a separating family of tracial states (by Lemma~\ref{lm:separating}), the existence of the desired unital \sh{} follows from Proposition~\ref{prop:C*-min-max-inf}.

The second part of the proposition follows from Lemma~\ref{lm:sep-characters} and Proposition~\ref{prop:A(n,2)}.
\end{proof}

\noindent We remind the reader of property (SI) of Matui and Sato (see for example \cite[Definition~4.1]{MatuiSato:Z}) in the formulation of \cite{KirRor:Central-sequence}: A unital \Cs{} $A$ is said to have property (SI) if for all positive contractions $e,f \in F(A)$ with $e \in J(A)$ and $\sup_k \|1-f^k\|_2 < 1$ there exists $s \in F(A)$ with $fs = s$ and $s^*s = e$. This implies that $e \precsim f$ in $F(A)$ (see Remark~\ref{rem:Cuntz}) and moreover, that $e \precsim (f-1/2)_+$. 

Every simple, unital, separable, stably finite, nuclear \Cs{} with the "local weak comparison property" (see \cite{KirRor:Central-sequence}) has property (SI) by \cite{MatuiSato:Z} and \cite{KirRor:Central-sequence}. 

Property (SI) implies that certain liftings from $F(A)/(F(A) \cap J(A))$ to $F(A)$ are possible (as proved for example in \cite{MatuiSato:Z} and in \cite[Proposition 5.12]{KirRor:Central-sequence}). We need here a stronger lifting result (Proposition \ref{prop:5.12'} below). The next lemma about the dimension drop \Cs s $I(k,k+1)$ is an elaboration of known results:

\begin{lemma} \label{lm:dimensiondrop}
Let $k \ge 2$ be an integer.
\begin{enumerate}
\item There are positive contractions $a_1, \dots, a_k$ in $I(k,k+1)$, which are pairwise orthogonal and equivalent, such that \vspace{.1cm}
\begin{itemize}
\item[(a)] $a_0 := 1-(a_1+ \cdots +a_k) = t^*(a_1-1/2)_+t$ for some $t \in I(k,k+1)$, \vspace{.1cm}
\item[(b)] $\tau(a_1^n) \ge \frac{1}{k+1}$ for all tracial states $\tau$ on $I(k,k+1)$ and for all $n \ge 1$. 
\end{itemize} \vspace{.1cm}
\item If $A$ and $B$  are unital \Cs s, if $\pi \colon A \to B$ is a surjective unital \sh, and if $b_0,b_1, \dots, b_k$ are positive contractions in $B$ such that $b_1, \dots, b_k$ are pairwise orthogonal and equivalent, $b_0+b_1+ \cdots + b_k = 1_B$, and $b_0 = t(b_1-1/2)_+t^*$ for some $t \in B$, then there exist positive contractions $a_0,a_1, \dots, a_k$ in $A$ such that $a_1, \dots a_k$ are pairwise orthogonal and equivalent, and 
$$a_0 \precsim (a_1-1/2)_+, \qquad \pi(a_j) = b_j,  \quad j=0,1, \dots k.$$
\end{enumerate}
\end{lemma}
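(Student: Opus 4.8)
The plan is to prove (i) by an explicit construction inside $C([0,1],M_k\otimes M_{k+1})$ and to prove (ii) by a projectivity/lifting argument. For (i) I would realize $I(k,k+1)$ as the continuous paths $x\mapsto F(x)\in M_k\otimes M_{k+1}$ with $F(0)\in M_k\otimes\C$ and $F(1)\in\C\otimes M_{k+1}$, and design each $a_j$ so that at every interior point it has $k$ eigenvalues equal to $1$ together with a single eigenvalue that fades from $1$ at $x=0$ to $0$ at $x=1$; this is precisely what forces the rank to drop from $k+1$ (at $0$) to $k$ (at $1$). Concretely, let $\{e_{ij}\}$ and $\{f_{pq}\}$ be the matrix units of $M_k$ and $M_{k+1}$, put $f'=\sum_{p=1}^k f_{pp}$, and let $h\colon[0,1]\to[0,1]$ be continuous with $h(0)=1$, $h(1)=0$. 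Inside the corner determined by $1_k\otimes f'$ choose a continuous unitary path $u(x)$, extended by the identity on $1_k\otimes f_{k+1,k+1}$, with $u(0)=1$ and $u(1)$ the symmetry $\Sigma$ that flips the two legs of the corner, so that $\Sigma(e_{jj}\otimes f')\Sigma^*=1_k\otimes f_{jj}$; such a path exists because the unitary group of the corner is connected. Setting $p_j(x)=u(x)(e_{jj}\otimes f')u(x)^*$ and
\[
a_j(x)=p_j(x)+h(x)\,(e_{jj}\otimes f_{k+1,k+1}),\qquad j=1,\dots,k,
\]
one checks $a_j(0)=e_{jj}\otimes 1\in M_k\otimes\C$ and $a_j(1)=1_k\otimes f_{jj}\in\C\otimes M_{k+1}$, so $a_j\in I(k,k+1)$.

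The required properties are then verified directly. The $p_j(x)$ are pairwise orthogonal (coming from orthogonal $e_{jj}$) and orthogonal to each $e_{jj}\otimes f_{k+1,k+1}$, so $a_ia_j=0$ for $i\ne j$. Since the $a_j(x)$ share the same spectrum, the equivalences $a_i\sim a_j$ are implemented by $x_{ij}=v_{ij}\,a_j^{1/2}$, where $v_{ij}(x)=u(x)(e_{ij}\otimes f')u(x)^*+e_{ij}\otimes f_{k+1,k+1}$; multiplication by $a_j^{1/2}$ is what makes the boundary values of $x_{ij}$ land in $M_k\otimes\C$ and $\C\otimes M_{k+1}$, so $x_{ij}\in I(k,k+1)$. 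A short computation gives $a_0=1-\sum_j a_j=(1-h)\,q$ with $q=1_k\otimes f_{k+1,k+1}$ a rank-$k$ projection, and for (a) I would take $t=\sqrt{2(1-h)}\,u\,W_0$ with $W_0=\sum_{i=1}^k e_{1i}\otimes f_{i,k+1}$ the partial isometry from the range of $q$ onto the range of $e_{11}\otimes f'$; the point is that $t^*$ annihilates the fading direction of $a_1$ and carries the flat part $\tfrac12 p_1$ of $(a_1-1/2)_+$ onto $\tfrac12 q$, giving $t^*(a_1-1/2)_+t=(1-h)q=a_0$, while $t(0)=0$ and $t(1)=\sqrt2\,(1_k\otimes f_{1,k+1})$ respect the boundary. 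Finally, every tracial state on $I(k,k+1)$ has the form $F\mapsto\int_0^1\mathrm{tr}(F(x))\,d\mu(x)$ for a probability measure $\mu$, and $\mathrm{tr}(a_1(x)^n)=(k+h(x)^n)/(k(k+1))\ge 1/(k+1)$ pointwise, which yields (b).

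For (ii) I would first observe that a system of $k$ pairwise orthogonal, mutually equivalent positive contractions is the same thing as a $^*$-homomorphism out of the cone $C_0((0,1])\otimes M_k$: sending the canonical generators $\iota\otimes e_{jj}$ to the $b_j$ and $\iota\otimes e_{ij}$ to $x_ix_j^*$ (built from the polar parts of the given equivalences) defines such a map $\psi\colon C_0((0,1])\otimes M_k\to B$. Since this cone is projective, $\psi$ lifts along the surjection $\pi$ to $\tilde\psi\colon C_0((0,1])\otimes M_k\to A$, and $a_j:=\tilde\psi(\iota\otimes e_{jj})$ are pairwise orthogonal, mutually equivalent positive contractions with $\pi(a_j)=b_j$; moreover $(a_1-1/2)_+=\tilde\psi((\iota-1/2)_+\otimes e_{11})$ lifts $(b_1-1/2)_+$. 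To produce $a_0$, lift the given $t\in B$ to $\tilde t\in A$ and set $a_0'=\tilde t\,(a_1-1/2)_+\,\tilde t^*$; then $\pi(a_0')=t(b_1-1/2)_+t^*=b_0$, and writing $a_0'=xx^*$ with $x=\tilde t\,(a_1-1/2)_+^{1/2}$ gives $a_0'\sim x^*x\le\|\tilde t\|^2(a_1-1/2)_+$, hence $a_0'\precsim(a_1-1/2)_+$ (see Remark~\ref{rem:Cuntz}). Truncating, $a_0:=\min(a_0',1)$ via functional calculus is a positive contraction with $a_0\le a_0'$, so $a_0\precsim(a_1-1/2)_+$, while $\pi(a_0)=\min(b_0,1)=b_0$.

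The main obstacle I anticipate is (i): arranging the interior interpolation so that orthogonality, equality of spectra, and the dimension-drop boundary conditions hold simultaneously. The device that makes this manageable is to keep one fixed rank-$k$ ``fading'' subspace $q$ and to move only the eigenvalue-$1$ parts by the unitary path $u(x)$ connecting the identity to the leg-flip $\Sigma$, thereby reducing the whole interpolation to connectedness of the unitary group; after that, the formula for $a_0$, the witness $t$ in (a), and the trace bound (b) are all immediate. In (ii) the only nonroutine point is recognizing $C_0((0,1])\otimes M_k$ as the universal object for the configuration and invoking its projectivity; the construction of $a_0$ is then a standard Cuntz-comparison manipulation.
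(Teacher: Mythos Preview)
Your argument for (ii) is essentially the same as the paper's: both invoke projectivity of $C_0((0,1])\otimes M_k$ to lift $b_1,\dots,b_k$ and then lift $t$ arbitrarily to produce $a_0$. Your added truncation $a_0=\min(a_0',1)$ is a small improvement, since the paper does not explicitly arrange $a_0$ to be a contraction.

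For (i) your approach is correct but genuinely different from the paper's. The paper does not build the $a_j$ explicitly; instead it invokes the universal property of $I(k,k+1)$ established in \cite[Proposition~5.1]{RorWin:Z} to obtain $b_1,\dots,b_k$ with $b_0\precsim (b_1-\ep)_+$, then applies a functional calculus $a_j=g(b_j)$ (flattening near $1$) to force the exact identity in (a), and finally proves the trace bound (b) by passing to a finite von Neumann algebra and estimating with spectral projections. Your route---fixing a rank-$k$ ``fading'' direction $q=1_k\otimes f_{k+1,k+1}$ and conjugating the remaining rank-$k$ projections by a unitary path from $1$ to the leg-flip $\Sigma$---is more concrete and entirely self-contained: orthogonality, equivalence, the formula $a_0=(1-h)q$, the witness $t=\sqrt{2(1-h)}\,uW_0$, and the pointwise trace estimate $\mathrm{tr}_x(a_1(x)^n)\ge 1/(k+1)$ all fall out of the explicit matrices, with no need for the von Neumann embedding. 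The paper's argument is shorter because it outsources the construction to \cite{RorWin:Z}, but yours makes the geometry of the dimension drop transparent and turns (b) into a one-line computation rather than a spectral-projection estimate.
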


\begin{proof}
(i). By the universal property for $I(k,k+1)$, cf.\ \cite[Proposition 5.1]{RorWin:Z}, there are positive contractions $b_1, \dots, b_k$ in $I(k,k+1)$, which are pairwise orthogonal and equivalent, such that 
$$b_0:= 1-(b_1+ \cdots + b_k) \precsim (b_1-\ep)_+$$
for some $\ep >0$. Choose $\eta \in (0,\ep)$ such that
$$\frac{k(1-\eta)-1}{k^2(1-\eta)} \le \frac{1}{k+1}.$$
Consider the continuous functions $g_\eta, h_\eta \colon [0,1] \to [0,1]$ given by 
$$g(t) = \begin{cases} \eta^{-1}t, & 0 \le t \le \eta,\\ 1, & t \ge \eta, \end{cases} \qquad h(t) = \begin{cases} 0, & 0 \le t \le 1-\eta, \\ \eta^{-1}(t-1+\eta), & t \ge 1-\eta.\end{cases}$$
Observe that $h(t) = 1-g(1-t)$. Put $a_j = g(b_j)$ for $j=1, 2, \dots, k$, and put $a_0 = 1 - (a_1+\cdots +a_k)= h(b_0)$. It is clear that $a_1, \dots, a_k$ are pairwise orthogonal and  equivalent. Let us check that (a) and (b) hold.

(a). Since $0<\eta < \ep$ there are elements $t_1, t_2 \in I(k,k+1)$ such that $t_1^*(a_1-1/2)_+t_1 = (b_1-\ep)_+$ and $t_2^*(b_1-\ep)_+t_2 = h(b_0) = a_0$. Hence $t = t_1t_2$ is as  desired. 

(b). Let $\tau$ be a tracial state on $I(k,k+1)$ and embed $I(k,k+1)$ into a finite von Neumann algebra $M$ such that $\tau$ extends to a trace on $M$. Set $p_j = 1_{[\eta,1]}(b_j)$ for $j=1,2, \dots, k$. Then $p_1 \sim p_2 \sim \cdots \sim p_k$ in $M$, 
$$p:= 1_{[\eta,1]}(b_1+b_2 + \cdots + b_k) = p_1+p_2+ \cdots +p_k,$$
and $(b_1+ \cdots + b_k)(1-p) \le \eta (1-p)$. It follows that $b_0(1-p) \ge (1-\eta)(1-p)$. Thus
\begin{eqnarray*}
\tau(1-p) & \le & (1-\eta)^{-1}\tau(b_0(1-p)) \le  (1-\eta)^{-1} d_\tau(b_0) \\ & \le & (1-\eta)^{-1}d_\tau((b_1-\ep)_+) \le (1-\eta)^{-1} k^{-1},
\end{eqnarray*}
where $d_\tau$ is the dimension function associated with $\tau$. Hence, for each $n \ge 1$,
$$\tau(a_1^n) \ge \tau(p_1) = k^{-1}\tau(p) \ge k^{-1}(1-  (1-\eta)^{-1} k^{-1}) \ge (k+1)^{-1}$$
as desired.

(ii). 
A standard trick, using that $C_0((0,1]) \otimes M_k$ is projective and that $b_1, \dots, b_k$ are the images of elements of the form $\iota \otimes e_{jj}$ under a \sh{} from $C_0((0,1]) \otimes M_k$  into $B$, shows that the elements $b_1, \dots, b_k$ lift to positive, pairwise orthogonal and  equivalent, contractions $a_1, \dots, a_k$ in $A$. Lift $t \in B$ to an element $s \in A$ and put $a_0 =s^*(a_1-1/2)_+s$. It is now clear that $a_0, a_1, \dots, a_k$ have the desired properties.
\end{proof}

\begin{prop} \label{prop:5.12'} The following conditions are equivalent for any  separable, simple, unital \Cs{} $A$ with $T(A) \ne \emptyset$ and which has property (SI):
\begin{enumerate}
\item There is a unital embedding $\cZ \to F(A)$. \vspace{.1cm}
\item There is a unital embedding $\cZ \to F(A)/(F(A) \cap J(A))$. \vspace{.1cm}
\item There is a unital \sh{} $I(4,5) \to F(A)/(F(A) \cap J(A))$.
\end{enumerate}
\end{prop}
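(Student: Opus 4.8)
The plan is to prove the cycle of implications (i) $\Rightarrow$ (ii) $\Rightarrow$ (iii) $\Rightarrow$ (i). The first implication is immediate: composing a unital embedding $\cZ \to F(A)$ with the quotient map $\pi \colon F(A) \to F(A)/(F(A) \cap J(A))$ yields a unital, hence non-zero, \sh{} out of the simple \Cs{} $\cZ$, which is therefore injective. (Here $F(A)/(F(A)\cap J(A)) \ne 0$ because $\|1\|_2 = 1$, so $1 \notin J(A)$.) The implication (ii) $\Rightarrow$ (iii) is equally soft: since $\gcd(4,5)=1$, the dimension drop algebra $I(4,5)$ embeds unitally into $\cZ$ by Jiang and Su, so a unital embedding $\cZ \to F(A)/(F(A)\cap J(A))$ restricts to a unital \sh{} $I(4,5) \to F(A)/(F(A) \cap J(A))$.

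The entire content of the proposition lies in (iii) $\Rightarrow$ (i). I would first reduce the target. By Theorem~\ref{thm:Z-absorption} it suffices to exhibit \emph{any} unital separable subhomogeneous sub-\Cs{} of $F(A)$ without characters; and since any quotient of a character-free \Cs{} is again character-free (a character on a quotient pulls back to one upstairs), the image of \emph{any} unital \sh{} $I(4,5) \to F(A)$ is such an algebra. Thus the goal becomes: produce a unital \sh{} $I(4,5) \to F(A)$. By the universal property of the dimension drop algebras \cite[Proposition 5.1]{RorWin:Z} (as used in Lemma~\ref{lm:dimensiondrop}(i)), this in turn amounts to finding pairwise orthogonal, pairwise equivalent positive contractions $\tilde a_1, \dots, \tilde a_4 \in F(A)$ with $1 - \sum_{j=1}^4 \tilde a_j \precsim (\tilde a_1 - \ep)_+$ for some $\ep > 0$.

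To build these elements I would transport the canonical ones through the map from (iii) and then lift. Let $\psi \colon I(4,5) \to F(A)/(F(A)\cap J(A))$ be the given unital \sh, and let $a_0, a_1, \dots, a_4 \in I(4,5)$ be the positive contractions of Lemma~\ref{lm:dimensiondrop}(i) (with $k=4$). Their images $b_j := \psi(a_j)$ satisfy the hypotheses of Lemma~\ref{lm:dimensiondrop}(ii) for the surjection $\pi \colon F(A) \to F(A)/(F(A)\cap J(A))$, so that lemma provides positive contractions $\tilde a_0, \tilde a_1, \dots, \tilde a_4 \in F(A)$ with $\tilde a_1, \dots, \tilde a_4$ pairwise orthogonal and equivalent, $\tilde a_0 \precsim (\tilde a_1 - 1/2)_+$, and $\pi(\tilde a_j) = b_j$. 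Writing $d := 1 - \sum_{j=1}^4 \tilde a_j \ge 0$, we have $\pi(d) = b_0 = \pi(\tilde a_0)$, so $d - \tilde a_0 \in F(A) \cap J(A)$ and hence its positive part $e := (d - \tilde a_0)_+$ is a positive contraction in $J(A)$. The trace bound (b) of Lemma~\ref{lm:dimensiondrop}(i), combined with Lemma~\ref{lm:separating}(i) and the fact that every trace computing $\|\cdot\|_{2,\omega}$ factors through $\pi$, gives $\|1 - \tilde a_1^m\|_2^2 \le 1 - \tau(\tilde a_1^m) \le 1 - \tfrac15$ for all $m$, so $\sup_m \|1 - \tilde a_1^m\|_2 < 1$. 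Property (SI), applied to the pair $(e, \tilde a_1)$, then produces $s \in F(A)$ with $\tilde a_1 s = s$ and $s^*s = e$, whence $e \precsim (\tilde a_1 - 1/2)_+$.

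The hard part is the final assembly. We now have $d \le \tilde a_0 + e$ with \emph{both} $\tilde a_0 \precsim (\tilde a_1 - 1/2)_+$ and $e \precsim (\tilde a_1 - 1/2)_+$, but this only yields $\langle d \rangle \le 2\langle (\tilde a_1 - 1/2)_+\rangle$, whereas the universal property demands subordination to a \emph{single} copy $(\tilde a_1 - \ep)_+$. The obstacle, therefore, is to merge the ``tracial bulk'' comparison $\tilde a_0 \precsim (\tilde a_1-1/2)_+$ and the ``trace-zero remainder'' comparison $e \precsim (\tilde a_1-1/2)_+$ into one. I expect this to be handled by exploiting the spectral room at the top of the spectrum of $\tilde a_1$: the relation $\tilde a_1 s = s$ forces the range of $s$ into the spectral subspace of $\tilde a_1$ at $1$, which is disjoint from the part of $(\tilde a_1-1/2)_+$ carrying $\tilde a_0$, so that $e$ can be absorbed \emph{orthogonally} to $\tilde a_0$ inside $(\tilde a_1 - \ep)_+$ for small $\ep$. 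The trace lower bound $\tau(a_1^m) \ge 1/(k+1)$ --- which guarantees both the (SI) hypothesis and that $d$ has trace at most one ``block'' worth --- is exactly what provides this room. Once $d \precsim (\tilde a_1-\ep)_+$ is established, the universal property furnishes the desired unital \sh{} $I(4,5) \to F(A)$, and Theorem~\ref{thm:Z-absorption} completes the proof.
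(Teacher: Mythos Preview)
Your implications (i) $\Rightarrow$ (ii) $\Rightarrow$ (iii) are fine, and your setup for (iii) $\Rightarrow$ (i) via Lemma~\ref{lm:dimensiondrop} is correct up to the point you flag. But the obstacle you identify is real, and your proposed resolution does not work. The relation $\tilde a_1 s = s$ does force $ss^*$ into the hereditary subalgebra at the top of $\mathrm{sp}(\tilde a_1)$, but nothing prevents the Cuntz subequivalence $\tilde a_0 \precsim (\tilde a_1 - 1/2)_+$ from already consuming that part of the spectrum: the element witnessing $\tilde a_0 \precsim (\tilde a_1-1/2)_+$ comes from Lemma~\ref{lm:dimensiondrop}(ii) as a completely uncontrolled lift, so there is no orthogonality between $ss^*$ and the ``support'' of $\tilde a_0$ inside $(\tilde a_1-1/2)_+$. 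All you can legitimately conclude is $\langle d \rangle \le 2\langle(\tilde a_1-1/2)_+\rangle$, which is not enough for a map out of $I(4,5)$.

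The paper's way around this is to abandon the target $I(4,5)$ and aim for $I(2,3)$ instead, using the four lifted elements in pairs. Concretely: apply property~(SI) not to $\tilde a_1$ but to a \emph{different} element $\tilde a_2$ (which satisfies the same trace lower bound), obtaining $s$ with $\tilde a_2 s = s$ and $s^*s$ equal to the trace-zero remainder, hence the remainder is $\precsim (\tilde a_2-1/2)_+$. Now set $b_1 = \tilde a_1 + \tilde a_2$ and $b_2 = \tilde a_3 + \tilde a_4$; these are orthogonal, equivalent positive contractions, and
\[
1 - b_1 - b_2 \;\precsim\; (\tilde a_1 - 1/2)_+ \oplus (\tilde a_2 - 1/2)_+ \;\precsim\; (b_1 - 1/2)_+,
\]
because $(\tilde a_1-1/2)_+$ and $(\tilde a_2-1/2)_+$ are orthogonal summands inside $(b_1-1/2)_+$. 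This yields a unital \sh{} $I(2,3) \to F(A)$ via \cite[Proposition~5.1]{RorWin:Z}, and then Theorem~\ref{thm:Z-absorption} finishes. The point you were missing is precisely this pairing trick: the whole reason the hypothesis involves $I(4,5)$ rather than $I(2,3)$ is to have the spare copies $\tilde a_2, \tilde a_3, \tilde a_4$ available to absorb the trace-zero error orthogonally to the place where $\tilde a_0$ already lives. (The paper also streamlines the decomposition of $1 - \sum_{j\ge 1}\tilde a_j$ by invoking the $\sigma$-ideal property of $J(A)$ to produce a quasicentral $g \in F(A) \cap J(A)$, rather than your positive-part trick, but either approach reaches the same two-summand estimate.)
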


\begin{proof} It is trivial that (i) $\Rightarrow$ (ii); and (ii) $\Rightarrow$ (iii) holds because $I(4,5)$ embeds unitally into $\cZ$. 

To prove that (iii) $\Rightarrow$ (i) it sufficies to show that if (iii) holds, then there is a unital \sh{} $I(2,3) \to F(A)$, cf.\ Theorem~\ref{thm:Z-absorption}. By Lemma~\ref{lm:dimensiondrop} (i) and (ii) there are positive contractions $a_0,a_1,a_2,a_3,a_4$ in $F(A)$ satisfying:
\begin{itemize}
\item $a_1,a_2,a_3,a_4$ are pairwise orthogonal and pairwise equivalent,
\item $a_0  \precsim (a_1-1/2)_+$, 
\item $\tau(a_j^n) \ge 1/5$ for $j=1,\dots, 4$, for all  $\tau \in T(A)$, and for all integers $n \ge 1$,
\item $e:=1-(a_0+a_1+ \cdots + a_4) \in J(A)$. 
\end{itemize}
Put $c = 1-(a_1+ \cdots + a_4)$. 
It follows from the fact that $J(A)$ is a $\sigma$-ideal in $A_\omega$ (cf.\ \cite[Definition 4.4 and Lemma 4.6]{KirRor:Central-sequence}) that there is a positive contraction $g$ in $F(A) \cap J(A)$ such that $ge=eg=e$ and $ga_j=a_jg$ for all $j$. Now,
$$c = gc+ (1-g)c = gc + (1-g)a_0 \precsim g \oplus a_0 \precsim g \oplus (a_1-1/2)_+.$$
By property (SI) it follows that there is $s \in F(A)$ such that $a_2s = s$ and $s^*s = g$. In particular, $g \precsim (a_2-1/2)_+$. 

Put $b_1 = a_1+a_2$ and $b_2 = a_3+a_4$. Then $b_1$ and $b_2$ are positive, pairwise orthogonal and equivalent contractions, and
$$1 - (b_1+b_2) = c \precsim g \oplus (a_1-1/2)_+ \precsim (a_1-1/2)_+ \oplus (a_2 -1/2)_+ \precsim (b_1-1/2)_+.$$
We now get the \sh{} $I(2,3) \to F(A)$ from \cite[Proposition 5.1]{RorWin:Z}. 
\end{proof}

\begin{prop} \label{prop:char-SI}
 Suppose that Question \ref{q3} has an affirmative answer. Then Question \ref{q1} has an affirmative answer for each separable, simple, unital \Cs{} $A$ with at least one tracial state and with property (SI).
\end{prop}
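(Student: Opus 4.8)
The plan is to establish the nontrivial ``if'' direction of Question~\ref{q1} for such $A$: assuming $F(A)$ has no characters, I would show that $A \cong A \otimes \cZ$. (The ``only if'' direction holds for every unital separable \Cs{} by Theorem~\ref{thm:Z-absorption}.) By Theorem~\ref{thm:Z-absorption} it suffices to produce a unital embedding $\cZ \to F(A)$, and in fact, by Proposition~\ref{prop:5.12'}, it even suffices to produce a unital embedding $\cZ \to F(A)/(F(A) \cap J(A))$, since $A$ is separable, simple, unital, has $T(A) \ne \emptyset$, and enjoys property (SI). So the whole task reduces to finding such an embedding into the trace-separating quotient.

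To obtain it, I would first invoke Proposition~\ref{prop:quotient}. Because $T(A) \ne \emptyset$ and $F(A)$ has no characters, it furnishes a unital separable \Cs{} $D$ without characters (one may take $D = A(n,2)$ for a suitable $n$, by Proposition~\ref{prop:A(n,2)}) together with a unital \sh{}
$$\otmininfty D \to F(A)/(F(A) \cap J(A)).$$
Here it is essential that we have passed to the quotient $F(A)/(F(A) \cap J(A))$, which carries a separating family of tracial states by Lemma~\ref{lm:separating}; this is precisely what upgrades the \emph{maximal} tensor power of Corollary~\ref{cor:Abel} to the \emph{minimal} tensor power appearing above, via Propositions~\ref{prop:C*-min-max} and \ref{prop:C*-min-max-inf}.

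Now the hypothesis enters. Since Question~\ref{q3} is assumed to have an affirmative answer and $D$ is unital, separable and without characters, $\cZ$ embeds unitally into $\otmininfty D$. Composing this embedding with the \sh{} above gives a unital \sh{} $\cZ \to F(A)/(F(A) \cap J(A))$, which is automatically injective because $\cZ$ is simple and the map is unital (hence nonzero). This is exactly condition~(ii) of Proposition~\ref{prop:5.12'}, so condition~(i) holds and there is a unital embedding $\cZ \to F(A)$; Theorem~\ref{thm:Z-absorption} then yields $A \cong A \otimes \cZ$, as desired.

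The argument is essentially a concatenation of results already established, and I do not expect a single hard computational step. The genuine content lies in recognizing how the pieces interlock: the minimal-tensor-power formulation of the Dadarlat--Toms question (Question~\ref{q3}) is exactly the one compatible with the trace-separating quotient $F(A)/(F(A) \cap J(A))$ produced by Proposition~\ref{prop:quotient}, while property (SI) is precisely the device (through Proposition~\ref{prop:5.12'}) that permits lifting a $\cZ$-embedding from that quotient back to $F(A)$ itself. The main thing to get right is therefore the matching of the two tensor-product conventions and the verification that all the standing hypotheses --- separability, simplicity, existence of a tracial state, and (SI) --- are in force at each invocation.
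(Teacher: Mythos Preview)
Your proposal is correct and follows essentially the same route as the paper: invoke Proposition~\ref{prop:quotient} to obtain a unital \sh{} $\otmininfty D \to F(A)/(F(A)\cap J(A))$ with $D$ characterless, apply the assumed affirmative answer to Question~\ref{q3} to embed $\cZ$ unitally into $\otmininfty D$, and then use Proposition~\ref{prop:5.12'} to lift back to $F(A)$. The paper's version is terser but otherwise identical in structure.
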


\noindent In other words, if Question \ref{q3} has an affirmative answer and if $A$ is a separable unital simple \Cs{} which admits a tracial state and has property (SI), then $A \cong A \otimes \cZ$ if and only if $F(A)$ has no characters. 

\begin{proof} Suppose that $A$ is a unital separable \Cs{} such that $F(A)$ has no characters. Then, by Proposition~\ref{prop:quotient}, there is a unital \sh{} 
$$\otmininfty D \to F(A)/(F(A) \cap J(A))$$
 for some separable unital \Cs{} $D$ without characters. If Question \ref{q3} has an affirmative answer, then Jiang-Su algebra $\cZ$ embeds unitally into $\otmininfty D$, and hence also into $F(A)/(F(A) \cap J(A))$. The conclusion now follows from Proposition~\ref{prop:5.12'} above. 
\end{proof}

\noindent We saw above that, in the presence of property (SI), $\cZ$ embeds unitally into $F(A)$ if it embeds unitally into $F(A) / (F(A) \cap J(A))$. The property of having no characters similarly lifts from $F(A) / (F(A) \cap J(A))$ to $F(A)$:

\begin{prop} \label{prop:char-equiv}
Let $A$ be a unital \Cs{} with property (SI) and for which $T(A) \ne \emptyset$. 
Then $F(A)$ has no characters if and only if  $F(A)/(F(A) \cap J(A))$ has no characters.
\end{prop}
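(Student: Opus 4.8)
The plan is to prove the two implications separately; only the direction ``$Q:=F(A)/(F(A)\cap J(A))$ has no characters $\Rightarrow$ $F(A)$ has no characters'' requires work, the converse being the trivial observation that a character on $Q$ pulls back along the quotient map $\pi\colon F(A)\to Q$ to a character on $F(A)$. So I assume $Q$ has no characters and, aiming at a contradiction, suppose $\chi$ is a character on $F(A)$.

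First I would manufacture, inside $Q$, a finite ``$2$-divisible cover'' of the unit and lift it to $F(A)$. Since $Q$ has no characters, the proof of Proposition~\ref{prop:A(n,2)}(iii) (via \cite[Corollary 5.4]{RobRor:divisible}) produces \sh s $\psi_1,\dots,\psi_n\colon CM_2\to Q$ together with elements $\bar d_1,\dots,\bar d_n\in Q$ satisfying $\sum_{j=1}^n \bar d_j^{\,*}\,\psi_j(\iota\otimes e_{11})\,\bar d_j = 1_Q$. As the cone $CM_2=C_0((0,1])\otimes M_2$ is projective, each $\psi_j$ lifts through $\pi$ to a \sh{} $\widetilde\psi_j\colon CM_2\to F(A)$; lift also $\bar d_j$ to $d_j\in F(A)$. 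Put $h_j=\widetilde\psi_j(\iota\otimes e_{11})$ and $h=\sum_{j=1}^n d_j^{\,*}h_jd_j$. Then each $h_j$ is a positive contraction with $2\langle h_j\rangle\le\langle 1\rangle$ (because $\widetilde\psi_j(\iota\otimes e_{22})$ is orthogonal and equivalent to $h_j$, and $h_j+\widetilde\psi_j(\iota\otimes e_{22})\le 1$), while $h$ is positive with $\pi(h)=1_Q$, so that $1-h\in F(A)\cap J(A)$.

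The decisive step is the computation of $\chi$ on these elements. Because $\chi$ is a character, its associated dimension function $d_\chi$ takes only the values $0$ and $1$ on $F(A)$, with $d_\chi(a)=1$ exactly when $\chi(a)>0$. Hence $2\langle h_j\rangle\le\langle 1\rangle$ forces $2\,d_\chi(h_j)\le d_\chi(1)=1$, so $d_\chi(h_j)=0$ and $\chi(h_j)=0$ for every $j$; consequently $\chi(h)=\sum_j|\chi(d_j)|^2\chi(h_j)=0$. Now set $f=\min(h,1)$ by continuous functional calculus: $f$ is a positive contraction with $\pi(f)=1_Q$, hence $1-f^k\in F(A)\cap J(A)$ and $\|1-f^k\|_2=0$ for all $k$, so $\sup_k\|1-f^k\|_2<1$; moreover $\chi(f)=\min(\chi(h),1)=0$. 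On the other hand $e:=(1-h)_+$ is a positive contraction lying in $F(A)\cap J(A)$, and since $1-h\le e$ and $\chi$ is positive we obtain $\chi(e)\ge\chi(1-h)=1-\chi(h)=1>0$.

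Finally I would play $e$ against $f$ using property (SI). Applying (SI) to the positive contraction $e\in J(A)$ and to $f$ (which satisfies $\sup_k\|1-f^k\|_2<1$) yields $s\in F(A)$ with $fs=s$ and $s^*s=e$. Then $|\chi(s)|^2=\chi(s^*s)=\chi(e)>0$, so $\chi(s)\ne 0$, and from $fs=s$ we get $\chi(f)\chi(s)=\chi(s)$, whence $\chi(f)=1$. This contradicts $\chi(f)=0$, and so $F(A)$ has no characters. I expect the only genuinely delicate point to be the lifting in the second paragraph: one must preserve the \emph{exact} halving relation $2\langle h_j\rangle\le\langle 1\rangle$ in passing from $Q$ to $F(A)$, which is precisely why the cover is produced through the cones $CM_2$ (whose projectivity makes each $\psi_j$ liftable) rather than by lifting an abstract \sh{} $A(n,2)\to Q$. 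Everything after that is a short character computation, the crux of which is that a character's dimension function is $\{0,1\}$-valued, feeding into a single application of (SI).
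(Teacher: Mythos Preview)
Your proof is correct, but it takes a considerably more elaborate path than the paper's. The paper simply observes that if $\rho$ is a character on $F(A)$ not factoring through $Q$, then the maximal ideal $\ker\rho$ cannot contain $F(A)\cap J(A)$, so $F(A)=\ker\rho+(F(A)\cap J(A))$; writing $1$ accordingly yields a positive contraction $e\in F(A)\cap J(A)$ with $\rho(1-e)=0$, after which one applies (SI) with $f=1-e$ (since $\|1-(1-e)^n\|_2=0$) to obtain $s$ with $(1-e)s=s$ and $s^*s=e$, giving the contradiction $\rho(e)=|\rho(s)|^2=|\rho((1-e))|^2|\rho(s)|^2=0$. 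Your route instead exploits the characterlessness of $Q$ \emph{structurally}: you build a $CM_2$-cover in $Q$, lift it via projectivity of the cone, and then use that a character's dimension function is $\{0,1\}$-valued to force $\chi(h_j)=0$. This gives an explicit reason why $\chi$ must vanish on something mapping to the unit of $Q$, at the price of invoking projectivity of $CM_2$ and dimension-function monotonicity; the paper's maximal-ideal trick bypasses all of this in two lines. Both arguments conclude with the same single application of (SI).
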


\begin{proof} Note that $F(A) \cap J(A)$ is a proper ideal in $F(A)$ because $T(A) \ne \emptyset$, so $F(A)/(F(A) \cap J(A))$ is non-zero. Any character on the quotient $F(A)/(F(A) \cap J(A))$  lifts to a character on $F(A)$ by composition with the quotient map. 

To prove the "if"-part, suppose, to reach a contradiction, that $\rho$ is a character on $F(A)$ and that $F(A)/(F(A) \cap J(A))$ has no characters. Then $\mathrm{Ker}(\rho)$ cannot be contained in 
$F(A) \cap J(A)$, whence
$F(A) = \mathrm{Ker}(\rho) + F(A) \cap J(A)$. We can therefore find a positive contraction $e \in F(A) \cap J(A)$ such that $\rho(1-e)=0$. As $\|1-(1-e)^n\|_{2} = 0$ for all $n \ge 1$, property (SI) (the version given in \cite[Definition 2.6]{KirRor:Central-sequence}) gives an $s \in F(A)$ such that $(1-e)s = s$ and $s^*s = e$. Hence $\rho(s)=0$, so $\rho(e) = 0$, a contradiction.
\end{proof}

\noindent We end this section by giving an alternative definition of property (SI) using Lemma~\ref{lm:separating}. 

\begin{prop} \label{prop:SI} A unital separabel \Cs{} has property (SI) if and only if the following holds for all positive elements $a$ and $b$ in $F(A)$ and all $\delta >0$:
$$ \Big(\forall \tau \in T_\omega(A) : \tau(a) = 0 \; \text{and} \; \tau(b) \ge \delta\Big) \implies a \precsim b \; \text{in} \; F(A).$$
\end{prop}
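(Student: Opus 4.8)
The plan is to prove the two implications separately. The implication ``(SI) $\Rightarrow$ comparison'' is the more transparent one, since the conclusion $fs=s$, $s^*s=e$ of property (SI) already encodes a Cuntz subequivalence $s^*s\precsim f$; here the only cleverness is to feed the right $f$ into (SI). The reverse implication ``comparison $\Rightarrow$ (SI)'' requires the opposite passage, namely upgrading a Cuntz subequivalence back to the sharp relations in the definition of (SI), and this is where the real work lies.

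For ``(SI) $\Rightarrow$ comparison'', let $a,b$ be positive elements of $F(A)$ and $\delta>0$ with $\tau(a)=0$ and $\tau(b)\ge\delta$ for all $\tau\in T_\omega(A)$. Rescaling (which only shrinks $\delta$) I may assume $a,b$ are contractions. Since $\tau(a)=0$ forces $\tau(a^2)=0$ for all $\tau$, Lemma~\ref{lm:separating}~(i) gives $\|a\|_2=0$, so $a\in J(A)$. Now put $f=f_\ep(b)$, where $f_\ep(t)=\min(t/\ep,1)$ and $0<\ep<\delta$; then $f$ is a positive contraction and $f\precsim b$ because $f_\ep$ is supported on $(0,\|b\|]$. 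A short spectral estimate, using $\tau(b)\ge\delta$ and $b\le1$, bounds the spectral mass of $b$ below $\ep$ by $\tau(1_{[0,\ep)}(b))\le 1-\delta+\ep$; since $\tau((1-f^k)^2)\le\tau(1_{[0,\ep)}(b))$ for every $k$, this yields $\sup_k\|1-f^k\|_2^2\le 1-\delta+\ep<1$. Thus $e:=a$ and $f$ meet the hypotheses of (SI), so there is $s\in F(A)$ with $fs=s$ and $s^*s=a$. From $fs=s$ one computes $ss^*=f\,ss^*\,f\le\|ss^*\|\,f^2\le\|ss^*\|\,f$, whence $a\sim ss^*\precsim f\precsim b$, as desired.

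For ``comparison $\Rightarrow$ (SI)'', let $e,f$ be positive contractions in $F(A)$ with $e\in J(A)$ and $\gamma:=\sup_k\|1-f^k\|_2<1$. By Lemma~\ref{lm:separating}~(i), $e\in J(A)$ is equivalent to $\tau(e)=0$ for all $\tau\in T_\omega(A)$. The condition on $f$ translates, through the same separating family of traces and the spectral theorem in each tracial von Neumann algebra, into a uniform lower bound $\tau(1_{(c,1]}(f))\ge 1-\gamma^2$ for every $c<1$ and every $\tau\in T_\omega(A)$: letting $k\to\infty$ in $(1-c^k)^2\,\tau(1_{[0,c]}(f))\le\tau((1-f^k)^2)\le\gamma^2$ gives exactly this. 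Consequently, for each integer $m\ge1$ one has $\tau\big((f^m-1/2)_+\big)\ge\tfrac14\,\tau(1_{[(3/4)^{1/m},1]}(f))\ge\tfrac14(1-\gamma^2)=:\delta>0$ for all $\tau$, a bound \emph{independent} of $m$. Applying the assumed comparison property with $a=e$ and $b=(f^m-1/2)_+$ therefore gives $e\precsim(f^m-1/2)_+$ in $F(A)$ for every $m$.

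It remains to manufacture a single $s\in F(A)$ with $fs=s$ and $s^*s=e$, and this is the main obstacle. The point is that $(f^m-1/2)_+$ is supported on the shrinking spectral region $\{f>2^{-1/m}\}$, so by a standard Cuntz-comparison lemma the subequivalence $e\precsim(f^m-1/2)_+$ produces elements $s_m$ with $s_m^*s_m=(e-1/m)_+$ and $s_ms_m^*$ in the hereditary subalgebra of $(f^m-1/2)_+$; cutting down by a function of $f$ equal to $1$ on that region gives $\|(1-f)s_m\|\le\big(1-2^{-1/m}\big)+o(1)\to0$. A reindexation argument, exploiting that $e\in J(A)$ and that $J(A)$ is a $\sigma$-ideal in $A_\omega$ (so that the approximants $(e-1/m)_+$ can be corrected back to $e$ and the asymptotically vanishing defect $(1-f)s_m$ absorbed), then assembles the $s_m$ into a genuine $s\in F(A)$ satisfying $fs=s$ and $s^*s=e$ exactly. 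Executing this passage to the limit \emph{inside} $F(A)$, rather than stopping at the weaker Cuntz relation $e\precsim(f-1/2)_+$, is the crux of the proof.
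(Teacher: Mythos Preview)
Your argument is correct and follows essentially the paper's strategy in both directions: a functional-calculus cut-off of $b$ fed into (SI) for the forward implication, and for the reverse, approximate intertwiners $s_m$ with range supported where $f$ is close to $1$ (the paper uses functions $g_n(f)$ vanishing on $[0,1-1/n]$ in place of your $(f^m-1/2)_+$, but the effect is the same), then assembled into a single exact $s$. One small correction on that last step: the tool that does the assembly is the ``$\ep$-test'' (countable saturation of the ultrapower, \cite[Lemma~3.1]{KirRor:Central-sequence}), not the $\sigma$-ideal property of $J(A)$; since $\|s_m^*s_m-e\|\le 1/m$ and $\|(1-f)s_m\|\to 0$ already hold in norm, a straight diagonalization over the ultrafilter suffices and the $\sigma$-ideal plays no role here.
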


\begin{proof} As in the proof of Lemma \ref{lm:separating} we see that 
\begin{equation} \tag{$\dagger$}
\|e\|_1 = \sup_{\tau \in T_\omega(A)} \tau(e)
\end{equation}
for all positive elements $e \in A_\omega$. 

Suppose first that $A$ has property (SI), and let $a,b \in A$ and $\delta >0$ be such that $\tau(a) =0$ and $\tau(b) \ge \delta$ for all $\tau \in T_\omega(A)$. We may assume that $a$ and $b$ are contractions (possibly upon changing $\delta$). Note that  $a$ belongs to $J(A)$ by Lemma \ref{lm:separating}.  Let $h \colon \R^+ \to [0,1]$ be a continuous function such that $h(0)=0$ and $h(t) = 1$ for all $t \ge \delta/2$. Put $f = h(b)$. Then $\tau(f^n) \ge \delta/2$ for all $n \ge 1$ and for all $\tau \in T_\omega(A)$. Hence $\|1-f^n\|_1 \le 1-\delta/2$ for all $n \ge 1$ by ($\dagger$). Since $A$ is assumed to have property (SI) there is $s \in F(A)$ such that $fs=s$ and $s^*s = a$. In particular, $a = s^*fs$, so $a \precsim f \precsim b$.

Suppose next that $A$ satisfies the condition of the proposition. Let $e,f$ be positive contractions in $F(A)$ satisfying $e \in J(A)$ and $\|1-f^n\|_1 \le 1-\delta$ for some $\delta >0$ and for all $n \ge 1$. Then $\tau(e) = 0$ for all $\tau \in T_\omega(A)$ by Lemma \ref{lm:separating}. Define a sequence $\{g_n\}$ of continuous functions on $[0,1]$ satisfying $g_n g_{n+1} = g_{n+1}$, $g_n(1)=1$, and $g_n|_{[0,1-n^{-1}]} \equiv 0$.  Then $\tau(g_n(f)) \ge \delta$ for all $\tau \in T_\omega(A)$ and for all $n$. By assumption, this implies that $e \precsim g_n(f)$, so there exists $t_n \in F(A)$ such that $\|t_n^*g_n(f)t_n-e\| \le 1/n$. Put $s_n = g_n(f)^{1/2}t_n$. Then $\|s_n\| \le 2$,  $\|s_n^*s_n - e\| \le 1/n$ and
$$\|(1-f)s_n\| = \|(1-f)g_{n-1}(f)s_n\| \le \|(1-f)g_{n-1}(f)\|\|s_n\| \le 2/(n-1)$$
for all $n$.
One can now use the "$\ep$-test" (see \cite[Lemma 3.1]{KirRor:Central-sequence}) to find $s \in F(A)$ such that $s^*s = e$ and $(1-f)s = 0$. 
\end{proof}
\section{The central sequence algebra and the Corona Factorization Property}

\noindent A \Cs{} $A$ is said to have the \emph{Corona Factorization Property} if every full projection in the multiplier algebra of $A \otimes \cK$ is properly infinite. If every closed two-sided ideal of $A$ has the Corona Factorization Property, then we say that $A$ has the \emph{strong Corona Factorization Property}.  The Corona Factorization Property was studied by Elliott and Kucerovsky in \cite{EllKuc:Voiculescu} in order to obtain Voiculescu type absorption results for extensions of \Cs s. 

It was shown in \cite[Theorem 5.13]{OPR:Cuntz} that a separable \Cs{} $A$ has the strong Corona Factorization Property if and only if its Cuntz semigroup has the so-called strong Corona Factorization Property for semigroups, cf.\  \cite[Definition~2.12]{OPR:Cuntz}: For every $x',x,y_1,y_2,y_3, \dots$ in $\Cu(A)$ and $m \in \N$ such that
$x' \ll x$ and $x \le my_n$ in $\Cu(A)$ for all $n \ge 1$, there exists $k \ge 1$ such that $x' \le y_1+y_2+ \cdots + y_k$ in $\Cu(A)$. The (strong) Corona Factorization Property can therefore be viewed as a weak comparability property for $\Cu(A)$. (See Remark~\ref{rem:Cuntz} for the definition of the Cuntz semigroup.)

It was shown in \cite[Proposition 6.3]{RobRor:divisible} that $\bigotimes^\infty D$ has the strong Corona Factorization Property whenever $D$ is a unital \Cs{} without characters. (The argument works for any tensor product, for example the maximal one.) We use this result, along with Corollary~\ref{cor:Abel}, to show that any unital separable \Cs{} $A$ has the strong Corona Factorization Property if $F(A)$ has no characters. 

We need two lemmas, the first of which says that 
whenever $P$ is an intermediate \Cs{} between $A$ and $A_\omega$, then the map $\Cu(A) \to \Cu(P)$, induced by the inclusion $A \subseteq P$,  is an order inclusion.

\begin{lemma} \label{lm:A-D-comparison}
Let $A$ be a \Cs, let $\omega$ be a free filter on $\N$, and let $P$ be a \Cs{} such that $A \subseteq P \subseteq A_\omega$. 
\begin{enumerate}
\item If $x,y \in \Cu(A)$, then $x \le y$ in $\Cu(A)$ if and only if $x \le y$ in $\Cu(P)$, \vspace{.1cm}
\item If $x,x' \in \Cu(A)$, then $x' \ll x$ in $\Cu(A)$ if and only if $x' \ll x$ in $\Cu(P)$.
\end{enumerate}
\end{lemma}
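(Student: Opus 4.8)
The two "only if" implications (from $\Cu(A)$ into $\Cu(P)$) are immediate: the inclusion $A \subseteq P$ induces a $^*$-homomorphism $A \otimes \cK \to P \otimes \cK$, and any $^*$-homomorphism preserves the relations $\precsim$ and $\ll$ (if $x_k^* b x_k \to a$ in $A \otimes \cK$ then the same holds in $P \otimes \cK$, and $a \precsim (b-\ep)_+$ is likewise preserved). Thus the content lies entirely in the two "if" implications. I claim moreover that (ii) reduces to (i): writing $x = \langle a\rangle$ and $x' = \langle a'\rangle$ with $a,a' \in (A\otimes\cK)_+$, the hypothesis $x' \ll x$ in $\Cu(P)$ means $a' \precsim (a-\ep)_+$ in $P \otimes \cK$ for some $\ep>0$; since $(a-\ep)_+$ again lies in $A\otimes\cK$, part (i) applied to the pair $a'$, $(a-\ep)_+$ gives $a' \precsim (a-\ep)_+$ in $A \otimes \cK$, i.e.\ $x' \ll x$ in $\Cu(A)$. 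So I concentrate on the "if" part of (i).

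The heart is the following lifting statement, which I isolate: \emph{if $B \subseteq Q \subseteq B_\omega$ are \Cs s and $p,q \in B_+$ satisfy $p \precsim q$ in $Q$, then $p \precsim q$ in $B$.} To prove it, fix $\ep>0$; by the standard facts on Cuntz subequivalence, applied inside $Q$, there is $w \in Q$ with $(p-\ep/2)_+ = w^* q w$. Lift $w$ to a bounded sequence $(w_n)$ in $B$ under the quotient map $\ell^\infty(B) \to B_\omega$. Since $q$ and $(p-\ep/2)_+$ lie in $B$ and are represented by constant sequences, the element $w^* q w = (p-\ep/2)_+$ is represented by $(w_n^* q w_n)_n$, whence $\lim_{n\to\omega}\|w_n^* q w_n - (p-\ep/2)_+\| = 0$. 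As $\omega$ is a proper filter, there is a set in $\omega$ on which $\|w_n^* q w_n - (p-\ep/2)_+\| < \ep/2$, and such a set is non-empty; choosing an index $n$ in it gives $(p-\ep)_+ \precsim w_n^* q w_n \precsim q$ in $B$. Letting $\ep \to 0$ yields $p \precsim q$ in $B$.

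To deduce (i), write $x = \langle a\rangle$, $y = \langle b\rangle$ with $a,b \in (A\otimes\cK)_+$ and assume $a \precsim b$ in $P\otimes\cK$. For $\ell \in \N$ let $p_\ell = 1 \otimes e_\ell \in M(A\otimes\cK)$ be the compression onto the first $\ell$ coordinates; then $p_\ell c p_\ell \in M_\ell(A)$ converges in norm to $c$ and satisfies $p_\ell c p_\ell \precsim c$ for every $c \in A \otimes \cK$. Fix $\ep>0$; it suffices to show $(a-\ep)_+ \precsim b$ in $A \otimes \cK$. Pick $w \in P\otimes\cK$ with $(a-\ep/4)_+ = w^* b w$, and choose $\ell$ so large that $w_0 := p_\ell w p_\ell \in M_\ell(P)$ satisfies $\|(a-\ep/4)_+ - w_0^* b_0 w_0\| < \ep/4$ and $\|a - p_\ell a p_\ell\| < \ep/4$, where $b_0 := p_\ell b p_\ell \in M_\ell(A)$; here I use the identity $w_0^* b w_0 = w_0^* b_0 w_0$, valid because $w_0 = p_\ell w_0 p_\ell$. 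Setting $\tilde a := p_\ell a p_\ell \in M_\ell(A)$, the cut-down estimates give $\|\tilde a - w_0^* b_0 w_0\| < 3\ep/4$, hence $(\tilde a - 3\ep/4)_+ \precsim w_0^* b_0 w_0 \precsim b_0$ in $M_\ell(P)$, while $\|a - \tilde a\| < \ep/4$ gives $(a-\ep)_+ \precsim (\tilde a - 3\ep/4)_+$ in $A \otimes \cK$. Since $M_\ell(A) \subseteq M_\ell(P) \subseteq M_\ell(A_\omega) = M_\ell(A)_\omega$, the lifting statement applied to $B = M_\ell(A)$, $Q = M_\ell(P)$ and the pair $(\tilde a - 3\ep/4)_+$, $b_0 \in M_\ell(A)_+$ yields $(\tilde a - 3\ep/4)_+ \precsim b_0$ in $M_\ell(A)$. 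Chaining $(a-\ep)_+ \precsim (\tilde a - 3\ep/4)_+ \precsim b_0 \precsim b$ in $A \otimes \cK$ and letting $\ep \to 0$ gives $a \precsim b$ in $A \otimes \cK$, which is (i).

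The main obstacle is the passage between $A \otimes \cK$ and its matrix corners $M_\ell(A)$: one must capture the Cuntz-comparison witness inside a finite corner $M_\ell(P)$, so that the compression identity $w_0^* b w_0 = w_0^*(p_\ell b p_\ell)w_0$ applies and the lifting can be carried out where $M_\ell(A)_\omega = M_\ell(A_\omega)$, all the while tracking the cut-downs $(\,\cdot\,-\ep)_+$ carefully enough that the estimates close up. Once this reduction is in place, the lifting statement itself is routine.
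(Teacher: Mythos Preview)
Your proof is correct and rests on the same core idea as the paper's: lift the Cuntz-subequivalence witness from $P$ (resp.\ $P\otimes\cK$) to a representing sequence with entries in $A$ (resp.\ $A\otimes\cK$), then extract a single good index. Your reduction of (ii) to (i) is exactly the paper's.

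The difference is that you regard the passage from $A\otimes\cK$ to matrix corners as ``the main obstacle'' and spend most of the argument on a careful compression to $M_\ell(P)$ so that you can invoke the easy identification $M_\ell(A_\omega)=M_\ell(A)_\omega$. The paper avoids this detour entirely by observing the chain of inclusions
\[
A\otimes\cK \;\subseteq\; P\otimes\cK \;\subseteq\; A_\omega\otimes\cK \;\subseteq\; (A\otimes\cK)_\omega,
\]
the last of which holds because $A_\omega\otimes\cK=\varinjlim_n M_n(A)_\omega$ sits naturally inside $(A\otimes\cK)_\omega$. Thus any $r\in P\otimes\cK$ with $\|r^*br-a\|<\ep$ already lifts to a sequence $(r_k)$ in $A\otimes\cK$, and picking $k$ with $\|r_k^*br_k-a\|<\ep$ finishes (i) in one line. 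Your compression-and-lifting argument is a valid workaround, but once you see the inclusion $A_\omega\otimes\cK\subseteq(A\otimes\cK)_\omega$, the $\ep/4$ bookkeeping with $p_\ell$, $\tilde a$, $b_0$ becomes unnecessary.
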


\begin{proof} For each integer $n \ge 1$, we view  $M_n(A)$ and $M_n(P)$ as being hereditary sub-\Cs s of $A \otimes \cK$ and $P \otimes \cK$, respectively. Let $a,a',b$ be positive elements in $A \otimes \cK$ representing $x$, $x'$, and $y$, respectively.

(i). The "if"-part is clear. Suppose that $x \le y$ in $\Cu(P)$ and let $\ep >0$ be given. Then there exists $r \in P \otimes \cK$ such that $\|r^*br - a\| < \ep$. Observe that $$A \otimes \cK \; \subseteq \; P \otimes \cK \; \subseteq \; A_\omega \otimes \cK \; \subseteq \; (A \otimes \cK)_\omega.$$ Write
$$r = \pi_\omega(r_1, r_2,r_3, \dots),$$
where $r_k \in A \otimes \cK$ for each $k$. Then
$$\ep > \|r^*br - a\| = \limsup_\omega \|r_k^* b r_k - a\|.$$
It follows that $\|r_k^*br_k-a\|< \ep$ for some $k$. As $\ep>0$ was arbitrary this proves that $a \precsim b$ in $A \otimes \cK$; and hence $x \le y$ in $\Cu(A)$.

(ii). Observe that $x' \ll x$ (in $\Cu(A)$ or in $\Cu(P)$) if and only if $a' \precsim (a-\ep)_+$ (in $A \otimes \cK$ or in $P \otimes \cK$) for some $\ep >0$. Hence (ii) follows from (i).
\end{proof}

\begin{lemma} \label{prop:A-D-CFP}
Let $A$ be a separable \Cs, let $\omega$ be a free filter on $\N$, and let $P$ be a separable \Cs{} such that $A \subseteq P \subseteq A_\omega$. Then $A$  has the strong Corona Factorization Property if $P$ does.
\end{lemma}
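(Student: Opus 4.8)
The plan is to use the semigroup characterisation of the strong Corona Factorization Property recalled just above from \cite[Theorem 5.13]{OPR:Cuntz}, together with the order-theoretic transfer between $\Cu(A)$ and $\Cu(P)$ furnished by Lemma~\ref{lm:A-D-comparison}. Since both $A$ and $P$ are separable, it suffices to show that $\Cu(A)$ has the strong Corona Factorization Property for semigroups, under the assumption that $\Cu(P)$ does. The inclusion $A \subseteq P$ induces, by functoriality of the Cuntz semigroup, an additive and order-preserving map $\Cu(A) \to \Cu(P)$, which by Lemma~\ref{lm:A-D-comparison} moreover reflects both $\le$ and $\ll$; this is the only tool I expect to need.

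Concretely, I would start with arbitrary elements $x',x,y_1,y_2,y_3,\dots$ in $\Cu(A)$ and an integer $m \in \N$ satisfying $x' \ll x$ and $x \le m y_n$ in $\Cu(A)$ for all $n \ge 1$, with the goal of producing $k \ge 1$ such that $x' \le y_1 + \cdots + y_k$ in $\Cu(A)$. The first step is to transport the hypotheses into $\Cu(P)$ along the induced map. Because this map is additive, the image of $m y_n$ is $m$ times the image of $y_n$, and by Lemma~\ref{lm:A-D-comparison} the relations $x' \ll x$ and $x \le m y_n$ persist in $\Cu(P)$.

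Next I would apply the strong Corona Factorization Property for semigroups in $\Cu(P)$ to the images of $x',x,y_1,y_2,\dots$ and the same integer $m$. This yields some $k \ge 1$ for which $x' \le y_1 + \cdots + y_k$ holds in $\Cu(P)$. Finally, since both $x'$ and $y_1 + \cdots + y_k$ are images of single elements of $\Cu(A)$ — the latter represented by an orthogonal sum of positive elements of $A \otimes \cK$ — Lemma~\ref{lm:A-D-comparison}(i) lets me pull this inequality back to $\Cu(A)$. This establishes the semigroup property for $\Cu(A)$, and hence, by \cite[Theorem 5.13]{OPR:Cuntz}, the strong Corona Factorization Property for $A$.

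The argument is essentially a routine transfer once Lemma~\ref{lm:A-D-comparison} is available, so I do not anticipate a genuine obstacle. The one point requiring care is the bookkeeping with the finite sums $m y_n$ and $y_1 + \cdots + y_k$: I must ensure these are genuinely images of elements of $\Cu(A)$ (realised via orthogonal sums), so that Lemma~\ref{lm:A-D-comparison}, which compares elements of $\Cu(A)$ viewed inside $\Cu(P)$, applies verbatim. Additivity of the induced Cuntz-semigroup map is exactly what makes this work.
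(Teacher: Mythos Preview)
Your proposal is correct and follows essentially the same route as the paper's proof: transfer the hypotheses from $\Cu(A)$ to $\Cu(P)$ via Lemma~\ref{lm:A-D-comparison}, apply the strong Corona Factorization Property for semigroups in $\Cu(P)$, and pull the resulting inequality back to $\Cu(A)$ using Lemma~\ref{lm:A-D-comparison}(i). If anything, you are more explicit than the paper about the bookkeeping with finite sums and the additivity of the induced map.
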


\begin{proof}  We verify that $\Cu(A)$ has the strong Corona Factorization Property for semigroups. Accordingly, suppose that $x',x,y_1,y_2,y_3, \dots$ in $\Cu(A)$ and $m \in \N$ are given such that
$x' \ll x$ and $x \le my_n$ in $\Cu(A)$ for all $n \ge 1$. Since $P$ is assumed to have the strong Corona Factorization Property we know that there exists $k \ge 1$ such that $x' \le y_1+y_2+ \cdots + y_k$ in $\Cu(P)$. But then $x' \le y_1+y_2+ \cdots + y_k$ in $\Cu(A)$ by Lemma~\ref{lm:A-D-comparison}~(i). 
\end{proof}

\noindent  It is shown by Kucerovsky and Ng in \cite[Theorem 3.1]{KucNg:CFP} that the quotient of any separable \Cs{} with the Corona Factorization Property again has the Corona Factorization Property. It follows from this result that  the quotient of any separable \Cs{} with the strong Corona Factorization Property again has the strong Corona Factorization Property. 
Indeed, suppose that $A$ has the strong Corona Factorization property, that $B$ is a quotient of $A$, and that $\pi \colon A \to B$ is the quotient mapping. Let $I$ be a closed two-sided ideal of $B$, and let $J= \pi^{-1}(I)$. Then $J$ has the Corona Factorization Property, since it is a closed two-sided ideal in $A$, and  hence $I = \pi(J)$ has the Corona Factorization Property, because $I$ is a quotient of $J$. 

\begin{thm} \label{thm:CFP} Let $A$ be a unital separable \Cs{} such that the central sequence algebra $F(A)$ has no characters. Then $A$ has the strong Corona Factorization Property.
\end{thm}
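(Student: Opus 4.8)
The plan is to deduce everything from Lemma~\ref{prop:A-D-CFP}, which reduces the statement to the following: produce a \emph{separable} intermediate \Cs{} $P$ with $A \subseteq P \subseteq A_\omega$ that has the strong Corona Factorization Property. The only mechanism available for manufacturing this property is the result of Robert and the first named author that $\otmaxinfty D$ has it for every unital \Cs{} $D$ without characters, together with the quotient-stability of the strong Corona Factorization Property noted just above the theorem. So $P$ must be built as a quotient of such an infinite tensor power that happens to sit inside $A_\omega$ and to contain $A$.

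First I would feed in the hypothesis. Since $F(A)$ has no characters, Lemma~\ref{lm:sep-characters} (or Proposition~\ref{prop:A(n,2)}, taking $D=A(n,2)$) provides a unital separable sub-\Cs{} $D \subseteq F(A)$ without characters. By Corollary~\ref{cor:Abel} there is a unital \sh{} $\varphi \colon \otmaxinfty D \to F(A) \subseteq A_\omega$. Because its image $Q := \varphi(\otmaxinfty D)$ lies in $A' \cap A_\omega = F(A)$, it commutes with $A$; hence $A$ and $Q$ generate a \emph{separable} sub-\Cs{} $P := C^*(A,Q)$ with $A \subseteq P \subseteq A_\omega$, and the commuting pair $(A,Q)$ induces a surjective unital \sh{} $A \otmax \otmaxinfty D \to P$, $a \otimes z \mapsto a\,\varphi(z)$. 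Thus $P$ is a quotient of $A \otmax \otmaxinfty D$, and by quotient-stability it suffices to prove that $A \otmax \otmaxinfty D$ has the strong Corona Factorization Property.

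This last point is the main obstacle, and it is where the cited result does not apply verbatim: $A \otmax \otmaxinfty D$ is \emph{not} an infinite tensor power of a single character-free algebra, since one cannot collapse several non-commuting copies of $A$ into one (for $A$ simple without characters there is no unital \sh{} $\otmax^\infty A \to A$, so $A \otmax \otmaxinfty D$ is not even a quotient of $\otmaxinfty(A\otmax D)$). I would instead exploit self-absorption: because $\otmaxinfty D \cong \otmaxinfty(\otmaxinfty D)$, we have $A \otmax \otmaxinfty D \cong (A \otmax \otmaxinfty D) \otmax \otmaxinfty D$, so $A \otmax \otmaxinfty D$ tensorially absorbs $\otmaxinfty D$ through the commuting central factor $1_A \otmax \otmaxinfty D$. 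The divisibility and comparison estimates that drive the proof of \cite[Proposition~6.3]{RobRor:divisible} for $\otmaxinfty D$ are carried out inside this copy of $\otmaxinfty D$ and are insensitive to the extra tensor factor $A$; the technical heart of the argument is to check that running that argument in $A \otmax \otmaxinfty D$ establishes the strong Corona Factorization Property for the semigroup $\Cu(A \otmax \otmaxinfty D)$, so that $A \otmax \otmaxinfty D$ has the strong Corona Factorization Property by \cite[Theorem~5.13]{OPR:Cuntz}. Granting this, the quotient map transports the property to $P$, and Lemma~\ref{prop:A-D-CFP} applied to $A \subseteq P \subseteq A_\omega$ yields the strong Corona Factorization Property for $A$.
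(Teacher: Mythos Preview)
Your overall architecture is exactly the paper's: find a separable characterless $D \subseteq F(A)$, build the intermediate algebra $P$ as the image of $A \otmax \big(\otmaxinfty D\big)$ in $A_\omega$, transport the strong Corona Factorization Property from the tensor product to its quotient $P$ via Kucerovsky--Ng, and finish with Lemma~\ref{prop:A-D-CFP}.

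The one substantive divergence is that you manufacture an obstacle that is not there. The result \cite[Proposition~6.3]{RobRor:divisible} already applies to $A \otmax \big(\otmaxinfty D\big)$, not merely to $\otmaxinfty D$; the paper invokes it directly, noting only that one must first observe (via Lemma~\ref{lm:A-char}) that $A$ itself has no characters. So your self-absorption detour and the promise to ``re-run the divisibility and comparison estimates inside the central copy of $\otmaxinfty D$'' are unnecessary. As written, that part of your proposal is also a gap rather than a proof: you assert that the argument of \cite[Proposition~6.3]{RobRor:divisible} is ``insensitive to the extra tensor factor $A$'' without carrying out the verification. Replace that paragraph with a one-line citation of the actual statement of \cite[Proposition~6.3]{RobRor:divisible} (together with the observation that $A$ has no characters), and your proof coincides with the paper's.
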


\begin{proof} First use Lemma~\ref{lm:sep-characters} to find a separable unital sub-\Cs{} $D$ of $F(A)$ without characters. Then use Corollary~\ref{cor:Abel} to find a unital \sh{} 
$$\varphi \colon A \otmax \Big(\otmaxinfty D \Big) \to A_\omega$$
so that $\varphi(a \otimes 1) = a$ for all $a \in A$. Let $P$ be the image of $\varphi$. Then $A \subseteq P \subseteq A_\omega$, and $P$ is isomorphic to a quotient of $A \otmax \big(\otmaxinfty D \big)$. It was shown in \cite[Proposition~6.3]{RobRor:divisible} that $A \otmax \big(\otmaxinfty D \big)$ has the strong Corona Factorization Property (use Lemma~\ref{lm:A-char} to see that $A$ has no characters).  By the result of Kucerovsky and Ng mentioned above, we can conclude that $P$ has the strong Corona Factorization Property. 
It finally follows from Lemma~\ref{prop:A-D-CFP} that $A$ has the strong Corona Factorization Property.
\end{proof}

\noindent The contrapositive of Theorem~\ref{thm:CFP} is perhaps more interesting: If $A$ is a unital separable \Cs{} which does not have the (strong) Corona Factorization Property, then the central sequence algebra $F(A)$ has a character. In the remark below we use this to give  examples of separable nuclear \Cs s whose central sequence algebra has a character. 

\begin{rems} \label{rem:CFP}

 (i). The example in \cite{Ror:simple} of a simple separable nuclear \Cs{} $W$ with a finite and an infinite projection fails to have the Corona Factorization Property. Thus $F(W)$ has a character. 
This fact is not mentioned explicitly in \cite{Ror:simple}, but it follows from its construction.  Indeed, inspection shows that $W$  contains projections $p,q_0,q_1,q_2, \dots$ such that $p \nprecsim q_0 \oplus q_1 \oplus \cdots \oplus q_n$ while $p \precsim q_n \oplus q_n$ for all $n \ge 0$. Taking $x=x' = \langle p \rangle$, $y_j = \langle q_j \rangle$ in the Cuntz semigroup and $m=2$, we see that the Cuntz semigroup does not have the strong Corona Factorization Property.

(ii). It is shown in \cite[Corollary 5.16]{OPR:CFP-RR0} that every separable simple \Cs{} of real rank zero with the Corona Factorization Property is either stably finite or purely infinite. We do not know if this also holds for general separable simple \Cs{} (possibly not of real rank zero). Hence we do not know if any separable simple \Cs{} which contains a finite and an infinite projection automatically will fail to have the Corona Factorization Property. See also Proposition~\ref{prop:sf-pi} below.

(iii). The example in \cite{Ror:sns} of a simple non-stable AH-algebra $A$, such that some matrix algebra over $A$ is stable, must fail to have the Corona Factorization Property, cf.\ \cite[Corollary 4.3]{KucNg:CFP}. A unital corner $B$ of $A$ will serve as an example of a unital stably finite separable nuclear simple \Cs{} without the Corona Factorization Property. Thus $F(B)$ has a character.

(iv). Consider an example (as in (iii) above) of a simple, separable, unital, nuclear, stably finite \Cs{} $A$ which does not have the Corona Factorization Property. Then $F(A)$ has a character and also a  quotient isomorphic to a hyperfinite II$_1$-von Neumann algebra (by \cite[Lemma 2.1]{Sato:Jiang-Su}, see also \cite[Theorem 3.3]{KirRor:Central-sequence}). In particular, $F(A)$ is non-abelian. This contrasts the situation for II$_1$-factors, where McDuff proved that the von Neumann central sequence algebra either is abelian or a II$_1$-von Neumann algebra.

(v). Let us finally note that not all unital separable simple \Cs s with the Corona Factorization Property are $\cZ$-absorbing. Indeed, Kucerovsky and Ng produced in \cite{KucNg:JOT} an example of a unital separable simple \Cs{} with the Corona Factorization Property whose $K_0$-group has perforation. Hence it cannot absorb the Jiang-Su algebra, cf.\ \cite{GongJiangSu:Z}. We do not know if the central sequence algebra of such a \Cs{} has a character. 
\end{rems}

\noindent The lemma below is an easy consequence of associativity of the maximal tensor product. 

\begin{lemma} \label{lm:associative} Let $A,B,N$ be \Cs s with $N$ is nuclear. Then
$$(A \otmax B) \otmin N \; \cong \; A \otmax (B \otmin N).$$
\end{lemma}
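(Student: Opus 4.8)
The plan is to reduce everything to two standard facts. The first is the defining property of nuclearity: $N$ is nuclear precisely when the minimal and maximal $C^*$-norms on the algebraic tensor product $X \odot N$ coincide for \emph{every} \Cs{} $X$, so that $X \otmin N = X \otmax N$ as completions. The second is associativity of the maximal tensor product, i.e.\ the canonical $^*$-isomorphism $(A \otmax B) \otmax N \cong A \otmax (B \otmax N)$ determined by $a \otimes b \otimes c \mapsto a \otimes b \otimes c$ on elementary tensors.

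First I would apply nuclearity of $N$, taking $X = A \otmax B$, to identify the left-hand side $(A \otmax B) \otmin N$ with $(A \otmax B) \otmax N$. Next I would invoke associativity of the maximal tensor product to rewrite this as $A \otmax (B \otmax N)$. Finally I would apply nuclearity of $N$ once more, now with $X = B$, to replace $B \otmax N$ by $B \otmin N$, arriving at $A \otmax (B \otmin N)$. Composing these three identifications yields the claimed isomorphism
$$(A \otmax B) \otmin N \; \cong \; A \otmax (B \otmin N).$$

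There is no serious obstacle; the only point deserving a moment's care is that the three identifications are mutually compatible. Each of them is implemented by the identity map on the underlying algebraic tensor product $A \odot B \odot N$: the first and third steps merely record that two a priori distinct completion norms in fact agree (by nuclearity of $N$), while the middle step is the canonical associativity isomorphism, which likewise fixes elementary tensors. Hence the composite is the natural map $a \otimes b \otimes c \mapsto a \otimes b \otimes c$, and no genuine compatibility issue arises.
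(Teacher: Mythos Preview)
Your proof is correct and is precisely the argument the paper has in mind: the paper gives no detailed proof, merely noting that the lemma ``is an easy consequence of associativity of the maximal tensor product,'' which is exactly the core step you use after invoking nuclearity of $N$ to pass between $\otmin$ and $\otmax$.
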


\begin{ex} Let $W$ be the (nuclear) \Cs{} from Remark~\ref{rem:CFP} (i) and set $A = C^*_\mathrm{red}(\mathbb{F}_2) \otimes W$. Then $A$ is a simple, unital, separable, exact, purely infinite \Cs{} which does not absorb tensorially any non-elementary nuclear \Cs. In particular, $A$ does not absorb the Jiang-Su algebra nor the Cuntz algebra $\mathcal{O}_\infty$, and $F(A)$ does not contain any unital subhomogeneous without characters. 

Let us verify that $A$ has the stipulated properties. 
Simplicity of $A$ follows from Takesaki's theorem (because $W$ and $C^*_\mathrm{red}(\mathbb{F}_2)$ are simple); and $A$ is exact because both $W$ and $C^*_\mathrm{red}(\mathbb{F}_2)$ are exact. Since $A$ by construction is non-prime and not stably finite it follows from \cite[Theorem 4.1.10 (ii)]{Ror:Encyc} (a result of the first named author) that $A$ is purely infinite. 

Let us show that $A$ cannot be isomorphic $A \otimes B$ for any non-elementary nuclear \Cs{} $B$. Suppose, to reach a contradiction, that $A \cong A \otimes B$ for such a \Cs{} $B$. Then $B$ must be unital, separable, exact and simple. Applying \cite[Theorem~4.1.10~(ii)]{Ror:Encyc} again we see that $W \otimes B$ is purely infinite. Hence $W \otimes B$ is simple, separable, unital, nuclear and purely infinite, and therefore $W \otimes B \cong W \otimes B \otimes \mathcal{O}_\infty$ by \cite{KirPhi:classI}. As
$$A \cong A \otimes B \cong C^*_\mathrm{red}(\mathbb{F}_2) \otimes (W \otimes B),$$
we conclude that $A \cong A \otimes {\mathcal{O}}_\infty$. Using this identity and Lemma \ref{lm:associative} above twice we get
$$\big(C^*_\mathrm{red}(\mathbb{F}_2)  \otmax A\big) \otmin {\mathcal{O}}_\infty \cong C^*_\mathrm{red}(\mathbb{F}_2) \otmax A \cong \big(C^*_\mathrm{red}(\mathbb{F}_2)  \otmax C^*_\mathrm{red}(\mathbb{F}_2) \big) \otmin W,$$
so the \Cs{} on the right-hand side is (strongly) purely infinite. 

Akemann and Ostrand proved in \cite{AkeOst:F_2-tensor} that the \Cs{} of compact operators, $\mathcal{K}(\ell^2(\mathbb{F}_2))$, is contained in the image of the
 "left-right" regular representation of $C^*_\mathrm{red}(\mathbb{F}_2)  \otmax C^*_\mathrm{red}(\mathbb{F}_2)$ on $\ell^2(\mathbb{F}_2)$. Hence $\mathcal{K}(\ell^2(\mathbb{F}_2))$ is isomorphic to $J/I$ for some closed two-sided ideals $I \subset J$ in  $C^*_\mathrm{red}(\mathbb{F}_2)  \otmax C^*_\mathrm{red}(\mathbb{F}_2)$. Now, $I \otimes W \subset J \otimes W$ are closed two-sided ideals in the purely infinite \Cs{}  $\big(C^*_\mathrm{red}(\mathbb{F}_2)  \otmax C^*_\mathrm{red}(\mathbb{F}_2) \big) \otmin W$. Being purely infinite passes to ideals and to quotients, so 
$$\mathcal{K}(\ell^2(\mathbb{F}_2)) \otimes W \cong (J \otimes W) / (I \otimes W)$$
is purely infinite. This contradicts the fact that $W$ has a non-zero finite projection. We  conclude that $A$ does not absorb tensorially any non-elementary  nuclear \Cs.

We know from Remark~\ref{rem:CFP} (i) that $F(W)$ has a character. The von Neumann central sequence algebra, $\mathcal{L}(\mathbb{F}_2)^\omega \cap \mathcal{L}(\mathbb{F}_2)'$, is abelian (and hence has a character) because $\mathcal{L}(\mathbb{F}_2)$ is not a McDuff factor. Moreover, it is a quotient of $F(C^*_\mathrm{red}(\mathbb{F}_2))$, cf.\ \cite[Theorem~3.3]{KirRor:Central-sequence}, so $F(C^*_\mathrm{red}(\mathbb{F}_2))$  also has a character. In other words, $A = C^*_\mathrm{red}(\mathbb{F}_2) \otimes W$ is the tensor product of two \Cs s each of whose central sequence algebras has a character. We do not know if  the central sequence algebra, $F(A)$, itself has a character. If it does not, then it will serve as a counterexample to Questions \ref{q3}, \ref{q1}, \ref{q2} and \ref{q2'}. 

Let us finally remark that if $A$ and $B$ are unital \Cs s both admitting a character, then $A \otmin B$ has a character. It is not true that $F(A \otmin B)$ has a character if $F(A)$ and $F(B)$ both have a character. Take for example $A = B = W$, where $W$ is as above. Then $F(W)$ has a character, but $W \otimes W$ is purely infinite (by \cite[Theorem 4.1.10 (ii)]{Ror:Encyc}) and is thus simple, separable, unital, nuclear and purely infinite, whence $F(W \otimes W)$ itself is simple and purely infinite, cf.\ \cite{KirPhi:classI}, and therefore characterless.
\end{ex}


\section{The splitting property}

\noindent In the previous sections we have discussed when the central sequence algebra $F(A)$ of a (unital) \Cs{} $A$ has a character. The absence of a character can be viewed as a weak divisibility property of $F(A)$ (in fact, the weakest). We shall discuss divisibility properties for \Cs s more formally at the beginning of the next section; and in Section \ref{sec:Z} we shall show that the Jiang-Su algebra embeds into $F(A)$ if (and only if) $F(A)$ has a specific, rather strong, divisibility property. Whereas the various divisibility properties under consideration really are different, they may agree for \Cs s of the form $\otmaxinfty D$, where $D$ is any unital \Cs, or for the central sequence algebra $F(A)$ of any (unital) \Cs{} $A$. 

In this section we investigate a divisibility property which is (slightly) stronger than absence of characters. Recall that an element in a \Cs{} is said to be \emph{full} if it is not contained in any proper closed two-sided ideal. 

\begin{definition}
A \Cs{} $A$ is said to have the \emph{$2$-splitting property} if there exist positive full elements $a,b \in A$ such that $ab = 0$.  
\end{definition}

\noindent
We shall also need the following:  
\begin{definition} \label{def:purely-full}
An element $a$ in a \Cs{} $A$ is said to be \emph{purely full} if $a$ is a positive contraction such that $(a-\ep)_+$ is full for all $\ep \in [0,1)$. 
\end{definition}

\noindent Every simple unital \Cs{} other than $\C$ has the $2$-splitting property. Every non-zero element in a simple \Cs{} is full, and every positive element of norm 1 in a simple \Cs{} is purely full.

For each $\ep >0$ consider the two continuous functions $f_\ep, g_\ep \colon \R^+ \to [0,1]$ given by
\begin{equation} \label{eq:fg}
f_\ep(t) = \begin{cases} \ep^{-1} t, & 0 \le t \le \ep \\ 1, & t \ge \ep \end{cases}, \qquad g_\ep(t) = 1-f_\ep(t), \qquad t \in \R^+.
\end{equation}
If $a$ is a positive full element in a unital \Cs{} $A$, then $(a-\ep)_+$ is full for some $\ep >0$, and the element $f_\ep(a)$ is purely full for any such $\ep$. The hereditary subalgebra $\overline{aAa}$ of any full positive element $a$ therefore contains a purely full positive element. This proves the following:

\begin{lemma} \label{lm:splitting-cf}
A unital \Cs{} has the $2$-splitting property if and only if it  contains two purely full pairwise orthogonal elements. 
\end{lemma}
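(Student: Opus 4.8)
The plan is to prove both implications directly, leaning on the functional-calculus observations already recorded in the paragraph immediately preceding the statement (the definition of $f_\ep$, $g_\ep$ in \eqref{eq:fg} and the remark that $f_\ep(a)$ is purely full whenever $(a-\ep)_+$ is full).

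For the ``if'' direction I would start from two purely full pairwise orthogonal elements $a, b \in A$ and simply unwind the definitions. By Definition~\ref{def:purely-full} applied with $\ep = 0$, the element $a = (a-0)_+$ is full, and likewise $b = (b-0)_+$ is full. Since $a$ and $b$ are positive and orthogonality gives $ab = 0$, they are precisely a pair of positive full elements with vanishing product, which is the $2$-splitting property.

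For the ``only if'' direction I would begin with positive full elements $a, b \in A$ satisfying $ab = 0$. By the remark just before the lemma there exist $\ep, \delta > 0$ such that $(a-\ep)_+$ and $(b-\delta)_+$ are full, and for any such $\ep, \delta$ the elements $f_\ep(a)$ and $f_\delta(b)$ are purely full. It then only remains to verify that these two purely full elements are orthogonal, i.e.\ $f_\ep(a)\, f_\delta(b) = 0$.

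This orthogonality is the only step requiring a word of justification, and it is the closest thing to an obstacle. I would deduce it from the standard fact that for positive elements $a, b$ with $ab = 0$ one has $g(a)\, h(b) = 0$ for all continuous functions $g, h$ vanishing at $0$: from $ab = 0$ one gets $ba = 0$ by taking adjoints, hence $a^i b^j = 0$ for all $i, j \ge 1$, so the identity holds for polynomials without constant term and then passes to the norm closure by Stone--Weierstrass. Since $f_\ep(0) = f_\delta(0) = 0$, this applies directly and gives $f_\ep(a)\, f_\delta(b) = 0$, so $f_\ep(a)$ and $f_\delta(b)$ are two purely full pairwise orthogonal elements, completing the proof.
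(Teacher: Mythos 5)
Your proof is correct and follows essentially the same route as the paper, whose own argument is just the paragraph preceding the lemma: replace full orthogonal elements $a,b$ by the purely full elements $f_\ep(a), f_\delta(b)$, which remain orthogonal because $f_\ep$ and $f_\delta$ vanish at $0$ (the paper phrases this via the hereditary subalgebras $\overline{aAa}\perp\overline{bAb}$, you via polynomials without constant term --- the same fact). Your explicit verification of the orthogonality step is a harmless elaboration of what the paper leaves implicit.
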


\noindent
Let us describe some properties of purely full elements.

\begin{lemma} \label{lm:comp-full}
Let $a$ be a positive contraction in a unital \Cs{} $A$. The following conditions are equivalent:
\begin{enumerate}
\item $a$ is purely full. \vspace{.1cm}
\item Each $b \in A$ with $\|a-b\|< 1$ is full.  \vspace{.1cm}
\item $\|\pi(a)\|=1$ for each non-zero \sh{} $\pi$ from $A$ into another \Cs{} $B$.
\end{enumerate}
\end{lemma}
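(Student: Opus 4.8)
The plan is to run a cyclic argument (i) $\Rightarrow$ (iii) $\Rightarrow$ (ii) $\Rightarrow$ (i), taking condition (iii) as the pivot, since it is the one that communicates most directly both with fullness (through kernels of quotient maps) and with the metric condition (ii). Two elementary facts will be used throughout: a positive element is full exactly when it lies in no proper closed two-sided ideal, equivalently when it is not killed by any non-zero \sh; and, for a positive contraction $a$, the functional-calculus observations that $\|a-(a-\ep)_+\| \le \ep$ (since $t \mapsto t-(t-\ep)_+ = \min(t,\ep)$ on $[0,1]$) and that $\pi((a-\ep)_+) = (\pi(a)-\ep)_+$ for every \sh{} $\pi$.

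For (i) $\Rightarrow$ (iii), I would argue by contradiction: given a non-zero \sh{} $\pi$ with $\|\pi(a)\| < 1$, choose $\ep \in (\|\pi(a)\|,1)$, so that $(\pi(a)-\ep)_+ = 0$ and hence $(a-\ep)_+ \in \Ker(\pi)$. As $\pi \ne 0$, its kernel is a proper ideal, which contradicts the fullness of $(a-\ep)_+$ guaranteed by purely fullness. For (iii) $\Rightarrow$ (ii), suppose $b$ with $\|a-b\| < 1$ is not full, so $b$ lies in some proper closed two-sided ideal $I$; then the quotient map $\pi \colon A \to A/I$ is non-zero and annihilates $b$, whence $\|\pi(a)\| = \|\pi(a-b)\| \le \|a-b\| < 1$, contradicting (iii). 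Finally, (ii) $\Rightarrow$ (i) is immediate: for each $\ep \in [0,1)$ the element $(a-\ep)_+$ satisfies $\|a-(a-\ep)_+\| \le \ep < 1$, so (ii) forces it to be full, and this is precisely the definition of $a$ being purely full.

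I do not expect a genuine obstacle here; the content is bookkeeping with ideals, quotients, and functional calculus on $[0,1]$. The only points I would state explicitly, to keep the quotient-norm manipulations honest, are that $\Ker(\pi)$ is a proper ideal exactly when $\pi \ne 0$ (using that $A$ is unital, so that a proper ideal yields a non-zero quotient $A/I$), and the standard identity $\|\pi(a)\| = \inf_{c \in \Ker(\pi)} \|a-c\|$ underlying the estimate in (iii) $\Rightarrow$ (ii). With these in hand the three implications close the cycle and give the asserted equivalence of (i), (ii) and (iii).
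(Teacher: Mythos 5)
Your proof is correct and follows essentially the same route as the paper: the same cycle (i) $\Rightarrow$ (iii) $\Rightarrow$ (ii) $\Rightarrow$ (i), with (i) $\Rightarrow$ (iii) via $(a-\ep)_+ \in \Ker(\pi)$ (the paper takes $\ep = \|\pi(a)\|$ directly rather than arguing by contradiction, but this is cosmetic), (iii) $\Rightarrow$ (ii) via the quotient by the ideal generated by $b$, and (ii) $\Rightarrow$ (i) from $\|a-(a-\ep)_+\|\le\ep$. No gaps.
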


\begin{proof} (i) $\Rightarrow$ (iii). Let $\pi \colon A \to B$ be a non-zero \sh, and put $\ep= \|\pi(a)\|$. Then $0 = (\pi(a)-\ep)_+ = \pi\big((a-\ep)_+\big)$. Hence $(a-\ep)_+$ is not full. Thus $\ep=1$.

(iii) $\Rightarrow$ (ii). Let $b \in A$ with $\|a-b\|<1$ be given, let $I$ be the closed two-sided ideal in $A$ generated by $b$, and let $\pi \colon A \to A/I$ denote the quotient mapping. Then $\|\pi(a)\| = \|a+I\| \le \|a-b\| < 1$. Thus $\pi$ must be the zero mapping, so $I=A$, whence $b$ is full in $A$.

(ii) $\Rightarrow$ (i). This follows from the fact that $\|a-(a-\ep)_+\| \le \ep$ for all $\ep \ge 0$.
\end{proof}

\noindent The negation of having the $2$-splitting property can be reformulated in several ways:

\begin{prop} \label{prop:2-split}
The following conditions are equivalent for any unital \Cs{} $A$.
\begin{enumerate}
\item $A$ does not have the $2$-splitting property.  \vspace{.1cm}
\item Each positive full element in $A$  is invertible in some non-zero quotient of $A$. \vspace{.1cm}
\item For each purely full element $a$ in $A$ there exist a (non-zero) unital \Cs{} $B$ and a unital \sh{} $\pi \colon A \to B$ such that $\pi(a)=1_B$. \vspace{.1cm}
\item For each pair of purely full elements $a,b \in A$ there is a state $\rho$ on $A$ such that $\rho(a) = \rho(b) = 1$.  \vspace{.1cm}
\item $\|ab\|=1$ for each pair of purely full elements $a,b \in A$.
\end{enumerate}
\end{prop}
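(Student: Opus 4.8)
The plan is to prove Proposition \ref{prop:2-split} by establishing a cycle of implications (i) $\Rightarrow$ (ii) $\Rightarrow$ (iii) $\Rightarrow$ (iv) $\Rightarrow$ (v) $\Rightarrow$ (i), making heavy use of Lemmas \ref{lm:splitting-cf} and \ref{lm:comp-full} which I would treat as the main technical input already available. The key conceptual point throughout is Lemma \ref{lm:comp-full}: a positive contraction $a$ is purely full if and only if $\|\pi(a)\| = 1$ for every non-zero $^*$-homomorphism $\pi$, and I expect to invoke this repeatedly to translate between fullness statements and statements about quotients.

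For (i) $\Rightarrow$ (ii), I would argue contrapositively: if some positive full element $a$ is non-invertible in every non-zero quotient, I want to produce two orthogonal purely full elements. The idea is that if $a$ is full and fails to be invertible in every quotient, then $0$ lies in the spectrum of the image of $a$ in each quotient, which should let me split off an orthogonal purely full piece. More concretely, I would pass to $f_\ep(a)$ and a complementary function $g_\delta(a)$ (using the functions from \eqref{eq:fg}), and show that both pieces remain full in every quotient — here the hypothesis that $a$ is not invertible in any quotient guarantees the ``lower'' spectral piece is full, while fullness of $a$ guarantees the ``upper'' piece is full — then apply Lemma \ref{lm:splitting-cf}. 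The implication (ii) $\Rightarrow$ (iii) should be quick: given a purely full $a$, fullness and non-invertibility-in-some-quotient are nearly negations, so the contrapositive of (ii) applied to $a$ forces $a$ to be invertible in a quotient; combined with $(a-\ep)_+$ being full for all $\ep<1$ (purely full), one sees the image of $a$ must actually equal the unit, giving the unital quotient $\pi$ with $\pi(a) = 1_B$.

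The implications (iii) $\Rightarrow$ (iv) and (iv) $\Rightarrow$ (v) I expect to be routine. For (iii) $\Rightarrow$ (iv), given purely full $a,b$, apply (iii) to $a$ to get a unital quotient $\pi \colon A \to B$ with $\pi(a) = 1_B$; since $b$ is purely full, $\pi(b)$ is purely full in $B$ by Lemma \ref{lm:comp-full}(iii) (as $\|\pi(b)\| = 1$ and the same holds under further quotients), so $\|\pi(b)\|=1$, and composing with a state $\sigma$ on $B$ achieving $\sigma(\pi(b)) = 1$ yields a state $\rho = \sigma \circ \pi$ with $\rho(a) = \rho(b) = 1$ (using that a state attaining value $1$ on the contraction $1_B = \pi(a)$ is automatic). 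For (iv) $\Rightarrow$ (v), a state $\rho$ with $\rho(a)=\rho(b)=1$ forces $a$ and $b$ to act like the identity on the cyclic GNS vector $\xi_\rho$, so $ab\xi_\rho = \xi_\rho$ and $\|ab\| \ge |\langle ab\xi_\rho, \xi_\rho\rangle| = 1$, while $\|ab\|\le 1$ since both are contractions.

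Finally, for (v) $\Rightarrow$ (i), I argue contrapositively using Lemma \ref{lm:splitting-cf}: if $A$ \emph{has} the $2$-splitting property, then it contains orthogonal purely full elements $a,b$ with $ab=0$, so $\|ab\|=0 \ne 1$, violating (v). The main obstacle I anticipate is the implication (i) $\Rightarrow$ (ii) (equivalently its contrapositive), where one must manufacture two orthogonal purely full elements from a single full element that is non-invertible in every quotient; the delicate part is verifying that \emph{both} the low-spectrum and high-spectrum functional-calculus pieces are genuinely full (not merely non-zero) in $A$, which requires carefully combining the global fullness of $a$ with the everywhere-non-invertibility hypothesis via Lemma \ref{lm:comp-full}. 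The remaining implications are essentially bookkeeping with the GNS construction and functional calculus.
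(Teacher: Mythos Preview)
Most of your cycle is sound: your (i) $\Rightarrow$ (ii) by contrapositive works (if a full positive $a$ is non-invertible in every non-zero quotient, pick $\ep>0$ with $(a-\ep)_+$ full; then for $0<\delta<\ep$ the element $g_\delta(a)$ must also be full, since otherwise $a$ becomes invertible in the quotient by the ideal $g_\delta(a)$ generates, and $g_\delta(a)\perp(a-\ep)_+$), and your (iii) $\Rightarrow$ (iv) $\Rightarrow$ (v) $\Rightarrow$ (i) agree with the paper (your GNS phrasing of (iv) $\Rightarrow$ (v) is equivalent to the paper's multiplicative-domain argument).

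The gap is in (ii) $\Rightarrow$ (iii). Applying (ii) to a purely full $a$ yields a non-zero quotient $\pi\colon A\to B$ in which $\pi(a)$ is \emph{invertible}; combining this with pure fullness gives only $\|\pi(a)\|=1$ via Lemma~\ref{lm:comp-full}, and invertibility together with norm $1$ does \emph{not} force $\pi(a)=1_B$. Concretely, $A=\C\oplus M_2$ satisfies (i)--(v), the element $a=(1,\diag(1,\tfrac12))$ is purely full and already invertible in $A$, yet neither $a$ nor its image $\diag(1,\tfrac12)$ in the quotient $M_2$ equals the unit. The repair is to apply (ii) in contrapositive form to $g_\ep(a)$ rather than to $a$: since $g_\ep(a)$ is orthogonal to the full element $(a-\ep)_+$, it is a zero-divisor and hence non-invertible in every non-zero quotient, so by (ii) it is not full; the ideals $I_\ep$ generated by $g_\ep(a)$ are therefore proper, and in the quotient by $I=\overline{\bigcup_{\ep<1} I_\ep}$ one has $a+I=1$. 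This is exactly the paper's argument for (i) $\Rightarrow$ (iii), and indeed the paper bypasses your problematic step by going from (i) to (iii) directly and deducing (ii) from (iii) afterwards.
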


\begin{proof} (i) $\Rightarrow$ (iii). Let $a$ be a purely full element in $A$, and let $\ep \in [0,1)$ be given. Then $(a-\ep)_+$ is positive and full in $A$. Let $g_\ep \colon \R^+ \to [0,1]$ be as defined in \eqref{eq:fg} above, and let $I_\ep$ be the closed two-sided ideal in $A$ generated by $g_\ep(a)$. As $(a-\ep)_+ \perp g_\ep(a)$ and as (i) holds, we conclude that $I_\ep \ne A$.  Put
$$I = \overline{\bigcup_{\ep < 1} I_\ep}.$$
Then $I$ is a proper ideal in $A$ (because $A$ is unital). Now, $0 = g_\ep(a)+I = g_\ep(a+I)$ in $A/I$ for each $\ep \in [0,1)$, which implies that $a+I$ is the unit of $A/I$.

(iii) $\Rightarrow$ (iv). Let  $\pi \colon A \to B$ be as in (iii) with respect to $a$. Then
$\|\pi(b)\|=1$ by Lemma~\ref{lm:comp-full} (iii). Let $\sigma$ be a state on $B$ such that $\sigma(\pi(b)) = 1$, and let $\rho = \sigma \circ \pi$. Then $\rho$ is a state on $A$, and $\rho(a) = \rho(b) = 1$. 

(iv) $\Rightarrow$ (v). Let $\rho$ be as in (iv). Then $a$ and $b$ are in the multiplicative domain of $\rho$, so $\rho(ab) = \rho(a)\rho(b) = 1$. 

(v) $\Rightarrow$ (i) is trivial, cf.\ Lemma~\ref{lm:splitting-cf}.

(iii) $\Rightarrow$ (ii). Let $a$ be a positive full element in $A$, and let $\ep > 0$ be such that $(a-\ep)_+$  is full. Arguing as below Definition~\ref{def:purely-full} we find that $f_\ep(a)$ is purely full, so $f_\ep(\pi(a)) = \pi(f_\ep(a)) = 1_B$ for some unital \sh{} $\pi \colon A \to B$. This entails that $\pi(a)$ is invertible in $B$.

(ii) $\Rightarrow$ (i). Let $a$ and $b$ be positive full elements in $A$; and let $\pi \colon A \to B$ be a unital \sh{} onto a (non-zero) unital \Cs{} $B$ such that $\pi(a)$ is invertible. Then $\pi(b)$ is non-zero because $b$ is full, so $0 \ne \pi(a) \pi(b) = \pi(ab)$, which shows that $ab \ne 0$. 
\end{proof}

\begin{prop} \label{prop:lift-splitting}
Let $A$ be a separable, simple, unital \Cs{} with $T(A) \ne \emptyset$ and with property (SI). Let $\pi$ denote the quotient mapping $F(A) \to F(A)/(F(A) \cap J(A))$.
\begin{enumerate}
\item An element $a \in A$ is full in $F(A)$ if $\pi(a)$ is full in $F(A)/(F(A) \cap J(A))$. \vspace{.1cm}
\item $F(A)$ has the $2$-splitting property if and only if $F(A)/(F(A) \cap J(A))$ has the $2$-splitting property.
\end{enumerate}
\end{prop}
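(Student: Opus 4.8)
The plan is to prove (i) first, since (ii) then follows by combining it with a standard orthogonal lifting. Throughout write $J_0 = F(A) \cap J(A)$ and $Q = F(A)/J_0$, and note that $Q \neq 0$: since $T(A) \neq \emptyset$ there is a trace on $A_\omega$ not killing $1$, so $1 \notin J(A)$ and $J_0$ is a proper ideal. To prove (i), let $a \in F(A)$ with $\pi(a)$ full in $Q$, and let $I$ be the closed two-sided ideal of $F(A)$ generated by $a$. Because $\pi$ is surjective and the sum of two closed ideals of a \Cs{} is again closed, $\pi(I)$ is a closed ideal, namely the one generated by $\pi(a)$, hence $\pi(I) = Q$; equivalently, $I + J_0 = F(A)$. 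The whole content of (i) is then to absorb the $J_0$-part back into $I$, and this is exactly where property (SI) must be used.

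Concretely, the key step is to produce a positive contraction $e \in J_0$ with $1 - e \in I$. Since $F(A)/I$ is unital and $\pi_I(J_0) = F(A)/I$, where $\pi_I \colon F(A) \to F(A)/I$ is the quotient map, I would lift the unit of $F(A)/I$ to some $w \in J_0$ and put $e = h(w^*w)$ with $h(t) = \min(t,1)$; as $J_0$ is a closed ideal and $h(0)=0$, $e$ is a positive contraction in $J_0$ with $\pi_I(e) = 1$, that is, $1 - e \in I$. Now apply property (SI) with $f = 1-e$: it is a positive contraction, and since $e \in J(A)$ each element $1 - (1-e)^k$ lies in $J(A)$ (it vanishes at $0$ under functional calculus in $e$), so $\sup_k \|1 - f^k\|_2 = 0 < 1$. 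Thus (SI) yields $s \in F(A)$ with $(1-e)s = s$ and $s^*s = e$. Because $1 - e \in I$ and $I$ is an ideal, $s = (1-e)s \in I$, hence $e = s^*s \in I$, and therefore $1 = (1-e) + e \in I$; that is, $a$ is full in $F(A)$. This is the same maneuver already used in the proof of Proposition~\ref{prop:char-equiv}.

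For (ii), the forward implication is formal. If $a, b \in F(A)$ are positive, full and orthogonal, then neither lies in the proper ideal $J_0$ (a full element generates all of $F(A)$), so $\pi(a), \pi(b)$ are nonzero; being images of full elements under a surjection they are full, they are positive, and $\pi(a)\pi(b) = \pi(ab) = 0$, so $Q$ has the $2$-splitting property. For the reverse implication, take positive full orthogonal elements $\bar a, \bar b \in Q$. A standard projectivity argument (orthogonal positive elements lift to orthogonal positive elements, as in the lifting used in Lemma~\ref{lm:dimensiondrop}~(ii)) provides positive $a, b \in F(A)$ with $\pi(a) = \bar a$, $\pi(b) = \bar b$ and $ab = 0$. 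Part (i) then shows that $a$ and $b$ are full in $F(A)$, so $F(A)$ has the $2$-splitting property.

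The main obstacle is part (i): transporting fullness \emph{upward} from the tracial quotient $Q$ through the ideal $J_0$. A generic quotient map certainly does not preserve fullness in this direction, and the real point is that property (SI) lets one swallow the $J(A)$-part, the element $s$ with $(1-e)s = s$ and $s^*s = e$ being precisely what upgrades "$1-e$ is full" to "$1$ is full". The remaining ingredients are routine: the equivalence of $\pi(I) = Q$ with $I + J_0 = F(A)$ (via closedness of sums of closed ideals), the construction of the positive contraction $e$ lifting the unit of $F(A)/I$, and the classical orthogonal lifting used in (ii).
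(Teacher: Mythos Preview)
Your proof is correct, and part (ii) matches the paper's argument essentially verbatim. For part (i), however, you take a genuinely different route. The paper replaces $a$ by $a^*a$, chooses $x_1,\dots,x_n$ with $\sum x_j^*\pi(a)x_j = 1$, sets $\ep = \tfrac12\big(\sum\|x_j\|^2\big)^{-1}$ and $b = f(a)$ for a suitable cut-off function $f$, and then shows $\tau(b^m) \ge \ep$ for all $m$ and all $\tau \in T_\omega(A)$; invoking the reformulation of (SI) in Proposition~\ref{prop:SI}, this yields $e \precsim b$ for \emph{every} positive contraction $e \in J_0$, hence $J_0 \subseteq I$. Your argument instead produces a \emph{single} positive contraction $e \in J_0$ with $1-e \in I$ and applies the defining form of (SI) with $f = 1-e$ (using that $1-(1-e)^k \in J(A)$, so $\|1-f^k\|_2 = 0$), exactly replicating the trick from Proposition~\ref{prop:char-equiv}. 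What the paper's approach buys is the slightly stronger intermediate conclusion $J_0 \subseteq I$ and a direct link to the tracial characterization of (SI); what your approach buys is economy---you avoid Proposition~\ref{prop:SI} altogether and reuse an argument already present in the paper.
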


\begin{proof} (i). Let $a \in A$, suppose that $\pi(a)$ is full in $F(A)/(F(A) \cap J(A))$, and let $I$ be the closed two-sided ideal in $F(A)$ generated by $a$. Upon replacing $a$ by $a^*a \in I$ we may assume that $a$ is positive. To show that $a$ is full in $F(A)$ it suffices to show that $F(A) \cap J(A) \subseteq I$. 

Find elements $x_1, \dots, x_n \in F(A)/(F(A) \cap J(A))$ witnessing that $\pi(a)$ is full, i.e., $1 = \sum_{j=1}^n x_j^*\pi(a)x_j$, and put $\ep = \frac12 \, \big(\sum_{j=1}^n \|x_j\|^2\big)^{-1}$.

Let $f \colon [0,1] \to [0,1]$ be a continuous function such that $f(0) = 0$ and $f(t) = 1$ for $t \ge \ep$, and set $b = f(a)$. Then $b \in I$. Moroever, $\pi(b)^m \ge (\pi(a)-\ep)_+$ for all $m \ge 1$, so
$$\sum_{j=1}^n x_j^* \pi(b)^m x_j \; \ge \; \sum_{j=1}^n x_j^*\big(\pi(a)-\ep\big)_+ x_j \; \ge \; \frac12 \cdot 1$$
for all $m \ge 1$. This entails that $\tau(\pi(b)^m) \ge \ep$ for all $m \ge 1$ and for all tracial states $\tau$ on $F(A)/(F(A) \cap J(A))$. In particular, $\tau(b^m) \ge \ep$ for all $m \ge 1$ and for all tracial states $\tau$ on $F(A)$. Since $A$ has property (SI) we can use Lemma~\ref{prop:SI} to conclude that $e \precsim b$ for all
positive contractions $e \in F(A) \cap J(A)$. This of course shows that $F(A) \cap J(A) \subseteq I$. 

(ii). Suppose that $F(A)/(F(A) \cap J(A))$ has the $2$-splitting property and let $b_1,b_2$ be two full positive elements of $F(A)/(F(A) \cap J(A))$ such that $b_1b_2=0$. Lift $b_1,b_2$ to positive elements $a_1,a_2 \in F(A)$ such that $a_1a_2=0$. Then $a_1,a_2$ are automatically full in $F(A)$ by (i), so $F(A)$ has the $2$-splitting property.
\end{proof}

\begin{ex} \label{ex:splitting}
Suppose that $A$ is a unital \Cs, which is a continuous field over a Hausdorff space $X$ with fibers, $A_x$, isomorphic to $M_n$ for some fixed $n \ge 2$ for all $x \in X$. Then $A$ is of the form $p(C(X) \otimes \cK)p$ for some projection $p \in C(X) \otimes \cK$ of dimension $n$. The primitive ideal space of $A$ is equal to $X$. As no point in $X$ can be the kernel of a character, we see that $A$ has no characters. Denote by $\pi_x \colon A \to A_x \cong M_n$ the fibre map over the point $x \in X$.

An element $a \in A$ is full in $A$ if and only if $\pi_x(a) \ne 0$ for all $x \in X$. A positive element $a$ is purely full in $A$ if and only if $\|\pi_x(a)\| = 1$ for all $x \in X$, cf.\ Lemma~\ref{lm:comp-full}. Hence, by Proposition~\ref{prop:2-split}, $A$ fails to have the $2$-splitting property if and only if whenever $a \in A$ is a positive such that $\|\pi_x(a)\| = 1$ for all $x \in X$, then there exists $x \in X$ such that $\pi_x(a) = 1$. 

As in \cite[Remark 5.8]{RobRor:divisible} consider the case where $X = S^4$ and where $p \in C(S^4) \otimes \cK$ is a $2$-dimensional projection with non-trivial Euler class. Then $A = p(C(S^4) \otimes \cK)p$ does not have the $2$-splitting property. It follows from the main theorem of \cite{DadToms:Z} that the Jiang-Su algebra embeds into $\bigotimes^\infty A$. This in particular implies that some finite tensor power $A \otimes A \otimes \cdots \otimes A$ has the $2$-splitting property. 

We can see this more directly as follows. The projection $p \otimes p \otimes p \in A \otimes A \otimes A$ has a non-zero trivial subprojection $e$ by \cite[9.1.2]{Hus:fibre}, because $$\dim(p \otimes p \otimes p) = 8 > \lceil(\dim((S^4)^3)-1)/2\rceil.$$ Hence $e$ and $p \otimes p \otimes p - e$ are pairwise orthogonal full elements in $A \otimes A \otimes A$.
\end{ex}


\section{Divisibility and comparability properties}

\noindent We have discussed properties of a unital \Cs{} $A$ for which the central sequence algebra $F(A)$ does not have a character, and we raised the question if this condition will imply that the Jiang-Su algebra embeds unitally into $F(A)$, so that $A$ absorbs the Jiang-Su algebra if $A$, in addition, is separable. Absence of characters of a unital \Cs{} is a  \emph{weak divisibility} property of a \Cs. This property was considered in \cite{RobRor:divisible} and shown, in the language of \cite{RobRor:divisible}, to be equivalent to the condition $\wDiv_2(A) < \infty$. We remind the reader that $\wDiv_m(A) \le n$ if and only if there exist $x_1,x_2, \dots, x_n \in \Cu(A)$ such that
$$mx_j \le \langle 1_A \rangle \le x_1+x_2+ \cdots + x_n$$
holds for $j=1, 2, \dots, n$.

 In the previous section we discussed the stronger divisibility property, the so-called $2$-splitting property. It was also considered in \cite{RobRor:divisible}, and shown to hold for a unital \Cs{} $A$ if and only if $\Dec_2(A) < \infty$.  By definition, $\Dec_m(A) \le n$ if and only if there exist elements $x_1,x_2, \dots, x_
m \in \Cu(A)$ such that
$$ x_1+x_2+ \cdots + x_m \le \langle 1_A \rangle \le nx_j$$
holds for all $j=1,3, \dots , m$. 

A still stronger divisibility property for a unital \Cs{} $A$, called the \emph{Global Glimm Halving property}, is the existence of a \sh{} $CM_2 \to A$ whose image is full in $A$. (This, again, is equivalent to the existence of two pairwise orthogonal and full positive elements $a$ and $b$ in $A$ which are \emph{equivalent} in the sense that $a = x^*x$ and $b = xx^*$ for some $x \in A$.) It was shown in \cite{RobRor:divisible}  that $A$ has the Global Glimm Halving property if and only if $\Div_2(A) < \infty$. By definition, $\Div_m(A) \le n$  if and only if there exists an element $x \in \Cu(A)$ such that $mx \le \langle 1_A \rangle \le nx$.

In general one has
$$\Div_m(A) < \infty \implies \Dec_m(A) < \infty \implies \wDiv_m(A) < \infty$$
for all $m$, and $\Div_m(A) < \infty$ implies $\Div_k(A) < \infty$ when $m \ge k$ (and likewise for "$\Dec$" and "$\wDiv$"). The reverse implication do not hold in general, see \cite{RobRor:divisible}. For the \Cs s of interest in this paper we can say more:

\begin{prop} \label{prop:Div-equiv} Let $A$ be a unital \Cs. Then the following are equivalent:
\begin{enumerate}
\item $F(A)$ has the $2$-splitting property. \vspace{.1cm}
\item $\Dec_2(F(A)) < \infty$.  \vspace{.1cm}
\item $\Dec_m(F(A)) < \infty$ for all $m \ge 2$.  \vspace{.1cm}
\item $\Div_m(F(A)) < \infty$ for all $m \ge 2$.  \vspace{.1cm}
\item For each $m \ge 2$ there is a \sh{} $CM_m \to F(A)$ with full image.
\end{enumerate}
\end{prop}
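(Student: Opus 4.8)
The plan is to run the cycle $(v)\Rightarrow(iv)\Rightarrow(iii)\Rightarrow(ii)\Leftrightarrow(i)$ using only general facts, and then close the loop with the single substantial implication $(i)\Rightarrow(v)$, where the tensorial structure of $F(A)$ supplied by Corollary~\ref{cor:Abel} is what does the work. The routine part goes as follows. For $(v)\Rightarrow(iv)$, given a \sh{} $\psi\colon CM_m\to F(A)$ with full image, set $c=\psi(\iota\otimes e_{11})$; then $\psi(\iota\otimes e_{jj})$, $1\le j\le m$, are pairwise orthogonal and equivalent with sum $\le 1$, so $m\langle c\rangle\le\langle 1\rangle$ in $\Cu(F(A))$, while fullness of $c$ gives $\langle 1\rangle\le n\langle c\rangle$ for some $n$; hence $\Div_m(F(A))\le n<\infty$. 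The step $(iv)\Rightarrow(iii)$ is the general implication $\Div_m<\infty\Rightarrow\Dec_m<\infty$ recalled above, $(iii)\Rightarrow(ii)$ is the case $m=2$, and $(ii)\Leftrightarrow(i)$ is the result of \cite{RobRor:divisible} identifying $\Dec_2<\infty$ with the $2$-splitting property.

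For the crux $(i)\Rightarrow(v)$ I would first use Lemma~\ref{lm:splitting-cf} to get two orthogonal purely full elements $a_0,a_1\in F(A)$, and then adjoin finitely many elements of $F(A)$ witnessing their fullness, letting $D\subseteq F(A)$ be the separable unital sub-\Cs{} generated by all of these. By construction $a_0,a_1$ are orthogonal and full \emph{in} $D$, so $D$ itself has the $2$-splitting property. Corollary~\ref{cor:Abel} supplies a unital \sh{} $\Phi\colon\otmaxinfty D\to F(A)$; since a unital \sh{} carries full elements to full elements (the ideal generated by $\Phi(c)$ contains $\Phi(1)=1$), any full \sh{} $\psi\colon CM_m\to\otmaxinfty D$ composes with $\Phi$ to a \sh{} $CM_m\to F(A)$ with full image. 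Thus it suffices to show that $\otmaxinfty D$ admits a full \sh{} from $CM_m$ for every $m$, equivalently, by the $m$-analogue of the Global Glimm/$\Div$ equivalence of \cite{RobRor:divisible}, that $\Div_m(\otmaxinfty D)<\infty$ for all $m$.

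This last reduction is the main obstacle, and it is precisely the point at which the divisibility numbers must collapse. The easy half is that the $2^k$ elementary corner tensors $a_{i_1}\otimes\cdots\otimes a_{i_k}$ in $\bigotimes^k_{\mathrm{max}}D$ (with $i_\ell\in\{0,1\}$) are pairwise orthogonal, sum to $(a_0+a_1)^{\otimes k}\le 1$, and are each full; choosing $2^k\ge m$ of them already upgrades $\Dec_2$ to $\Dec_m$ for all $m$ with \emph{no} equivalence hypothesis. The genuine difficulty is that $\Div_m$ and $CM_m$ require honest pairwise \emph{equivalent} orthogonal pieces, whereas fullness alone yields only Cuntz comparisons $\langle a_i\rangle\le N\langle a_j\rangle$ with a multiplicative constant, and automorphism-conjugacy of corners (via the flip) does not by itself produce Cuntz equivalence.

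To overcome this I would exploit the self-absorption $\otmaxinfty D\cong(\otmaxinfty D)\otmax(\otmaxinfty D)$ together with an exact fullness relation $1_D=\sum_\ell d_\ell^*a_0 d_\ell$, and run an iterated Cuntz-semigroup argument in $\Cu(\otmaxinfty D)$ to manufacture a single class $x$ with $mx\le\langle 1\rangle\le nx$; this is the content of the divisibility estimates for maximal infinite tensor powers in \cite{RobRor:divisible}, which convert the bare $2$-splitting of $D$ into arbitrary divisibility of $\otmaxinfty D$. The delicate step to get right is exactly the passage from comparison-up-to-a-constant to the exact orthogonal-equivalent (matrix-unit) structure demanded by a \sh{} out of $CM_m$, and once that is in hand, pushing the resulting full \sh{} forward through $\Phi$ completes the proof of $(i)\Rightarrow(v)$.
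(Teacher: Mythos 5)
Your handling of the routine implications is fine, and you have correctly located the crux: passing from two orthogonal \emph{full} elements to two orthogonal, full and \emph{equivalent} elements (equivalently, a full \sh{} out of $CM_2$) in some tensor power of $D$. But at exactly that point the proposal stops being a proof. You write that you would ``run an iterated Cuntz-semigroup argument'' and that the needed statement ``is the content of the divisibility estimates for maximal infinite tensor powers in \cite{RobRor:divisible}''; no specific such statement is identified, and the implication ``$2$-splitting of $D$ implies $\Div_m(\otmaxinfty D)<\infty$ for all $m$'' is precisely what this proposition establishes (the paper records afterwards that the $\otmaxinfty D$ version follows from \emph{this} proof, not from a citation). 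Moreover, the upgrade cannot be carried out purely inside $\Cu$: fullness only yields comparisons $\langle a_0\rangle\le N\langle a_1\rangle$ up to a multiplicative constant, and no manipulation of the classes of the elementary tensors $a_{i_1}\otimes\cdots\otimes a_{i_k}$ will produce an actual element $x$ with $x^*x\perp xx^*$, which is what a \sh{} from $CM_2$ requires. So as written, (i) $\Rightarrow$ (v) has a genuine gap.

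The missing construction is the following (it is the paper's argument, and it fits your framework). Enlarge $D$ so that it contains $a$, $b$ and witnesses of an exact fullness relation $1=\sum_{i=1}^n u_i\,at_ib\,v_i$. Each $w_i:=at_ib$ satisfies $w_i^*w_i=bt_i^*a^2t_ib\in\overline{bDb}$ and $w_iw_i^*=at_ib^2t_i^*a\in\overline{aDa}$, and these two hereditary subalgebras are orthogonal because $ab=0$; the problem is that no single $w_i$ need be full. To combine them without destroying the orthogonality, choose $k$ with $2^k\ge n$, take pairwise orthogonal full elementary tensors $c_1,\dots,c_n$ (in the letters $a,b$) in $\bigotimes_{j=1}^{k}D$, and set $x=\sum_i at_ib\otimes c_i\in\bigotimes_{j=1}^{k+1}D$. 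Then $x^*x=\sum_i bt_i^*a^2t_ib\otimes c_i^2$ and $xx^*=\sum_i at_ib^2t_i^*a\otimes c_i^2$ are orthogonal, equivalent by definition, and $x$ is full since the ideal it generates contains $x(1\otimes c_i)=at_ib\otimes c_i^2$, hence $at_ib\otimes 1$ for every $i$, hence $1$. This gives a full \sh{} $CM_2\to\bigotimes_{j=1}^{k+1}D$; composing full \sh s $CM_m\to CM_{2^\ell}\to\bigotimes_{j=1}^{\ell}CM_2$ with it yields a full \sh{} $CM_m\to\bigotimes_{j=1}^{k'}D$ for every $m$, and pushing forward through the unital \sh{} of Corollary~\ref{cor:Abel} completes (i) $\Rightarrow$ (v). Until you supply this (or an equivalent) construction, the key step of your proof is missing.
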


\begin{proof}
The implications (i) $\Leftrightarrow$ (ii) $\Leftarrow$ (iii) $\Leftarrow$ (iv) $\Leftrightarrow$ (v) have been proved in \cite{RobRor:divisible} (and hold for all unital \Cs s in the place of $F(A)$).  

Let us prove (i) $\Rightarrow$ (v). (All tensor products appearing in this proof are the maximal tensor product.) Assume that (i) holds. Let $a,b$ be two positive full elements in  $F(A)$ with $ab=0$. Let $D$ be the unital separable sub-\Cs{} of $F(A)$ generated by $1, a, b$ and elements $t_1, \dots, t_n$, $u_1, \dots, u_n$, and $v_1, \dots, v_n$ such that 
$$1 = \sum_{i=1}^n u_iat_ibv_i.$$
Then $\{at_1b,at_2b, \dots, at_nb\}$ is a full subset of $D$ (i.e., this subset is not contained in any proper closed two-sided ideal of $D$). 

Choose $k$ such that $2^k \ge n$ and find  pairwise orthogonal positive full elements $c_1,c_2, \dots, c_n$ in $\bigotimes_{j=1}^k D$. (Take each $c_i$ of the form $e_1 \otimes \cdots \otimes e_k$ with $e_j = a$ or $e_j = b$.)  Put
$$ x = \sum_{i=1}^n at_ib \otimes c_i \;\in \;\textstyle{\bigotimes_{j=1}^{k+1}}\,  D.$$
Then $x^*x \perp xx^*$ because $a \perp b$. Moreover, $x$ is full in $\bigotimes_{j=1}^{k+1} D$. To see this, note that any closed two-sided ideal $I$ in $\bigotimes_{j=1}^{k+1} D$ which contains $x$ will also contain $x(1\otimes c_i) = at_ib \otimes c_i^2$ for each $i$. As $c_i$, and hence also $c_i^2$, is full, it follows that $I$ contains $at_ib \otimes 1$ for each $i$. Therefore $I$ must be equal to  $\bigotimes_{j=1}^{k+1} D$.

Since $x^*x \perp xx^*$ there is a \sh{} $CM_2 \to \bigotimes_{j=1}^{k+1} D$ which maps $\iota \otimes e_{11}$ to $x^*x$ and $\iota \otimes e_{22}$ to $xx^*$, where $\iota \in C_0((0,1])$ is given by $\iota(t)=t$. As $x$ is full, the image of this \sh{} is full in $\bigotimes_{j=1}^{k+1} D$.

For each $k \ge 1$ there is a full \sh{} $CM_{2^k} \to \bigotimes_{j=1}^k CM_2$, and if $m \le 2^k$, then there is a full \sh{}  $CM_m \to CM_{2^k}$. In summary, for each $m \ge 1$ there exists a full \sh{} $CM_m \to \bigotimes_{j=1}^k D$  for some large enough $k$. 

Finally, it follows from Corollary~\ref{cor:Abel} that there is a unital, and hence full, \sh{} $\bigotimes_{j=1}^k D
\to F(A)$. Hence there is a \sh{} $CM_m \to  F(A)$ whose image is full in $F(A)$.
\end{proof}

\noindent The proposition above also holds with $F(A)$ replaced with $\otmaxinfty D$ where $D$ is any unital \Cs. (The proof is the same.) 

\begin{ques} \label{q4}
Let $D$ be a unital \Cs{} that has no characters. Does it follow that $\otmaxinfty D$ has the $2$-splitting property?
\end{ques}

\noindent If Question \ref{q4} has an affirmative answer, then, by  Corollary~\ref{cor:Abel} and Lemma~\ref{lm:sep-characters}, we could conclude that $F(A)$ has the $2$-splitting property, and hence will satisfy the equivalent conditions of Proposition \ref{prop:Div-equiv}, if and only if $F(A)$ has no characters.  (See also Example~\ref{ex:splitting}.)

We proceed to describe connections between divisibility properties of $F(A)$ and comparability properties of $A$ (and of $F(A)$). 

Let $D$ be a unital \Cs, and let $n$ and $m$ be positive integers. We say that $D$ has the \emph{$(m,n)$-comparison} property if for all $x,y \in \Cu(D)$ with $nx \le my$ one has $x \le y$. Note that $D$ is  almost unperforated (or has strict comparison) if and only if $D$
has $(m,n)$-comparison for all $n>m$. More generally, if $\alpha \ge 1$, then $D$ has $\alpha$-comparison, in the sense of \cite[Definition 2.1]{KirRor:Central-sequence} (see also Definition~\ref{def:alpha} below), if and only if $D$ has $(m,n)$-comparison  for all  positive integers $m,n$ satisfying $n > \alpha m$. 

As in \cite{RobRor:divisible} we say that $D$ is \emph{$(m,n)$-divisible} if $\Div_m(D) \le n$, i.e., if there exists $x \in \Cu(D)$ such that $mx \le \langle 1_D \rangle \le nx$. 

It follows from Proposition~\ref{prop:Div-equiv} that if $A$ is a unital \Cs{} such that $F(A)$ has the $2$-splitting property, then for each $m \ge 1$ there exists $n \ge 1$ such that $F(A)$ is $(m,n)$-divisible. 

We need the following elementary fact about the Cuntz semigroup of a non-separable \Cs.

\begin{lemma} \label{lm:div-D}
Let $A$ be a (possibly non-separable) unital \Cs. 
\begin{enumerate}
\item Let $x,y \in \Cu(A)$ and $n,m \in \N$ be given such that $nx \le my$. It follows that there is a separable unital sub-\Cs{} $D$ of $A$ such that $x$ and $y$ belong to the image of the induced map $\Cu(D) \to \Cu(A)$, and such that $nx \le my$ in $\Cu(D)$. \vspace{.1cm}
\item Let $m,n$ be positive integers and suppose that $A$ is $(m,n)$-divisible. Then there is a separable unital sub-\Cs{} $D$ of $A$ which is $(m,n)$-divisible.
\end{enumerate} 
\end{lemma}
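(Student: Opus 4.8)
The plan is to prove both parts by a downward L\"owenheim--Skolem (separabilization) argument: every instance of Cuntz comparison in $A \ot \cK$ is witnessed by only countably much data, and one captures all of that data inside a single separable unital sub-\Cs{} $D$, in such a way that the relevant relations already hold in $\Cu(D)$. The one structural fact I would use repeatedly is that $D \ot \cK$ is a \emph{closed} subalgebra of $A \ot \cK$ whenever $D \subseteq A$ is a sub-\Cs, so that any norm limit of elements of $D \ot \cK$ again lies in $D \ot \cK$. This is precisely what lets a subequivalence $\precsim$ that a priori only holds in $A \ot \cK$ descend verbatim to $D \ot \cK$, and it is the downward counterpart of the order-compatibility recorded in Lemma~\ref{lm:A-D-comparison}.

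For (i), first I would fix positive representatives $a, b \in A \ot \cK$ of $x, y$ and rewrite the hypothesis $nx \le my$ as $a^{\oplus n} \precsim b^{\oplus m}$ in $A \ot \cK$; by definition this furnishes a sequence $\{r_k\}$ in $A \ot \cK$ with $r_k^*\, b^{\oplus m}\, r_k \to a^{\oplus n}$. Next, using that every element of $A \ot \cK$ is a norm limit of finite matrices over $A$, I would collect the (countably many) matrix entries of $a$, of $b$, and of all the $r_k$, adjoin $1_A$, and let $D$ be the separable unital sub-\Cs{} of $A$ that they generate. Then $a, b, r_k \in D \ot \cK$, so $\langle a \rangle_D$ and $\langle b \rangle_D$ lie in $\Cu(D)$ and map to $x$ and $y$ under $\Cu(D) \to \Cu(A)$; and since $r_k^* b^{\oplus m} r_k \to a^{\oplus n}$ now takes place inside the closed subalgebra $D \ot \cK$, we obtain $a^{\oplus n} \precsim b^{\oplus m}$ in $\Cu(D)$, that is, $n\langle a\rangle_D \le m\langle b\rangle_D$. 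This is exactly the assertion.

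Part (ii) I would prove by the same recipe, the only difference being that there are now two comparisons to witness simultaneously. Writing $x = \langle c\rangle$ for a fixed positive $c \in A \ot \cK$, the hypothesis $mx \le \langle 1_A\rangle \le nx$ unpacks as $c^{\oplus m} \precsim 1_A$ and $1_A \precsim c^{\oplus n}$, each witnessed by a sequence in $A \ot \cK$. I would take $D$ to be generated by $1_A$, the entries of the single element $c$, and the entries of \emph{both} witnessing sequences. Since $D$ is unital with $1_D = 1_A$, the induced map sends $\langle 1_D\rangle$ to $\langle 1_A\rangle$, so with $\bar x := \langle c\rangle_D$ both comparisons descend to $\Cu(D)$ and yield $m\bar x \le \langle 1_D\rangle \le n \bar x$; thus $D$ is $(m,n)$-divisible.

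The argument is essentially bookkeeping, so I do not expect a deep obstacle; the only point that needs genuine care---and the step I would watch most carefully---is the reduction to countably much data. One must check that a Cuntz subequivalence, which is a \emph{limiting} statement in $A \ot \cK$ rather than an exact algebraic identity, really is captured by the finitely-at-a-time, countably-many entries chosen. This works precisely because the convergence $r_k^* b^{\oplus m} r_k \to a^{\oplus n}$ and the approximations of $a,b,r_k$ by finite matrices over $D$ all take place in the closed subalgebra $D \ot \cK$, so no information escapes $D$. A small related subtlety, specific to (ii), is to let a single chosen representative $c$ of $x$ serve \emph{both} comparisons, so that one common lift $\bar x$ works rather than two a priori unrelated lifts.
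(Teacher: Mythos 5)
Your proof is correct and follows essentially the same route as the paper: fix positive representatives, write out the countably many witnesses of the Cuntz subequivalences, and generate a separable unital sub-\Cs{} $D$ containing them all, so that the comparisons already hold in the closed subalgebra $D \otimes \cK$. The only cosmetic difference is that the paper encodes the subequivalence via the exact identities $d_k^*(b \otimes 1_m)d_k = (a-1/k)_+ \otimes 1_n$ rather than the approximating sequence $r_k^* b^{\oplus m} r_k \to a^{\oplus n}$, and your explicit insistence in (ii) on a single representative $c$ serving both inequalities is exactly the point the paper handles implicitly.
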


\begin{proof} (i). Let $a,b$ be positive elements in $A \otimes \cK$ which represent $x$ and $y$, respectively. For each $c \in A \otimes \cK$ and each integer $N \ge 1$, let $c \otimes 1_N$  denote the $N$-fold direct sum $c \oplus c \oplus \cdots \oplus c$, and identify this with an element of $A \otimes \cK$ (by choosing an isomorphism $\cK \cong M_N \otimes \cK$). 

 For each $k \ge 1$ there exists an element $d_k \in A \otimes \cK$ such that 
$$d_k^*\big( b \otimes 1_m\big)d_k = (a-1/k)_+ \otimes 1_n.$$
Let $D$ be any unital separable sub-\Cs{} of $A$ such that $D \otimes \cK$ contains the elements $a,b, a \otimes 1_n, b \otimes 1_m$ and $d_k$ for all $k \ge 1$. Then $D$ has the desired properties. 

(ii). If $A$ is $(m,n)$-divisible, then there exist $x,y \in \Cu(A)$ such that $mx \le \langle 1_A \rangle \le ny$. By applying part (i) to both of these inequalities (representing $\langle 1_A \rangle$ with $1_A$) we get the desired separable $(m,n)$-divisible  sub-\Cs{} $D$ of $A$. 
\end{proof}

\noindent Part (ii) of the proposition below, that relates divisibility properties of $F(A)$ to comparability properties of $A$ and of $F(A)$, is essentially contained in \cite[Lemma 6.1]{RobRor:divisible}. For the convenience of the reader we include a proof. 

\begin{prop} \label{prop:div-comp}  Let $A$ be a unital \Cs.
\begin{enumerate}
\item If $F(A)$ has the $2$-splitting property, then for each integer $m \ge 2$ there exists an integer $n \ge 1$ such that $F(A)$ is $(m,n)$-divisible. \vspace{.1cm}
\item Suppose that $m,n$ are  positive integers such that $F(A)$ is  $(m,n)$-divisible. Then $A$ and $F(A)$ have the $(m,n)$-comparison property, and $A$ is $(m,n)$-divisible.
\end{enumerate}
\end{prop}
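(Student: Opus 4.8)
The first statement is immediate and I would dispose of it in one line: by Proposition~\ref{prop:Div-equiv} the $2$-splitting property of $F(A)$ is equivalent to $\Div_m(F(A)) < \infty$ for all $m \ge 2$, and $\Div_m(F(A)) < \infty$ says precisely that $F(A)$ is $(m,n)$-divisible for some $n$. The engine behind part (ii) is the ability to \emph{multiply a Cuntz inequality by a divisibility element}. If $z \in \Cu(F(A))$ witnesses $(m,n)$-divisibility, so that $mz \le \langle 1\rangle \le nz$, and if $nx \le my$ with $x = \langle a\rangle$, $y=\langle b\rangle$, then one wants the telescoping chain
$$ x \;=\; x\langle 1\rangle \;\le\; n(xz) \;=\; (nx)z \;\le\; (my)z \;=\; m(yz) \;\le\; y\langle 1\rangle \;=\; y, $$
where the outer inequalities come from $\langle 1\rangle \le nz$ and $mz \le \langle 1\rangle$ and the middle one from $nx \le my$. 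All the work is in giving the products $xz$, $yz$ an honest meaning and checking that each step survives.

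For the comparison property of $A$ this is clean. Since $F(A) = A_\omega \cap A'$ commutes with $A$, the multiplication map yields a $^*$-homomorphism $\mu \colon A \otmax F(A) \to A_\omega$. Represent $x,y$ by $a,b \in A\otimes\cK$ and $z$ by $c \in F(A)\otimes\cK$. Tensoring a subequivalence with a fixed positive element preserves it (if $u \precsim v$ then $w\otimes u \precsim w\otimes v$), so in $\Cu(A\otmax F(A))$ one has $a\otimes 1 \precsim a\otimes c^{\oplus n}$, then $a^{\oplus n}\otimes c \precsim b^{\oplus m}\otimes c$, and finally $b\otimes c^{\oplus m} \precsim b\otimes 1$. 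Applying $\mu$ (which sends $a\otimes 1 \mapsto a$, $b\otimes 1\mapsto b$, $a\otimes c\mapsto ac$, $b\otimes c\mapsto bc$) and reading the three inequalities in $\Cu(A_\omega)$ gives exactly $\langle a\rangle \le n\langle ac\rangle \le m\langle bc\rangle \le \langle b\rangle$. Thus $\langle a\rangle \le \langle b\rangle$ in $\Cu(A_\omega)$, and since $A \subseteq A_\omega$, Lemma~\ref{lm:A-D-comparison}~(i) with $P = A_\omega$ pulls this back to $\langle a\rangle \le \langle b\rangle$ in $\Cu(A)$. Hence $A$ has $(m,n)$-comparison.

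For the comparison property of $F(A)$ the same computation works once the divisibility element is made to \emph{commute} with the representatives of $x$ and $y$. I would fix $a,b \in F(A)\otimes\cK$ representing $x,y$ together with a sequence of witnesses for $a^{\oplus n}\precsim b^{\oplus m}$, and let $B$ be the separable unital sub-\Cs{} of $A_\omega$ generated by $A$ and all of these, so that $a^{\oplus n}\precsim b^{\oplus m}$ already holds in the separable algebra $G \subseteq F(A)$ spanned by the relevant entries. Using Lemma~\ref{lm:div-D} I would choose a separable $(m,n)$-divisible unital $D_0 \subseteq F(A)$, and apply Proposition~\ref{prop:Abel} to get a unital embedding $\psi \colon D_0 \to A_\omega \cap B' \subseteq F(A)$. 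Then $\tilde c := \psi(c)$ is a divisibility witness in $F(A)$ commuting with $a$ and $b$, the multiplication map $G \otmax \psi(D_0) \to F(A)$ is a genuine $^*$-homomorphism, and the identical telescoping chain now runs inside $\Cu(F(A))$ and yields $x \le y$. (For non-separable $A$ one first reduces, via Lemma~\ref{lm:div-D} and a separable exhaustion, to this situation.)

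Finally, to see that $A$ itself is $(m,n)$-divisible I would \emph{descend} the witness: lift $c$ to a bounded sequence $(c_k)$ with $c_k \in A\otimes\cK$, using $F(A)\otimes\cK \subseteq A_\omega\otimes\cK \subseteq (A\otimes\cK)_\omega$. Because $1$ is a projection, $\langle 1\rangle \le nz$ upgrades to $1 \precsim (c^{\oplus n}-3\delta)_+$ for some $\delta>0$, and lifting an approximate witness gives $1_A \precsim (c_k^{\oplus n}-2\delta)_+$ for $\omega$-many $k$. On the other side, writing $(c^{\oplus m}-\delta)_+ = w^*w$ with $w$ a "first-row" witness so that $ww^*$ lies in a corner isomorphic to $F(A)$, and lifting $w$, one obtains $(c_k^{\oplus m}-2\delta)_+ \precsim w_kw_k^* \precsim 1_A$ for $\omega$-many $k$ (every positive element of the unital algebra $A$ is $\precsim 1_A$). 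Taking $k$ in the intersection and setting $x := \langle (c_k-2\delta)_+\rangle$ gives $mx \le \langle 1_A\rangle \le nx$. I expect the genuine difficulties to be exactly these two: arranging a divisibility witness that commutes with $a,b$ in the $F(A)$-case (which is precisely what Proposition~\ref{prop:Abel} delivers), and the $\delta$-bookkeeping in the descent, where the asymmetry between the projection $1$ (where compactness buys room) and the general element $c^{\oplus m}$ forces the shrunk witness $(c_k-2\delta)_+$.
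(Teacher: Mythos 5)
Your proof is correct and follows the same overall strategy as the paper: part (i) via Proposition~\ref{prop:Div-equiv}; the comparison statements by tensoring with a divisibility witness, mapping into $A_\omega$ (resp.\ $F(A)$) through the commuting copy supplied by the central sequence algebra, and descending with Lemma~\ref{lm:A-D-comparison}; and the $(m,n)$-divisibility of $A$ by pulling a witness down from the ultrapower. The differences are in the assembly. First, where the paper cites \cite[Lemma~6.1~(i)]{RobRor:divisible} for the inequality $x\otimes\langle 1_D\rangle\le y\otimes\langle 1_D\rangle$, you prove it inline with the telescoping chain $x\le n(xz)\le m(yz)\le y$ --- same content, just self-contained. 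Second, for the $F(A)$-comparison the paper applies Proposition~\ref{prop:Abel} in the opposite direction: it moves the separable algebra $B$ containing the representatives of $x,y$ into $F(A)\cap D'$ via a monomorphism $\rho$, whereas you move the divisibility algebra $D_0$ into $A_\omega\cap B'$ and leave $x,y$ untouched. Your orientation is arguably the cleaner one, since the paper's version tacitly needs $\Cu(\rho)$ to fix the classes $x$ and $y$, which requires knowing that the monomorphism from Proposition~\ref{prop:Abel} is suitably compatible with the inclusion; your version sidesteps this entirely. Third, where the paper quotes \cite[Proposition~8.4~(i)]{RobRor:divisible} to pass from $\Div_m(A_\omega)\le n$ to $\Div_m(A)\le n$, you give the descent by hand (lifting $c$ to $(c_k)$, using that $\langle 1\rangle$ is compact to absorb an $\ep$-cut on one side, and a row-witness $w$ landing in the unit corner on the other); this is essentially a reproof of that proposition in the case at hand, and the $\delta$-bookkeeping you describe does go through. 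Both of your substitutions buy self-containedness at the cost of length; the paper's version buys brevity at the cost of two external references and the small compatibility point about $\rho$ noted above.
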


\begin{proof}  (i). Set $n = \Div_m(F(A))$, which is finite by Proposition~\ref{prop:Div-equiv}.

(ii). Pick a separable unital sub-\Cs{} $D$ of $F(A)$ which is $(m,n)$-divisible, cf.\ Lemma~\ref{lm:div-D}, and let $\omega$ be a free ultrafilter which realizes $F(A) = F_\omega(A) = A_\omega \cap A'$. 

Let us first show that $A$ has the $(m,n)$-comparison property. Let $x,y \in \Cu(A)$ be given such that $nx \le my$. Then $x \otimes \langle 1_D \rangle \le y \otimes \langle 1_D \rangle$ in $\Cu(A \otmax D)$ by \cite[Lemma~6.1~(i)]{RobRor:divisible}. Let $\varphi \colon A \otmax D \to A_\omega$ be the natural \sh, and let $P$ be the image of $\varphi$. Then $A \subseteq P \subseteq A_\omega$ and
$$\Cu(\varphi)(x \otimes \langle 1_D\rangle) = x \in \Cu(P), \qquad \Cu(\varphi)(y \otimes \langle 1_D \rangle) = y \in \Cu(P).$$
It follows that $x \le y$ in $\Cu(P)$. We can now use Lemma~\ref{lm:A-D-comparison} to conclude that $x \le y$ in $\Cu(A)$.

Suppose now that $x,y \in \Cu(F(A))$ are given such that $nx \le my$. Use Lemma~\ref{lm:div-D} (i) to find a separable sub-\Cs{} $B$ of $F(A)$ such that $x$ and $y$ belong to  $\Cu(B)$ and satisfy $nx \le my$ in $\Cu(B)$. It then follows from \cite[Lemma~6.1~(i)]{RobRor:divisible} that $x \otimes \langle 1_D \rangle \le y \otimes \langle 1_D \rangle$ in $\Cu(B \otmax D)$. 

Use Proposition~\ref{prop:Abel} to find a unital \sh{} $\rho \colon B \to F(A) \cap D'$, and then define a unital \sh{} $\varphi \colon B \otmax D \to F(A)$ by $\varphi(b \otimes d) = \rho(b) d$, for $b \in B$ and $d \in D$. Then 
$$\Cu(\varphi)(x \otimes \langle 1_D \rangle) = x, \qquad \Cu(\varphi)(y \otimes \langle 1_D \rangle) = y.$$
Hence $x \le y$ in $\Cu(F(A))$. 

Finally, since $F(A)$ is $(m,n)$-divisible, so is $A_\omega$, i.e., $\Div_m(A_\omega) \le n$. It then follows from  \cite[Proposition 8.4 (i)]{RobRor:divisible} that $\Div_m(A) \le n$, whence $A$ is $(m,n)$-divisible.
\end{proof}

\noindent Recall that an element $x$ in an ordered additive semigroup $S$ is \emph{properly infinite} if $2x \le x$. If this hold, then $kx \le x$ for all integers $k \ge 1$. 

\begin{lemma}  \label{lm:stable-pi}
Let $A$ be a \Cs{} which has $(m,n)$-comparison for some positive integers $m,n$ with $m \ge 2$. For each $x \in \Cu(A)$ and for each integer $k \ge 2$, if $kx$ is properly infinite, then $x$ is properly infinite.
\end{lemma}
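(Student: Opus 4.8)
The plan is to argue by (strong) induction on $k$, showing at each step that if $kx$ is properly infinite then so is $k'x$ for the strictly smaller integer $k' = \lceil k/m\rceil$. Since $m \ge 2$ forces $k' < k$ whenever $k \ge 2$, iterating the reduction brings the multiplicity down to $k' = 1$, which is exactly the assertion that $x$ is properly infinite. The base case $k=1$ is the conclusion itself.

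First I would record the only consequence of proper infiniteness that is needed. By the remark preceding the lemma, $kx$ properly infinite gives $j(kx) \le kx$ for all $j \ge 1$. Combining this with monotonicity of $t \mapsto tx$ one gets $jx \le kx$ for \emph{every} integer $j \ge 1$: indeed $j \le \lceil j/k\rceil\, k$, so $jx \le \lceil j/k\rceil(kx) \le kx$. Thus the properly infinite element $kx$ dominates every multiple of $x$.

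The heart of the argument is the inductive step, which I would carry out by a single application of the $(m,n)$-comparison property to the pair $u = 2k'x$ and $v = k'x$, where $k' = \lceil k/m\rceil$. On one hand $nu = (2nk')x \le kx$ by the domination just established; on the other hand $mv = (mk')x \ge kx$ by monotonicity, since $mk' = m\lceil k/m\rceil \ge k$. Hence $nu \le kx \le mv$, so $nu \le mv$, and $(m,n)$-comparison yields $u \le v$, that is $2k'x \le k'x$. Therefore $k'x$ is properly infinite, and since $1 \le k' < k$ the induction hypothesis applied to $k'$ shows that $x$ is properly infinite.

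I do not expect a serious obstacle; the entire content lies in choosing the ``divide by $m$'' reduction $k' = \lceil k/m\rceil$ so that the single pair $(2k'x,\, k'x)$ simultaneously satisfies $n\cdot(2k'x)\le kx$ and $kx \le m\cdot(k'x)$, sandwiching $kx$ and letting the comparison strip the coefficients. The hypothesis $m \ge 2$ is used precisely to guarantee $k' < k$, so that the induction terminates; for $m=1$ no reduction occurs, as indeed it must not.
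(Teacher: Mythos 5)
Your proof is correct and is essentially the paper's argument: both rest on a single application of $(m,n)$-comparison to a sandwich of the form $n\cdot(\text{larger multiple}) \le kx \le m\cdot(\text{smaller multiple})$, iterated until the multiplier reaches $1$. The only cosmetic difference is that the paper runs the descent as a least-counterexample argument decreasing the multiplier by $1$ (using $n\ell x \le \ell x \le m(\ell-1)x$), whereas you decrease it from $k$ to $\lceil k/m\rceil$; both reductions terminate for the same reason, namely $m \ge 2$.
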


\begin{proof} Let $\ell \ge 1$ be an integer. Observe that $\ell x$ is properly infinite if and only if $\ell'x \le \ell x$ for all integers $\ell' \ge 1$. Let $\ell \ge 1$ be the least integer such that $\ell x$ is properly infinite. Assume, to reach a contradiction, that $\ell \ge 2$. Put $y= (\ell-1)x$ and $z = \ell x$. Then $nz = n\ell x \le \ell x \le m(\ell-1)x = my$. It follows that $z \le y$, i.e., that $\ell x \le (\ell-1)x$. But then $\ell'x \le \ell x \le (\ell-1)x$ for all $\ell' \ge 1$, which shows that $(\ell-1)x$ is properly infinite, a contradiction.
\end{proof}

\begin{prop} \label{prop:sf-pi}
Let $A$ be a unital \Cs{} and suppose that $F(A)$ has the $2$-splitting property. Then the following holds:
\begin{enumerate}
\item If $x \in \Cu(A)$ is such that $kx$ is properly infinite for some integer $k \ge 1$, then $x$ is properly infinite. \vspace{.1cm}
\item If $p$ is a projection in $A \otimes \cK$ and if some multiple $p \oplus p \oplus \cdots \oplus p$ of $p$ is properly infinite, then $p$ is properly infinite. \vspace{.1cm}
\item If $A$ is simple, then either $A$ is stably finite or $A$ is purely infinite.
\end{enumerate}
\end{prop}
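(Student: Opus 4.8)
All three statements are nested, and each of them will be driven by the single consequence of the hypothesis recorded in Proposition~\ref{prop:div-comp}: since $F(A)$ has the $2$-splitting property, for every $m \ge 2$ there is an $n$ with $F(A)$ being $(m,n)$-divisible and hence with $A$ having the $(m,n)$-comparison property.

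For (i) the plan is to invoke Lemma~\ref{lm:stable-pi} directly. Taking $m = 2$ above produces an $n$ for which $A$ has $(2,n)$-comparison, so $A$ satisfies the hypothesis of Lemma~\ref{lm:stable-pi}. The case $k = 1$ is trivial, and for $k \ge 2$ the lemma gives exactly that $x$ is properly infinite. Part (ii) is then just the reformulation of (i) for projections: putting $x = \langle p \rangle$, proper infiniteness of the $\ell$-fold sum $p \oplus \cdots \oplus p$ says precisely that $\ell x$ is properly infinite in $\Cu(A)$ (Cuntz comparison of projections coincides with Murray--von Neumann comparison), whence $x = \langle p \rangle$ is properly infinite by (i); and for a projection $2\langle p \rangle \le \langle p \rangle$ is exactly proper infiniteness of $p$.

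For (iii) I would establish the dichotomy by showing that if $A$ is not stably finite then it is purely infinite. First, not being stably finite means some $M_k(A)$ is not finite, so $1_k$ is an infinite projection and $k\langle 1_A \rangle = \langle 1_k \rangle = \langle 1_k \rangle + \langle e \rangle$ for some nonzero positive $e \in A \otimes \cK$. Simplicity makes $e$ full, so $k\langle 1_A \rangle \le M\langle e \rangle$ for some $M$; iterating the infiniteness relation then gives $2k\langle 1_A \rangle \le k\langle 1_A \rangle + M\langle e \rangle = k\langle 1_A \rangle$, so $k\langle 1_A \rangle$ is properly infinite, and by part (i) so is $\langle 1_A \rangle$ itself. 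It then remains to establish $\langle 1_A \rangle \le \langle a \rangle$ for every nonzero $a \in A_+$, which (since $A \ne \C$) is the defining property of $A$ being a purely infinite simple \Cs. Given such an $a$, fullness yields $\langle 1_A \rangle \le N\langle a \rangle$ for some $N$; choosing $n$ with $A$ having $(N,n)$-comparison and using proper infiniteness of $\langle 1_A \rangle$ to write $n\langle 1_A \rangle \le \langle 1_A \rangle \le N\langle a \rangle$, the $(N,n)$-comparison property delivers $\langle 1_A \rangle \le \langle a \rangle$.

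The only genuinely delicate point is this last step of (iii): we do not have strict comparison, only $(m,n)$-comparison for a limited range of pairs, so the fullness estimate $\langle 1_A \rangle \le N\langle a \rangle$ cannot be improved to $\langle 1_A \rangle \le \langle a \rangle$ on its own. The device that makes it work is to exploit proper infiniteness of $\langle 1_A \rangle$ to produce, for the particular $N$ coming from fullness, the inequality $n\langle 1_A \rangle \le N\langle a \rangle$ matching whatever $(N,n)$-comparison Proposition~\ref{prop:div-comp} supplies. One should also double-check the standard facts used above: that in a simple \Cs{} an infinite projection is automatically properly infinite (via fullness of the defect $e$), and that the condition $\langle 1_A \rangle \le \langle a \rangle$ for all nonzero $a \in A_+$ characterises pure infiniteness of a unital simple \Cs.
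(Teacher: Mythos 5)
Your proposal is correct. Parts (i) and (ii) coincide with the paper's argument: $(2,n)$-comparison from Proposition~\ref{prop:div-comp} fed into Lemma~\ref{lm:stable-pi}, and the translation between proper infiniteness of a projection and of its Cuntz class. In (iii) you finish by a slightly different route: the paper shows that \emph{every} $x \in \Cu(A)$ is properly infinite by noting that once $n\langle 1_A\rangle$ is properly infinite it dominates every element of $\Cu(A)$, so with $k$ chosen by simplicity so that $\langle 1_A\rangle \le kx$ one gets $k'x \le n\langle 1_A \rangle \le nkx$ for all $k'$, whence $Nx$ (with $N=nk$) is properly infinite and (i) applies --- no further explicit appeal to $(m,n)$-comparison is needed. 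You instead apply (i) only to $\langle 1_A\rangle$ and then close the argument with a second invocation of Proposition~\ref{prop:div-comp}, producing $(N,n)$-comparison for the particular $N$ coming from fullness of $a$ and combining it with $n\langle 1_A\rangle \le \langle 1_A\rangle \le N\langle a\rangle$ to get $\langle 1_A\rangle \le \langle a\rangle$. Both finishes are valid: your extra appeal is legitimate because the proposition supplies $(N,n)$-comparison for every $N \ge 2$, your preliminary observation that an infinite $1_k$ in a simple algebra is properly infinite (via fullness of the defect projection) is exactly the fact the paper uses with the phrase ``infinite, and hence properly infinite (because $A$ is assumed to be simple)'', and the characterisation of simple purely infinite \Cs s via $\langle 1_A\rangle \le \langle a\rangle$ for all non-zero $a \in A_+$ is the standard one, equivalent to the paper's reduction to proper infiniteness of all positive elements.
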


\begin{proof} It follows from Propositions~\ref{prop:div-comp} that $A$ has the $(2,n)$-comparison property for some integer $n \ge 1$. Hence (i) follows from Lemma~\ref{lm:stable-pi}. Part (ii) follows from part (i) because a projection $q \in A \otimes \cK$ is properly infinite if and only if $\langle q \rangle$ is properly infinite in $\Cu(A)$. 

(iii). Suppose that $A$ is simple and not stably finite. We must show that $A$ is purely infinite. It suffices to show that each (non-zero) positive element $a \in A$ is properly infinite, or, equivalently, that $x$ is properly infinite in $\Cu(A)$ for all $x \in \Cu(A)$. By (i) it suffices to show that $Nx$ is properly infinite for some $N$. 

By the assumption that $A$ is not stably finite there exists $n \ge 1$ such that $M_n(A)$ is infinite, and hence properly infinite (because $A$ is assumed to be simple). Hence $n \langle 1_A \rangle$ is properly infinite in $\Cu(A)$. This entails that $y \le n\langle 1_A \rangle$ for all $y \in \Cu(A)$. Moreover, by simplicity of $A$, we know that $kx \ge \langle 1_A \rangle$ for some integer $k \ge 1$. Put $N=nk$. Then $k'x \le n \langle 1_A \rangle \le Nx$ for every integer $k' \ge 1$. This shows that $Nx$ is properly infinite.
\end{proof}

\noindent The conclusions of Proposition~\ref{prop:sf-pi} hold for any \Cs{} of real rank zero with the strong Corona Factorization Property, cf.\ \cite[Theorem 5.14 and Corollary 5.16]{OPR:CFP-RR0}. Thus, when specializing to \Cs s of real rank zero, we can in Proposition~\ref{prop:sf-pi} relax the assumption that  $F(A)$ has the $2$-splitting property to the (formally) weaker assumption that  $F(A)$ has no characters, cf.\ Theorem~\ref{thm:CFP}. 


\section{Embedding the Jiang-Su algebra} \label{sec:Z}

\noindent We proved in the previous section that if $A$ is a unital \Cs{} such that $F(A)$ has the $2$-splitting property, i.e., it contains two full pairwise orthogonal elements, then, for each integer $m \ge 2$, there is a full \sh{} $CM_m \to F(A)$. Moreover, for each $m \ge 2$ there exists an integer $n \ge 1$ such that $F(A)$ is $(m,n)$-divisible, i.e., $mx \le \langle 1 \rangle \le n x$  for some $x$ in $\Cu(F(A))$. 

We give here a sufficient (and also necessary) divisibility condition on $F(A)$ that will ensure the existence of a unital embedding of the Jiang-Su algebra into $F(A)$, and hence imply $\cZ$-stability of $A$. We emphasize that this condition, at least formally, is stronger than the splitting property considered in the previous sections, which again, formally, is stronger than the condition that $F(A)$ has no characters. 

\begin{definition}[{\mbox{cf.}} {\cite[Definition 2.1]{KirRor:Central-sequence}}] \label{def:alpha}
Let $D$ be a unital \Cs. 
\begin{enumerate}
\item We say that $D$ has the \emph{$\alpha$-comparison property} if the following holds. 
$$ \forall x,y \in \Cu(D) \; \forall n,m \in \N: nx \le my \; \; \text{and} \; \; n > \alpha m \implies x \le y.$$
\item We say that $D$ has the \emph{$\alpha$-divisibility property} if  for all $x \in \Cu(D)$ and for all integers $n, m \ge 1$ such that $n > \alpha m$ there exists $y \in \Cu(D)$ such that $my \le x \le ny$. 
\end{enumerate}
\end{definition}

\noindent We also remind the reader of the \emph{asymptotic divisibility constant}  from 
\cite[Section 4]{RobRor:divisible} which for each unital \Cs{} $D$ is defined to be 
$$\Div_*(D) \;  := \liminf_{m \to \infty} \frac{\Div_m(D)}{m}.$$
We know from Proposition~\ref{prop:Div-equiv} that $\Div_m(A) < \infty$ and $\Div_m(F(A)) < \infty$ for all $m \ge 2$ if $F(A)$ has the $2$-splitting property. However, we do not know if this also implies that $\Div_*(F(A)) < \infty$.

\begin{prop} \label{prop:alpha-pure}
 Let $A$ be a separable unital \Cs{} for which $\alpha := \Div_*(F(A)) < \infty$. 
Then $A$ and $F(A)$ are $\alpha$-divisible and have $\alpha$-comparison. 

In particular, if $m = \lceil \alpha \rceil-1$, then $A$ is $(m,m)$-pure (in the sense of Winter, \cite{Winter:Z}). 
\end{prop}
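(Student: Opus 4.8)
The plan is to extract a favourable sequence of divisibility data for $F(A)$ from the hypothesis $\Div_*(F(A)) = \alpha$, then to promote divisibility of the \emph{unit} at these scales to divisibility of \emph{every} element of $\Cu(F(A))$ at an \emph{arbitrary} base, and finally to feed this into the machinery of Proposition~\ref{prop:div-comp}. Since $\alpha = \liminf_m \Div_m(F(A))/m$, I would choose integers $p_1 < p_2 < \cdots$ tending to infinity and set $q_k := \Div_{p_k}(F(A))$, so that $F(A)$ is $(p_k,q_k)$-divisible, witnessed by some $w_k \in \Cu(F(A))$ with $p_k w_k \le \langle 1 \rangle \le q_k w_k$, and $q_k/p_k \to \alpha$.

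The first genuine step is to promote this to divisibility of all elements. Given $x \in \Cu(F(A))$, I would represent it in a separable unital sub-\Cs{} $B \subseteq F(A)$ and, using Lemma~\ref{lm:div-D}, realise a separable $(p_k,q_k)$-divisible sub-\Cs{} $D \subseteq F(A)$ carrying $w_k$. By Proposition~\ref{prop:Abel} there is a unital copy of $D$ inside $A_\omega \cap B' \subseteq F(A)$ commuting with $B$, so the induced unital \sh{} $B \otmax D \to F(A)$ sends $x \otimes w_k$ to an element $y$ with $p_k y \le x \le q_k y$ (using $p_k w_k \le \langle 1_D \rangle \le q_k w_k$ and that the copy is unital). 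Hence $F(A)$ is $(p_k,q_k)$-divisible at every element. To reach an arbitrary base I would use a rounding argument: given $x$ and integers $n > \alpha m$, pick $k$ so large that $q_k/p_k < n/m$ and $q_k \le n\lfloor p_k/m\rfloor$ — possible because $p_k\,(n/m - q_k/p_k) \to \infty$ — and set $z = \lfloor p_k/m\rfloor\, y$. Then $mz \le p_k y \le x$ and $x \le q_k y \le nz$, so $mz \le x \le nz$. This establishes that $F(A)$ is $\alpha$-divisible.

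Taking $x = \langle 1 \rangle$ shows that $F(A)$ is $(m,n)$-divisible for \emph{every} pair with $n > \alpha m$. For each such pair Proposition~\ref{prop:div-comp}(ii) yields that $A$ and $F(A)$ have $(m,n)$-comparison and that $A$ is $(m,n)$-divisible; letting $n$ range over all integers exceeding $\alpha m$ gives $\alpha$-comparison of both $A$ and $F(A)$. For the $\alpha$-divisibility of $A$ at all elements I would repeat the commuting-copies argument inside $A_\omega$ (where $F(A) = A_\omega \cap A'$ automatically commutes with $A$) to divide an arbitrary $x \in \Cu(A)$ at base $m$ \emph{in $A_\omega$}, and then transfer the resulting divisibility from $A_\omega$ down to $A$ via the ultrapower divisibility-transfer result \cite[Proposition~8.4]{RobRor:divisible}, exactly as in the proof of Proposition~\ref{prop:div-comp}(ii).

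Finally, the displayed consequence is a translation of terminology. Under the dictionary in which almost unperforation is $\alpha$-comparison with $\alpha=1$, Winter's $m$-comparison and $m$-almost divisibility correspond to the $(m+1)$-comparison and $(m+1)$-divisibility of Definition~\ref{def:alpha}. Since $\lceil\alpha\rceil = m+1 \ge \alpha$ for $m = \lceil\alpha\rceil-1$, the $\alpha$-comparison and $\alpha$-divisibility just proved for $A$ imply Winter's $m$-comparison and $m$-almost divisibility, that is, $A$ is $(m,m)$-pure. I expect the main obstacle to be the bootstrapping in the second paragraph: passing from divisibility of the unit at a single favourable scale $(p_k,q_k)$ to divisibility of every element at an arbitrary base $m$ with controlled upper multiplicity $n$. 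The commuting-copies input — special to $F(A)$, and precisely what fails for a general \Cs{} — handles the ``every element'' part, while the rounding estimate handles the ``arbitrary base'' part; the analogous all-element divisibility for $A$ itself is the most delicate point, since the natural divisor lives a priori only in $\Cu(A_\omega)$ and must be returned to $\Cu(A)$.
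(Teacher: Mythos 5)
Your proof is correct and follows the same overall route as the paper's: establish that $F(A)$ is $(m,n)$-divisible for every pair with $n > \alpha m$, and then feed each such pair into Proposition~\ref{prop:div-comp}. The difference is one of economy. The paper obtains the first step by simply citing \cite[Proposition~4.1]{RobRor:divisible} ``and its proof,'' whereas you re-derive it from scratch: the rounding estimate $q_k \le n\lfloor p_k/m\rfloor$ (valid since $p_k(n/m - q_k/p_k)\to\infty$) handles the arbitrary base, and the commuting-copies argument via Proposition~\ref{prop:Abel} handles the passage from divisibility of the unit to divisibility of an arbitrary element of $\Cu(F(A))$ --- a point the paper's three-line proof glosses over entirely. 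That bootstrap is exactly the mechanism already used in the proof of Proposition~\ref{prop:div-comp}(ii), so nothing new is needed; one small correction: for a separable $B \subseteq F(A)$ one does \emph{not} have $A_\omega \cap B' \subseteq F(A)$, so you must apply Proposition~\ref{prop:Abel} with $C^*(A,B)$ in place of $B$ (as in Corollary~\ref{cor:Abel}) to land the copy of $D$ inside $F(A)\cap B'$. Your dictionary identifying Winter's $m$-comparison and $m$-almost-divisibility with $(m+1)$-comparison and $(m+1)$-divisibility in the sense of Definition~\ref{def:alpha}, together with $\alpha \le \lceil\alpha\rceil = m+1$, is the intended (and correct) reading of the final ``$(m,m)$-pure'' claim, which the paper states without argument.
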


\begin{proof}  Let $n,m$ be positive integers such that $n > \alpha m$. It follows from \cite[Proposition~4.1]{RobRor:divisible} (and its proof) that $F(A)$ is $(m,n)$-divisible. Next, it follows from Proposition~\ref{prop:div-comp} that $A$ and  $F(A)$ have $(m,n)$-comparison and that $A$ is $(m,n)$-divisible. This shows that $A$ and $F(A)$ have the $\alpha$-comparison and $\alpha$-divisibility properties. 
\end{proof}

\noindent If we combine Proposition~\ref{prop:alpha-pure} above with the main theorem from Winter's seminal paper, \cite{Winter:Z}, we obtain:

\begin{prop}  Let $A$ be a separable simple unital \Cs{} with locally finite nuclear dimension. If $\Div_*(F(A)) < \infty$, then $A \cong A \otimes \cZ$.
\end{prop}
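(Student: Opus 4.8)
The plan is to obtain the conclusion as an immediate application of Winter's $\cZ$-stability theorem, with Proposition~\ref{prop:alpha-pure} supplying exactly the regularity hypothesis that Winter's theorem demands. No essentially new argument is required---everything has been prepared in the preceding results---so the proof will be very short. The only points to watch are that the purity produced by Proposition~\ref{prop:alpha-pure} is precisely of the form that feeds into \cite{Winter:Z}, and that $A$ is not accidentally elementary, since finite-dimensional simple algebras are pure yet fail to absorb $\cZ$.

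Concretely, I would set $\alpha := \Div_*(F(A))$, which is finite by hypothesis, and put $m := \lceil \alpha \rceil - 1$. Proposition~\ref{prop:alpha-pure} then yields at once that $A$ has the $\alpha$-comparison and $\alpha$-divisibility properties and, in particular, that $A$ is $(m,m)$-pure in the sense of Winter. I would also record that the standing hypothesis rules out the elementary case: if $A$ were finite-dimensional and simple, then $A \cong M_k$ for some $k$, whence $F(A) \cong \C$ and $\Div_m(F(A)) = \infty$ for every $m \ge 2$, contradicting finiteness of $\Div_*(F(A))$. Thus $A$ is a separable, simple, unital, non-elementary \Cs{} with locally finite nuclear dimension.

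Finally, since $A$ has locally finite nuclear dimension by assumption and is $(m,m)$-pure, the main theorem of \cite{Winter:Z} gives $A \cong A \otimes \cZ$. The entire content of the statement is therefore carried by the two cited inputs: Proposition~\ref{prop:alpha-pure} (which itself rests on \cite[Proposition~4.1]{RobRor:divisible} together with Proposition~\ref{prop:div-comp}) and Winter's theorem. I expect no genuine obstacle beyond checking that the purity constants line up with Winter's hypotheses, and these match by the very choice of $m$.
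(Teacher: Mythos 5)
Your proof is correct and follows exactly the route the paper intends: the paper states this proposition as an immediate combination of Proposition~\ref{prop:alpha-pure} (giving $(m,m)$-purity for $m = \lceil \alpha\rceil - 1$) with the main theorem of \cite{Winter:Z}, which is precisely your argument. Your additional observation that finiteness of $\Div_*(F(A))$ rules out the elementary case is a sensible extra check (the paper leaves it implicit) and is correct, since $F(M_k)\cong\C$ has $\Div_m = \infty$ for all $m\ge 2$.
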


\noindent We conclude this paper with a result saying that $\cZ$-stability of an arbitrary unital separable \Cs{} $A$ is equivalent to a (sufficiently strong) divisibility property of $F(A)$.  It is well-known, as remarked in Theorem~\ref{thm:Z-absorption}, that $A$ is $\cZ$-stable if (and only if) there is a unital \sh{} from the dimension drop \Cs{} $I(2,3)$  into $F(A)$. It was shown in \cite[Proposition 5.1]{RorWin:Z} that there is a unital 
\sh{} from $I(2,3)$ into a unital \Cs{} $D$ \emph{with stable rank one} if (and only if) $\Div_2(D) \le 3$, i.e., if there exists $x \in \Cu(D)$ such that $2x \le \langle 1_D \rangle \le 3x$. However, in general, $F(A)$ does not have stable rank one. 

It is also shown in \cite[Proposition 5.1]{RorWin:Z} that there is a unital \sh{} from $I(2,3)$ into a unital \Cs{} $D$ if, for some $\ep>0$,  there exist pairwise orthogonal and equivalent positive contractions $a,b$ in $D$ such that $1-a-b \precsim (a-\ep)_+$. (This does not require that $D$ has stable rank one.) Using this fact we prove:

\begin{lemma} \label{lem:I(2,3)} Let $D$ be a unital \Cs{} and suppose that $D$ is $(2n,N)$-divisible and $D$ has $\alpha$-comparison, where $n$ and $N$ are positive integers and $\alpha$ a real number satisfying
$$2n < N < 3n, \qquad 1 \le \alpha < \frac{n}{N-2n}.$$ 
Then there is a unital \sh{} from $I(2,3)$ into $D$.
\end{lemma}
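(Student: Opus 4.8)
The plan is to verify the sufficient condition for a unital \sh{} $I(2,3)\to D$ recorded just above the lemma (the second criterion of \cite[Proposition 5.1]{RorWin:Z}): I must produce, for some $\ep>0$, pairwise orthogonal and equivalent positive contractions $a,b\in D$ with $1-a-b\precsim(a-\ep)_+$. First I would extract $a,b$ from the lower divisibility bound. Write $x=\langle c\rangle$ for a positive contraction $c\in D$ realizing the $(2n,N)$-divisibility, so that $c\otimes 1_{2n}\precsim 1_D$ and $1_D\precsim c\otimes 1_N$. Since $1_D$ is a projection we have $\langle 1\rangle\ll\langle 1\rangle$, and as $N\langle c\rangle=\sup_\eta N\langle(c-\eta)_+\rangle$, after replacing $c$ by a cut-down $(c-\eta)_+$ I may assume both bounds persist for the new $c$. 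From $c\otimes 1_{2n}\precsim 1_D$ the standard extraction of orthogonal copies (splitting a witnessing element into its $2n$ diagonal blocks) produces pairwise orthogonal, pairwise equivalent positive contractions $d_1,\dots,d_{2n}\in D$ with $d_j\sim c$ for all $j$. Put $a=d_1+\cdots+d_n$ and $b=d_{n+1}+\cdots+d_{2n}$. Orthogonality of the $d_j$ makes $a,b$ orthogonal positive contractions with $a+b\le 1$ and $\langle a\rangle=\langle b\rangle=nx$, and equivalence of the two blocks gives $a\sim b$; moreover $(a-\ep)_+=\sum_{j\le n}(d_j-\ep)_+$, so $\langle(a-\ep)_+\rangle=n\langle(d_1-\ep)_+\rangle$.

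The central step, and the main obstacle, is to bound the remainder $r:=1-a-b$ in the Cuntz semigroup by the \emph{unused} copies, namely to establish $\langle r\rangle\le(N-2n)\langle(c-\eta)_+\rangle$ for some $\eta>0$. This does not follow from the orthogonal sum alone: because $a+b$ need not be close to a projection, its complement $r$ may have large support and hence large Cuntz class, so the naive bound $\langle r\rangle\le\langle 1\rangle\le Nx$ is far too weak, and the "cancellation" bound $\langle r\rangle\le\langle 1\rangle-\langle a+b\rangle$ is simply false in a \Cs{} without cancellation. The way around this is to exploit the \emph{upper} divisibility bound $1_D\precsim c\otimes 1_N$: one must arrange the orthogonal equivalent pair so that $a+b$ fills up the unit up to a piece absorbed by the leftover $N-2n$ copies of $c$, i.e.\ so that $a+b$ is close to a projection, using the covering of $1_D$ by $N$ copies of $c$ together with the comparison hypothesis to reorganise the $N$ overlapping copies into $2n$ orthogonal equivalent near-projection copies plus a small remainder. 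This is precisely the kind of complemented-divisibility construction developed in \cite{RobRor:divisible}, and it is where the hypotheses $2n<N<3n$ (so that $0<N-2n<n$) enter; I expect this remainder estimate to be the technically hardest part of the argument.

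Finally I would close using $\alpha$-comparison and the arithmetic. Since $\alpha<\frac{n}{N-2n}$, I can choose positive integers $p,q$ with $\alpha<p/q\le\frac{n}{N-2n}$, so that $p>\alpha q$ and $p(N-2n)\le qn$. Taking $\ep\le\eta$ one has $\langle(a-\ep)_+\rangle=n\langle(d_1-\ep)_+\rangle\ge n\langle(c-\eta)_+\rangle$ because $d_1\sim c$, and therefore
$$p\langle r\rangle\le p(N-2n)\langle(c-\eta)_+\rangle\le qn\langle(c-\eta)_+\rangle\le q\langle(a-\ep)_+\rangle .$$
Since $p>\alpha q$, the $\alpha$-comparison property (Definition~\ref{def:alpha}) yields $\langle r\rangle\le\langle(a-\ep)_+\rangle$, that is, $1-a-b\precsim(a-\ep)_+$. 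Working with the fixed cut $(c-\eta)_+$ on both sides is what makes the comparison clean and removes any $\ep$-uniformity issue: the needed relation reduces to the integer inequality $p(N-2n)\le qn$. Applying the criterion of \cite[Proposition 5.1]{RorWin:Z} to the pair $(a,b)$ then produces the desired unital \sh{} $I(2,3)\to D$.
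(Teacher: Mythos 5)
There is a genuine gap, and it sits exactly where you flagged it: the ``central step'' $\langle 1-a-b\rangle \le (N-2n)\langle (c-\eta)_+\rangle$ is never proved, and it is not a step the hypotheses are strong enough to deliver. Your own diagnosis of the difficulty (no cancellation in $\Cu(D)$) is correct, but the proposed escape route --- rearranging the $N$ overlapping copies of $c$ into $2n$ orthogonal near-projections plus a small remainder, via a ``complemented divisibility'' construction --- is not available here: $D$ is an arbitrary unital \Cs{} with only $(2n,N)$-divisibility and $\alpha$-comparison, and any attempt to prove your intermediate bound by comparison runs out of room. Indeed, at the level of states $\rho$ on $\Cu(D)$ normalized at $\langle 1_D\rangle$ one only gets $\rho(\langle r\rangle)\le 1-2n\rho(z)\le (N-2n)\rho(z)$ with $z=\langle (c-\eta)_+\rangle$, and equality is possible when $\rho(z)=1/N$; so there is no strict slack left to feed into $\alpha$-comparison for this intermediate inequality when $\alpha>1$. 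Your plan in effect tries to spend the $\alpha$-comparison margin twice --- once to absorb the remainder into $N-2n$ copies of $c$, and once more in the final $p/q$ step --- but the arithmetic $\alpha< n/(N-2n)$ only buys one application.

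The paper's proof shows how to avoid the obstruction entirely: it never establishes any Cuntz-semigroup bound of the form $\langle r\rangle\le(N-2n)z$. Instead, after producing the orthogonal equivalent elements $e_1,\dots,e_{2n}$ with $N\langle (e_j-\delta)_+\rangle\ge\langle 1_D\rangle$ (your first step, which is fine), it sets $x=\langle(a_0-(\delta-\ep))_+\rangle$, $y=\langle c\rangle$ for the remainder $c=1-f_\ep(\sum e_j)$, and works with states on $\Cu(D)$, where additivity \emph{does} give the ``cancellation'' you correctly note is false in the semigroup: $2\rho(x)+\rho(y)\le 1$ and $\rho(x)\ge n/N$ yield $\alpha\rho(y)\le\alpha(1-2\rho(x))<\rho(x)$ for every normalized state $\rho$, using $\alpha<n/(N-2n)$ exactly once. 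The single application of $\alpha$-comparison then happens through \cite[Lemma 2.3]{KirRor:Central-sequence}, which converts the uniform strict state inequality directly into $y\le x$, i.e.\ $1-a-b\precsim(a-\eta)_+$, after which the criterion of \cite[Proposition 5.1]{RorWin:Z} applies as you intended. To repair your argument you should replace the unproved semigroup inequality by this functional argument; as written, the proposal does not constitute a proof.
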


\begin{proof} Follow the proof of "(i) $\Rightarrow$ (ii)" of \cite[Proposition 5.1]{RorWin:Z} to obtain pairwise equivalent and pairwise orthogonal positive elements $e_1,e_2, \dots, e_{2n}$ in $A$ such that $N\langle e_j \rangle \ge \langle 1_D \rangle$. Choose $\delta >0$ such that $N\langle (e_j-\delta)_+ \rangle \ge \langle 1_D \rangle$, and choose $0 < \ep < \delta$. Let $f_\ep \colon \R^+ \to [0,1]$ be as defined in \eqref{eq:fg}, cf.\  the proof of  \cite[Lemma 4.5]{RorWin:Z}. Put 
$$a_0 = (e_1 -\ep)_+ + (e_2-\ep)_+ +\cdots + (e_{n}-\ep)_+, \qquad b_0 = (e_{n+1} -\ep)_+ + (e_{n+2}-\ep)_+ +\cdots + (e_{2n}-\ep)_+,$$
$$c =  1_D - f_\ep(e_1+e_2+ \cdots + e_{2n}).$$
Then $a_0, b_0$, and $c_0$ are pairwise orthogonal, and
$$ \big(a_0-(\delta-\ep)\big)_+ = (e_1 -\delta)_+ + (e_2-\delta)_+ +\cdots + (e_{n}-\delta)_+.$$
Put $x = \langle (a_0-(\delta-\ep))_+ \rangle$ and $y = \langle c \rangle$ in $\Cu(D)$. If $\rho$ is a state on $\Cu(D)$ normalized at $u = \langle 1_D \rangle$, then 
$$n/N \le \rho(x) \le 1/2, \qquad \rho(2x+y) \le 1.$$
It therefore follows that
$$\alpha \rho(y) \le \alpha (1-2\rho(x)) < \rho(x)$$
for all states $\rho$ on $\Cu(D)$ normalized at $u$, and hence also for all states $\rho$ on $\Cu(D)$ normalized at $x$. (We have here used the relations satisfied by the numbers $n,N$ and $\alpha$.) Since $D$ has $\alpha$-comparison this implies that $y \le x$ in $\Cu(D)$, cf.\ \cite[Lemma 2.3]{KirRor:Central-sequence}. Hence $ c \precsim (a_0-(\delta-\ep))_+$.

Put
$$a= f_\ep(e_1) + f_\ep(e_2) + \cdots + f_\ep(e_n), \qquad b = f_\ep(e_{n+1}) + f_\ep(e_{n+2}) + \cdots + f_\ep(e_{2n}).$$
Then $a \sim b$, $a \perp b$, and $a$ is Cuntz equivalent to $e_1+e_2+ \cdots + e_n$. There is $\eta >0$ such that $(a_0-(\delta-\ep))_+ \precsim (a-\eta)_+$. Thus
$$1-a-b = 1_D - f_\ep(e_1+e_2+ \cdots + e_{2n}) = c \precsim (a_0-(\delta-\ep))_+ \precsim (a-\eta)_+.$$
The existence of a unital \sh{} from $I(2,3)$ into $D$ now follows from the implication "(ii) $\Rightarrow$ (iv)" of \cite[Proposition 5.1]{RorWin:Z}.
\end{proof}

\begin{lemma} \label{lem:I(2,3)-2} Let $D$ be a unital \Cs{} and suppose that $D$ is $\alpha$-divisible and has $\alpha$-comparison for some
$$\alpha < 1 + \sqrt{3}/2.$$
Then there is a unital \sh{} from $I(2,3)$ into $D$.
\end{lemma}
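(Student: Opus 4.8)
The plan is to reduce to the criterion already exploited in Lemma~\ref{lem:I(2,3)}: by the implication ``(ii)$\Rightarrow$(iv)'' of \cite[Proposition~5.1]{RorWin:Z} it suffices to produce $\eta>0$ together with pairwise orthogonal and equivalent positive contractions $a,b\in D$ satisfying $1_D-a-b\precsim(a-\eta)_+$. I would build $a$ and $b$ out of pieces supplied by $\alpha$-divisibility. Fixing a large integer $n$ and an integer $N$ with $N>2\alpha n$, the $\alpha$-divisibility of $D$ applied to $\langle 1_D\rangle$ with the pair $(2n,N)$ (legitimate since $N>2\alpha n$) yields $u\in\Cu(D)$ with $2n\langle u\rangle\le\langle 1_D\rangle\le N\langle u\rangle$, hence pairwise orthogonal equivalent positive elements $e_1,\dots,e_{2n}$ with $\langle e_j\rangle=\langle u\rangle$ and $N\langle e_j\rangle\ge\langle 1_D\rangle$. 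Setting $a=\sum_{j\le n}f_\ep(e_j)$ and $b=\sum_{j>n}f_\ep(e_j)$, with $f_\ep$ as in \eqref{eq:fg}, gives $a\sim b$, orthogonal, and leaves the remainder $c=1_D-a-b$.

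The heart of the argument is to show $c\precsim(a-\eta)_+$. Exactly as in the proof of Lemma~\ref{lem:I(2,3)} I would pass to the functionals: writing $x=\langle a\rangle$ and $y=\langle c\rangle$, the relations above force, for every state $\rho$ on $\Cu(D)$ normalized at $\langle 1_D\rangle$, an estimate of the shape $\rho(y)\le 1-2\rho(x)$ with $\rho(x)\ge n/N$, whence $\alpha\rho(y)<\rho(x)$; then $\alpha$-comparison, in the form of \cite[Lemma~2.3]{KirRor:Central-sequence}, upgrades this to $y\le x$ and, after shrinking, to $c\precsim(a-\eta)_+$. With only the $2n$ equal pieces the worst case $\rho(x)=n/N\approx 1/(2\alpha)$ forces $2\alpha^2-2\alpha-1<0$, i.e.\ the weaker threshold $\alpha<\tfrac{1+\sqrt3}{2}$. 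To reach the stated constant I would improve the filling: apply $\alpha$-divisibility a second time to the remainder, inside $\overline{cDc}$ so that the new pieces are orthogonal to the $e_j$, and feed the resulting smaller pieces evenly into $a$ and $b$; this replaces the leftover fraction by (essentially) its square and enlarges $\rho(x)$. A computation balancing the two scales against the comparison inequality shows the relevant system is solvable exactly when $(2(\alpha-1))^2<3$, that is $\alpha<1+\sqrt3/2$.

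The main obstacle is precisely this comparison step. Because $\Cu(D)$ has no cancellation, the favorable trace estimate on the remainder $c$ does not by itself produce a Cuntz subequivalence; it must be converted into the honest relation $c\precsim(a-\eta)_+$ through the integer form of $\alpha$-comparison, and this has to be arranged while simultaneously (i) realizing the second-scale pieces genuinely orthogonally to the first-scale pieces, which is handled by the standard $(\,\cdot\,-\ep)_+$ approximation inside $\overline{cDc}$, (ii) preserving $a\sim b$ by splitting each scale evenly, and (iii) controlling the perturbation parameters $\ep,\delta,\eta$. Tuning the two scales so that the resulting integer inequalities admit a solution for every $\alpha<1+\sqrt3/2$ is the delicate point, and it is what pins down the sharp constant.
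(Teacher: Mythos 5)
Your first stage --- choosing $n,N$ with $N>2\alpha n$, invoking $\alpha$-divisibility to get $(2n,N)$-divisibility, splitting the $2n$ pieces into $a$ and $b$, and converting the state estimate $\alpha\rho(y)\le\alpha(1-2\rho(x))<\rho(x)$ into $c\precsim(a-\eta)_+$ via $\alpha$-comparison --- is exactly the paper's proof: the paper simply picks integers $n,N$ with $2\alpha n<N$ and $\alpha<\frac{n}{N-2n}$ and quotes Lemma~\ref{lem:I(2,3)}, whose proof is the functional argument you describe. Your arithmetic diagnosis is also correct: the two constraints $2\alpha n<N<2n+n/\alpha$ are simultaneously satisfiable by integers if and only if $2\alpha<2+1/\alpha$, i.e.\ $2\alpha^2-2\alpha-1<0$, i.e.\ $\alpha<\frac{1+\sqrt{3}}{2}\approx 1.366$. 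For $\alpha=1.5$, say, no such $n,N$ exist, so the paper's own two-line reduction does not reach the displayed constant $1+\sqrt{3}/2\approx 1.866$; that constant is in all likelihood a misprint for $\frac{1+\sqrt{3}}{2}$, the positive root of $2t^2-2t-1$. For that corrected constant, your first paragraph together with the single-scale functional estimate is a complete proof and coincides with the paper's.

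The gap in your proposal concerns the literal constant $1+\sqrt{3}/2$, which you attempt to reach by a second application of $\alpha$-divisibility inside $\overline{cDc}$. That step is only sketched, and the decisive inequality $(2(\alpha-1))^2<3$ is asserted rather than derived. If one actually books the two-scale worst case --- $\rho(x)=\frac{1}{2\alpha}$ and $\rho(y)=1-\frac{1}{\alpha}$ after the first scale, the second scale removing a fraction $\frac{1}{\alpha}$ of the remainder and contributing half of that to $a$ --- the requirement $\alpha\rho(y')<\rho(x')$ becomes $2\alpha(\alpha-1)^2<2\alpha-1$, whose threshold is the root of $2\beta^3+2\beta^2-2\beta-1=0$ with $\beta=\alpha-1$, i.e.\ $\alpha\approx 1.855$, which is strictly below $1+\sqrt{3}/2$. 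So two scales do not deliver the claimed constant, and iterating further raises nontrivial bookkeeping issues (compounding $\ep$-cutoffs, preserving $a\sim b$ and orthogonality across scales, and the fact that the comparison must ultimately be fed through the integer form of $\alpha$-comparison rather than through state inequalities alone). In summary: what you prove rigorously is the lemma for $\alpha<\frac{1+\sqrt{3}}{2}$, which is precisely what the paper's proof establishes; neither your refinement as written nor the paper's argument yields the statement at the constant $1+\sqrt{3}/2$.
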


\begin{proof} By the choice of $\alpha$ there exist positive integers $n,N$ such that 
$$2\alpha n < N, \qquad 2n < N, \qquad \alpha < \frac{n}{N-2n}.$$ 
As $D$ is $\alpha$-divisible, the first inequality implies that $D$ is $(2n,N)$-divisible. The claim now follows from Lemma~\ref{lem:I(2,3)}. 
\end{proof}

\noindent We can now express $\cZ$-stability of an arbitrary separable unital \Cs{} in terms of a divisibility property of its central sequence algebra:

\begin{prop} The following three conditions are equivalent for every unital separable \Cs{} $A$:
\begin{enumerate}
\item $A \cong A \otimes \cZ$. \vspace{.1cm}
\item $\Div_*(F(A)) \le 1$. \vspace{.1cm}
\item $\Div_*(F(A)) < 1+\frac{\sqrt{3}}{2}$.
\end{enumerate}
\end{prop}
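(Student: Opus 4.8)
The plan is to establish the three equivalences as a cycle $(i) \Rightarrow (ii) \Rightarrow (iii) \Rightarrow (i)$. The step $(ii) \Rightarrow (iii)$ is immediate, since $1 < 1 + \sqrt{3}/2$, so the work is concentrated in the other two implications; and in fact almost all of the substance is already contained in Proposition~\ref{prop:alpha-pure} and Lemma~\ref{lem:I(2,3)-2}, so the proof is largely a matter of wiring the earlier results together.

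For $(i) \Rightarrow (ii)$ I would first invoke Theorem~\ref{thm:Z-absorption} to obtain a unital embedding $\iota \colon \cZ \to F(A)$. The induced map $\Cu(\iota) \colon \Cu(\cZ) \to \Cu(F(A))$ preserves order and addition and sends $\langle 1_\cZ \rangle$ to $\langle 1_{F(A)} \rangle$, so any $x \in \Cu(\cZ)$ witnessing $(m,n)$-divisibility of $\cZ$ is carried to a witness in $F(A)$; hence $\Div_m(F(A)) \le \Div_m(\cZ)$ for every $m$, and it suffices to prove $\Div_m(\cZ) \le m+1$, since then $\Div_*(F(A)) \le \Div_*(\cZ) \le \liminf_m (m+1)/m = 1$. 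To bound $\Div_m(\cZ)$ I would use that $\cZ$ has a unique tracial state $\tau$ and strict comparison of positive elements: choosing a positive $a \in \cZ \otimes \cK$ with $1/(m+1) < d_\tau(a) < 1/m$ gives $m\langle a\rangle \le \langle 1\rangle$ (as $d_\tau(ma) < 1$) and $\langle 1\rangle \le (m+1)\langle a\rangle$ (as $d_\tau((m+1)a) > 1$), whence $\Div_m(\cZ) \le m+1$.

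For $(iii) \Rightarrow (i)$ I would set $\alpha = \Div_*(F(A))$, which is finite by hypothesis, and apply Proposition~\ref{prop:alpha-pure} to conclude that $F(A)$ is $\alpha$-divisible and has $\alpha$-comparison. Since $\alpha < 1 + \sqrt{3}/2$, Lemma~\ref{lem:I(2,3)-2} then produces a unital \sh{} $\varphi \colon I(2,3) \to F(A)$. The image of $\varphi$ is a nonzero quotient of $I(2,3)$, hence a separable subhomogeneous \Cs{} without characters (every irreducible representation of $I(2,3)$ has dimension $2$, $3$ or $6$), so Theorem~\ref{thm:Z-absorption}, in the form recalled just before the present statement, gives $A \cong A \otimes \cZ$.

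Because the heavy lifting is done by the earlier results, the main point is simply to assemble them; the delicate numerical threshold $1 + \sqrt{3}/2$ enters only through Lemma~\ref{lem:I(2,3)-2}. The one spot demanding genuine (though routine) care is the estimate $\Div_m(\cZ) \le m+1$ in $(i) \Rightarrow (ii)$: one must respect the boundary behaviour of strict comparison, which is precisely why I would keep $d_\tau(a)$ strictly below $1/m$ rather than equal to it.
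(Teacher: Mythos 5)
Your proposal is correct and follows essentially the same route as the paper: (i)$\Rightarrow$(ii) via a unital embedding $\cZ \to F(A)$ and $\Div_*(\cZ)=1$, and (iii)$\Rightarrow$(i) via Proposition~\ref{prop:alpha-pure}, Lemma~\ref{lem:I(2,3)-2} and Theorem~\ref{thm:Z-absorption}. The only difference is that you supply the (correct) strict-comparison argument for $\Div_m(\cZ)\le m+1$, which the paper simply quotes as $\Div_*(\cZ)=1$.
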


\begin{proof} (i) $\Rightarrow$ (ii). If (i) holds, then $\cZ$ embeds unitally into $F(A)$, so
$$\Div_*(F(A)) \le \Div_*(\cZ) = 1.$$

(ii) $\Rightarrow$ (iii) is trivial. (iii) $\Rightarrow$ (i). If $\alpha := \Div_*(F(A)) < 1+\frac{\sqrt{3}}{2}$, then $F(A)$ is $\alpha$-divisible and has $\alpha$-comparison by Proposition~\ref{prop:alpha-pure}. Hence there is a unital \sh{} from $I(2,3)$ into $F(A)$ by Lemma~\ref{lem:I(2,3)-2}. This implies that (i) holds, cf.\ Theorem~\ref{thm:Z-absorption}. 
\end{proof}

{\small{
\bibliographystyle{amsplain}

\begin{thebibliography}{10}

\bibitem{AkeOst:F_2-tensor}
C.~A. Akemann and P.~A. Ostrand, \emph{{On a tensor product {$C^*$}-algebra
  associated with the free group on two generators}}, J. Math. Soc. Japan
  \textbf{27} (1975), no.~4, 589--599.

\bibitem{HiKir:abelian}
H.~Ando and E.~Kirchberg, \emph{{Non-commutativity of the central sequence
  algebra of separable non-type I $C^*$-algebras}}, preprint, 2014.

\bibitem{DadToms:Z}
M.~D\u{a}d\u{a}rlat and A.~Toms, \emph{{$\mathcal{Z}$-stability and infinite
  tensor powers of $C^*$-algebras}}, Adv. Math. \textbf{220} (2009), no.~2,
  341--366.

\bibitem{EllKuc:Voiculescu}
G.~A. Elliott and D.~Kucerovsky, \emph{{An abstract
  {V}oiculescu-{B}rown-{D}ouglas-{F}illmore absorption theorem}}, Pacific J.
  Math. \textbf{198} (2001), no.~2, 385--409.

\bibitem{FaHaSh:model}
I.~Farah, B.~Hart, and D.~Sherman, \emph{{Model theory of operator algebras I:
  stability}}, Bulletin of the London Mathematical Society \textbf{45} (2013),
  no.~4, 825--838.

\bibitem{GeHadwin:CH}
L.~Ge and D.~Hadwin, \emph{Ultraproducts of {$C^*$}-algebras}, Recent advances
  in operator theory and related topics ({S}zeged, 1999), Oper. Theory Adv.
  Appl., vol. 127, Birkh\"auser, Basel, 2001, pp.~305--326.

\bibitem{GongJiangSu:Z}
G.~Gong, X.~Jiang, and H.~Su, \emph{{Obstructions to $\mathcal{Z}$-stability
  for unital simple $C^*$-algebras}}, Canadian Math. Bull. \textbf{43} (2000),
  no.~4, 418--426.

\bibitem{Hus:fibre}
D.~Husemoller, \emph{{Fibre Bundles}}, 3rd. ed., Graduate Texts in Mathematics,
  no.~20, Springer Verlag, New York, 1966, 1994.

\bibitem{JiangSu:Z}
X.~Jiang and H.~Su, \emph{On a simple unital projectionless {$C^*$}-algebra},
  Amer. J. Math. \textbf{121} (1999), no.~2, 359--413.

\bibitem{Kir:abel}
E.~Kirchberg, \emph{Central sequences in {$C^*$-algebras} and strongly purely
  infinite algebras}, Operator Algebras (Springer - Berlin) (C.~Skau
  O.~Bratteli, S.~Neshveyev, ed.), Abel Symp., vol.~1, Springer, 2006,
  pp.~175--232.

\bibitem{KirPhi:classI}
E.~Kirchberg and N.~C. Phillips, \emph{{Embedding of exact $C^*$-algebras into
  ${\mathcal{O}}_2$}}, J. Reine Angew. Math. \textbf{525} (2000), 17--53.

\bibitem{KirRor:Central-sequence}
E.~Kirchberg and M.~R{\o}rdam, \emph{{Central sequences $C^*$-algebras and
  tensorial absorption of the Jiang-Su algebra}}, J. Reine Angew. Math., to
  appear, 2012.

\bibitem{KucNg:CFP}
D.~Kucerovsky and P.~W. Ng, \emph{{$S$-regularity and Corona Factorization
  Property}}, Math. Scand. \textbf{99} (2006), 2213--2215.

\bibitem{KucNg:JOT}
\bysame, \emph{A simple {$C^\ast$}-algebra with perforation and the corona
  factorization property}, J. Operator Theory \textbf{61} (2009), no.~2,
  227--238.

\bibitem{MatuiSato:Z}
H.~Matui and Y.~Sato, \emph{Strict comparison and {$\mathcal{Z}$}-absorption of
  nuclear {$C^*$}-algebras}, Acta Math. \textbf{209} (2012), no.~1, 179--196.

\bibitem{MatuiSato:UHF}
\bysame, \emph{{Decomposition rank of UHF-absorbing $C^*$-algebras}}, preprint,
  2013.

\bibitem{OPR:CFP-RR0}
E.~Ortega, F.~Perera, and M.~R\o rdam, \emph{{The Corona Factorization property
  and refinement monoids.}}, Transactions Amer. Math. Soc \textbf{363} (2011),
  no.~9, 4505--4525.

\bibitem{OPR:Cuntz}
\bysame, \emph{{The Corona Factorization property, Stability, and the Cuntz
  semigroup of a C*-algebra.}}, Int. Math. Res. Not. IMRN \textbf{2012} (2012),
  no.~1, 34--66.

\bibitem{Ozawa:Dixmier}
N.~Ozawa, \emph{{Dixmier approximation and symmetric amenability for {$\rm
  C^*$}-algebras}}, J. Math. Sci. Univ. Tokyo \textbf{20} (2013), no.~3,
  349--374.

\bibitem{Robert:commutators}
L.~Robert, \emph{{Nuclear dimension and sums of commutators}}, preprint, 2013.

\bibitem{RobRor:divisible}
L.~Robert and M.~R{\o}rdam, \emph{{Divisibility Properties for
  $C^*$-algebras}}, Proc. London Math. Soc., to appear, 2011.

\bibitem{Ror:sns}
M.~R{\o}rdam, \emph{{Stability of $C^*$-algebras is not a stable property}},
  Documenta Math. \textbf{2} (1997), 375--386.

\bibitem{Ror:Encyc}
\bysame, \emph{{Classification of Nuclear, Simple $C^*$-algebras}},
  {Classification of Nuclear $C^*$-Algebras. Entropy in Operator Algebras}
  (J.~Cuntz and V.~Jones, eds.), vol. 126, Encyclopaedia of Mathematical
  Sciences. Subseries: Operator Algebras and Non-commutative Geometry, no. VII,
  Springer Verlag, Berlin, Heidelberg, 2001, pp.~1--145.

\bibitem{Ror:simple}
\bysame, \emph{{A simple $C^*$-algebra with a finite and an infinite
  projection}}, Acta Math. \textbf{191} (2003), 109--142.

\bibitem{RorWin:Z}
M.~R{\o}rdam and W.~Winter, \emph{{The Jiang-Su algebra revisited}}, J. Reine
  Angew. Math \textbf{642} (2010), 129--155.

\bibitem{Sato:Jiang-Su}
Y.~Sato, \emph{The {R}ohlin property for automorphisms of the {J}iang-{S}u
  algebra}, J. Funct. Anal. \textbf{259} (2010), no.~2, 453--476.

\bibitem{Winter:Z}
W.~Winter, \emph{Nuclear dimension and {$\mathcal{Z}$}-stability of pure
  {$C^*$}-algebras}, Invent. Math. \textbf{187} (2012), no.~2, 259--342.

\end{thebibliography}
\providecommand{\bysame}{\leavevmode\hbox to3em{\hrulefill}\thinspace}
\providecommand{\MR}{\relax\ifhmode\unskip\space\fi MR }
\providecommand{\MRhref}[2]{%
  \href{http://www.ams.org/mathscinet-getitem?mr=#1}{#2}
}
\providecommand{\href}[2]{#2}

}}

\bigskip \bigskip
\noindent
\address{Institut f{\"u}r Mathematik,
Humboldt Universit{\"a}t zu Berlin, Unter den Linden 6,\\
D--10099 Berlin, Germany}\\
\email{kirchbrg@mathematik.hu-berlin.de}

\bigskip

\noindent
\address{Department of Mathematics, University of Copenhagen\\
Universitetsparken 5, 2100 Copenhagen, Denmark}\\
\email{rordam@math.ku.dk}

\end{document}